\newcommand{\lineplanningproblem}{Real-Time Line Planning Problem}
\newcommand{\lpp}{\textsc{Rlpp}}
\newcommand{\platform}{platform}
\newcommand{\tcar}{t^{\text{car}}}
\newcommand{\bx}{\mathbf{x}}
\newcommand{\by}{\mathbf{y}}
\begin{document}~
\title{Real-Time Approximate Routing for Smart Transit Systems}
    \author{Siddhartha Banerjee}
    \email{sbanerjee@cornell.edu}
    \affiliation{%
    \institution{Cornell University}}
    \author{Chamsi Hssaine}
    \email{ch822@cornell.edu}
    \affiliation{%
    \institution{Cornell University}}
	\author{No\'emie P\'erivier}
    \email{np2708@columbia.edu}
    \affiliation{%
    \institution{Columbia University}}
    \author{Samitha Samaranayake}
    \email{samitha@cornell.edu}
    \affiliation{%
    \institution{Cornell University}}

\begin{abstract}
We study real-time routing policies in smart transit systems, where the platform has a combination of cars and high-capacity vehicles (e.g., buses or shuttles) and seeks to serve a set of incoming trip requests. The platform can use its fleet of cars as a feeder to connect passengers to its high-capacity fleet, which operates on fixed routes. Our goal is to find the optimal set of (bus) routes and corresponding frequencies to maximize the social welfare of the system in a given time window. This generalizes the {\it Line Planning Problem}, a widely studied topic in the transportation literature, for which existing solutions are either heuristic (with no performance guarantees), or require extensive computation time (and hence are impractical for real-time use). To this end, we develop a $1-\frac1e-\varepsilon$ approximation algorithm for the {\it Real-Time Line Planning Problem}, using ideas from randomized rounding and the Generalized Assignment Problem. Our guarantee holds under two assumptions: $(i)$ no inter-bus transfers and $(ii)$ access to a pre-specified set of feasible bus lines. We moreover show that these two assumptions are crucial by proving that, if either assumption is relaxed, the \lineplanningproblem{} does not admit any constant-factor approximation. Finally, we demonstrate the practicality of our algorithm via numerical experiments on real-world and synthetic datasets, in which we show that, given a fixed time budget, our algorithm outperforms Integer Linear Programming-based exact methods.
\end{abstract}
\maketitle

\section{Introduction}

{In the past decade, the advent of ride-hailing platforms such as Lyft and Uber has revolutionized urban mobility. While commuter transit needs in cities were traditionally satisfied by personal vehicles or mass transit systems, ride-hailing platforms have grown immensely in popularity and gained a seemingly permanent footing in the landscape of mobility solutions. 
However, despite the increasingly important role played by \emph{Mobility-on-Demand (MoD)} services in today's society, the intermingling of various modes of transportation has yet to make its way into the status quo: by and large, if not for using their personal vehicles, commuters either choose to complete their trips in a low-capacity ride-hailing vehicle, or opt for public mass transit options, each of these options equipped with their respective benefits and disadvantages. On the one hand, ride-hailing services have been lauded for their convenience, competitive pricing, and the creation of flexible, gig economy jobs. On the other, these services have been associated with negative environmental impacts, chief of which are increased emissions due to higher volumes of traffic congestion and vehicle-miles traveled. Moreover, despite the fact that these options are often less expensive than taxi services, they remain out of reach for lower-income populations, for whom mass transit such as bus and subway services remains the most accessible option. And, while these public transit systems are more affordable and environmentally sustainable, they fail to adequately serve areas that are not as densely populated. Further, due to their inability to dynamically adapt to passenger demand, public transit vehicles are often overly packed during rush hour and significantly underfilled in off-peak hours~\citep{nyc_packed_subways}, an inefficiency from which ride-hailing options do not suffer.}

In light of this, it should be clear that there exist potentially massive gains from integrating the on-demand capabilities of ride-hailing services with mass transit options to create a smarter transportation system.  The benefits of such a synergy have been uncovered in both the academic literature~\citep{benefit_integration}, as well as in the wild, with ride-hailing platforms such as Lyft experimenting with mass transit-like options in recent years~\citep{hawkins2017lyft}.  Indeed, the need for such integration has become all the more stark throughout 2020, when cities have turned to microtransit as a means of addressing reduced public transit services due to the coronavirus pandemic~\citep{pandemic_microtransit}. The value of real-time, adaptive hybrid transportation options that retain both the convenience of ride-hailing and the sustainability of mass transit, is perhaps best evidenced by New York City's months-long overnight, for-hire vehicle program for essential workers, discontinued in August 2020 due to high costs~\citep{nyc_pandemic_buses}. {The extremes of the mobility spectrum to which the Metropolitan Transit Authority (MTA) turned as a stopgap in this relatively short period of time typifies the potential perils of relying on an unintegrated system: the free, late-night for-hire vehicle program was a boon to essential workers who had been deprived of a means to get to their shifts, but the city could not sustain this as a long-term solution; mass transit solutions, though sustainable, were not flexible enough to appropriately serve workers living in communities historically underserved by these services~\citep{nyc_underserved_comms}. As an alternative to these two extremes, the city recently turned to the creation of overnight bus routes that mirror workers' most popular trips~\citep{nyc_pandemic_buses}. In doing so, the MTA is faced with a number of fundamental questions upon which the success of such a system hinges: {given these essential workers' origins and destinations, {\it which} routes should the transit agency operate? {\it How frequently} should it operate each route? How can {\it short, for-hire vehicle trips} help to connect passengers to these routes?}} This paper aims to answer these questions in order to effectively operate such an integrated system.

Just as cities have yet to successfully operate integrated mobility services, the operations research and transportation communities have by and large studied ride-hailing and mass transit systems separately. On the one hand, there exists an active line of work on approximate-optimal policies for dispatching drivers to ride requests, and rebalancing empty vehicles~\citep{banerjee2016pricing,braverman2019empty,banerjee2018state,kanoria2019near}. 
On the other, the problem of designing the optimal bus routes to serve passenger demand
dates back to the mid-1970s~\citep{magnanti1984network}. And, though the question of integrating mass transit and single-occupancy vehicle solutions has attracted increasing attention in recent years, operational questions have largely been restricted to using ride-hailing services to connect to {\it pre-existing} transit networks~\citep{Ma,MA2019417}. The joint problem of adaptively designing bus routes in near real-time, and connecting passengers to these routes via ride-hailing services has to our knowledge yet to be explored.

The key obstacle in designing real-time algorithms with provable guarantees for transit-network design is the size of the decision space: the number of possible routes is exponential in the number of nodes of the road network. As such, approaches have either been heuristic~\citep{CEDER,Pape,Borndorfer} (lacking any guarantees), or exact~\citep{nachtigall2008simultaneous} (requiring extensive computation time); the former may lead to severe losses in efficiency, while the latter are more properly suited for designing {\it long-term} bus routes, rather than routes that adapt to changing demand patterns.

In this paper, we show that it is possible to design efficient algorithms for line planning that both provide passengers with the experience of near-real-time booking and service {\it and} have theoretical guarantees. 
However, this is only true up to a point: as the designer expands her solution space of feasible transit options, one runs into fundamental limits in terms of how good an approximation one can hope to achieve via efficient algorithms. Overall, our work provides \emph{theoretically sound and practically meaningful algorithms for real-time line planning, and also exposes the computational limits of line planning.}

\subsection{Summary of our contributions}

We consider a model in which a Mobility-on-Demand provider (henceforth \emph{platform}) has control of a vehicle fleet comprising both single-occupancy and high-capacity vehicles (henceforth \emph{cars} and \emph{buses} respectively). 
The platform is faced with a number of trip requests to fill during a window of time (e.g., one hour), and has full knowledge of passenger demands (source and destination locations, and constraints on start and end times) prior to the beginning of the time window. 
This assumption is practically motivated by scheduling services now offered by ride-hailing apps like Lyft and Uber, and/or the use of accurate demand forecasting models. 
The \platform{} can service these trip requests via different \emph{trip options}: it can send a car to transport the passenger from her source to her destination; it can use a car for the first and last legs of the passenger's trip, and have her travel by bus in between; or it can use more complicated trips comprising of multiple car and bus legs.

Each passenger matched to a trip option leads to an associated value (or \emph{reward}), which can reflect both the passenger's utility for the trip-time, comfort, transfers, etc., as well as platform costs in terms of car-miles; in addition, the \platform{} also incurs a cost for operating each bus route at a given frequency. We define the combination of a route and a frequency to be a {\it line}. The goal of the platform is to determine the optimal set of lines to operate (given a fixed budget $B$ for opening lines), as well as the assignment of passengers to trip options utilizing these lines, in order to maximize the total reward. 
We refer to this problem as the {\it \lineplanningproblem} (\lpp).
 
As discussed earlier, though there exist exact methods for solving the Line Planning Problem that can be adapted to the \lpp\ setting (e.g., by formulating and solving an associated integer linear program),
the extensive computation time required to obtain the optimal set of lines runs counter to our goal of computing short-term lines that adapt to demand patterns throughout the day. 
This motivates studying the task of finding good approximate solutions to \lpp. In this context, we make two contributions:
\begin{itemize}[leftmargin=*]
\item[1.] We first demonstrate the computational limits of \lpp\ by showing that no constant-factor approximation is possible if we relax any one of two assumptions: $(i)$ access to a pre-specified set of feasible bus lines, and $(ii)$ no inter-line (i.e., bus-to-bus) transfers.
\item[2.] Under both above assumptions, we design an efficient algorithm for \lpp\, that respects budget constraints with high probability, while guaranteeing a welfare that is within a $\left(1-\frac1e-\varepsilon\right)$-factor of the optimal (where $\varepsilon$ trades-off the quality of approximation and probability of exceeding the budget).
\end{itemize}
While assumptions $(i)$ and $(ii)$ are commonly made both in practice and in the academic literature, our work provides \emph{strong theoretical justifications} for these assumptions in that if either fails to hold, there is no hope of obtaining a constant-factor approximation.
Assumption $(i)$ forms the basis of all {exact} ILP-based methods; it is also practically relevant due to both constraints imposed by cities on bus routes, as well as expert knowledge of transit designers as to which routes are useful. Assumption $(ii)$ reflects a practical constraint that, given a passenger may already incur car-bus transfers  in the first/last legs of her trip, additional bus-bus transfers could be deemed excessive. 
Even when both hold, however, we show that the problem is still far from trivial: in particular, it does not inherit the attractive combinatorial property of submodularity, and so one cannot employ standard techniques to get the classical $1-\frac1e$ approximation guarantee~\citep{Wolsey}. 
Moreover, we also show that the natural linear programming (LP) relaxation has a worst-case integrality gap of at least $\frac12$. 

In spite of this, in our main technical contribution, we provide a $\left(1-\frac1e-\varepsilon\right)$-factor approximation for \lineplanningproblem{}. 
More specifically, our algorithm uses a novel LP relaxation followed by a randomized rounding procedure, that can be tuned to guarantee that the budget constraint is met with any desired high-probability bound, while losing an $\varepsilon$-factor in the welfare guarantee.
Our key technical insight is that the \lineplanningproblem{} can be relaxed and re-formulated as an exponential-size \emph{configuration LP}, and that this formulation then allows us to use ideas from randomized rounding for the Separable Assignment Problem~\citep{SAP}. We then leverage the additional structure in \lpp{} to show that the rounding step is the {\it only} source of loss in our algorithm. Our results hold under an assumption which we term {\it trip optimality} (i.e., of all the ways in which a passenger can join a given line via car, she must be assigned to the best such option). However, we later show how this assumption can be relaxed, and, with slight modification to our algorithm, we lose at most a constant factor. 

Finally, we investigate the practical efficacy of our approach via numerical experiments on real-world and synthetic datasets. We note that, although our algorithm does not guarantee a solution that is always within budget, in practice it is easy to run multiple replications (which are cheap, and can be run in parallel) and choose the best realization satisfying budget constraints. 
Our numerical experiments simulate this procedure, and we observe that given a time budget on computation (as would be necessary for real-time line planning), our algorithm outperforms state-of-the-art ILP solvers for large problem instances, thereby demonstrating its practicality for the problem of designing integrated and flexible transit networks at scale.

\noindent\textbf{Structure of the paper}.
In Section~\ref{sec:related}, we survey relevant literature. We present our model and define the \lineplanningproblem{} in Section~\ref{sec:preliminaries}. 
In Section~\ref{sec:hardness}, we characterize fundamental computational limits of \lpp, establishing the need for a candidate set of lines and precluding bus transfers; we also show that standard techniques are inadequate for our setting. We present our main algorithm and guarantees in Section~\ref{sec:main-result}, and back this up with numerical results in Section~\ref{sec:numerical-experiments} and Appendix~\ref{app:synthetic_experiments}. 
Extensions to our main results can be found in Appendix~\ref{ssec:extensions}. 

\section{Related work}
\label{sec:related}


\noindent\textbf{Line planning in public transportation:} {Our work falls under the large umbrella of transportation network design; see~\citet{magnanti1984network,guihaire2008transit,farahani2013review} for excellent expositions.} Much of this work has historically involved heuristics, including greedy approaches based on simpler network primitives such as shortest-paths and minimum spanning trees~\citep{Dubois,gattermann2017line}, and metaheuristics~\citep{zhao2006simulated,zhao2004transit}. 
The largest-scale use of heuristic methods is, to our knowledge, the work of~\citet{Borndorfer}, who rely on column generation and {greedy heuristics}; more importantly, the formulation requires allowing for \emph{arbitrarily many bus transfers}. 
In practice, it is desirable to enforce a maximum number of allowable transfers (something which we explicitly model in our work); enforcing this however severely impacts computational performance. 
In a followup work,~\citet{borndorfer2012direct} incorporate transfer penalties (a type of ``soft'' constraint), but the resulting algorithms require on the order of 10 hours of computation time, which for our setting is infeasible. 
More recently, exact methods based on ILP formulations have gained in popularity~\citep{wan2003mixed,barra2007solving,marin2009urban,nachtigall2008simultaneous}, though these only scale to small networks.

\noindent\textbf{Ride-pooling:} Our problem is also closely related to {\it ride-pooling}, where the goal is to combine multiple trips to improve the efficiency of ride-sharing platforms. To model trade-offs between passenger inconvenience and sharing rides,~\citet{Santi} introduced the abstraction of a {\it shareability network}, and showed via simulations that pairing up to two requests per vehicle could lead to significant savings in cumulative driver miles. Their methods, however, accommodate at most three passengers per vehicle (with heuristics). \citet{Alonso-Mora} develop algorithms which perform well (in simulations) for up to 10 passengers per vehicle. Their method is based on clique decompositions of the shareablity network, which again scales poorly with increasing vehicle capacity; it also imposes strict quality of service constraints leading to fewer feasible trip configurations, which may greatly reduce efficiency in the setting we consider.

\noindent\textbf{Multi-modal solutions to the first-mile/last-mile problem:} From a practical perspective, the transportation community has explored public-private partnerships to exploit both the high capacity of public transit buses and the flexibility of MoD fleets~\citep{benefit_integration,MA2019417}. These works, however, focus not on designing the transit network, but rather on dynamic vehicle dispatching and routing between origin or destination and transit hubs.

\noindent\textbf{Stochastic control for ride-sharing:} A more recent line of work has developed stochastic models for ride-sharing with trip requests arriving via a random process. This has enabled the use of techniques from stochastic control for scheduling and routing~\citep{banerjee2016pricing,braverman2019empty,banerjee2018state,kanoria2019near}, as well as the study of system-level questions such as the effect of competing platforms~\cite{sejourne2018price}.
The algorithms developed in these papers largely rely on assuming that under appropriate scaling (in particular, in the `large-market' scaling, where the number of cars scales with the demand), the system is well approximated via a steady-state problem. This is practically meaningful in ride-sharing systems, which can be thought of as being near-stationary over sufficiently small time-scales; such an assumption, however, critically depends on the impact of a single car being ``small'' relative to the rest of the system. In a setting with high-capacity vehicles, however, this ceases to be true, and it is unclear if a stochastic model of our system would exhibit the rapid mixing property with which low-capacity ride-sharing models are endowed, and which allows for these attractive guarantees.

\noindent\textbf{Randomized rounding for resource allocation problems:} Our methodological approach is inspired by the use of {\it configuration programs} for improved approximations for a number of  combinatorial optimization problems~\citep{MBA,LP_conf_scheduling,SAP}. At a high level, the approximation algorithms proposed in this line of work reformulate the resource allocation problem as an exponential-size integer program that optimizes over all feasible sets of resources; the LP relaxation of this program can be (approximately) solved in polynomial time, and used to produce approximately optimal solutions to the original problem via rounding. Our main result relies on the randomized rounding scheme proposed by~\citet{SAP} for the Separable Assignment Problem, which comprises a set of bins and items, with a separate packing constraint on each bin, and rewards for each item-bin pair. The objective is to pack items into bins such that the aggregate value of all packed items is maximized. The analogy to the \lineplanningproblem{} is natural: items correspond to passengers, bins correspond to lines, and the packing constraints correspond to capacity constraints for each bus. The key difference between these two problems is that, in the case of \textsc{Sap}, \emph{bins are provided in advance}, with no associated cost for using a bin. In contrast, the main difficulty in \lpp{} is in determining which lines to open, given costs for opening each line, and a budget constraint which further couples all lines (bins) together.

\section{Preliminaries}
\label{sec:preliminaries}

\subsection{System Model}

We model the transit network as an undirected weighted graph $G=(V,E)$, with $|V|=n$ potential origin/destination nodes, edges representing roads between these nodes, and edge weights $(\tau_e)_{e\in E}$ representing the \emph{cost} (for example, travel time) required to traverse an edge. 
We assume that $\tau_e \geq \tau_{\min}$ for some constant $\tau_{\min} > 0$. 

The network is operated by a single Mobility-on-Demand provider (henceforth \emph{platform}), which employs a fleet comprising two types of vehicles: single-occupancy vehicles (\emph{cars}), and high-capacity vehicles (\emph{buses}). The platform makes all scheduling and routing decisions in a centralized manner. These decisions are made over a fixed time-window, wherein prior to the beginning of the window, the platform receives a set of trip-requests (henceforth \emph{passengers}), and then must decide on a set of bus routes, and match passengers to these routes, using cars to cover `first-last mile' travel. The final trip option presented to each passenger must satisfy her travel needs, which we abstract via the notion of \emph{feasible trip options} for each passenger. The aim of the platform is to maximize some appropriate notion of \emph{system welfare}, which incorporates both utilities of passengers, and costs and constraints of the platform. 

\begin{figure}[!t]
\centering

\tikzset{every picture/.style={line width=0.75pt}} 

\begin{tikzpicture}[x=1.5pt,y=1.5pt,yscale=-1,xscale=1.1]

\draw [color={rgb, 255:red, 208; green, 2; blue, 27 }  ,draw opacity=1 ]   (140,170) -- (160,170) ;
\draw [shift={(160,170)}, rotate = 0] [color={rgb, 255:red, 208; green, 2; blue, 27 }  ,draw opacity=1 ][fill={rgb, 255:red, 208; green, 2; blue, 27 }  ,fill opacity=1 ][line width=0.75]      (0, 0) circle [x radius= 3.35, y radius= 3.35]   ;
\draw [shift={(140,170)}, rotate = 0] [color={rgb, 255:red, 208; green, 2; blue, 27 }  ,draw opacity=1 ][fill={rgb, 255:red, 208; green, 2; blue, 27 }  ,fill opacity=1 ][line width=0.75]      (0, 0) circle [x radius= 3.35, y radius= 3.35]   ;
\draw    (100,130) -- (100,150) ;
\draw    (100,150) -- (100,170) ;
\draw    (120,130) -- (120,150) ;
\draw    (120,150) -- (120,170) ;
\draw    (140,130) -- (140,150) ;
\draw [color={rgb, 255:red, 208; green, 2; blue, 27 }  ,draw opacity=1 ]   (140,150) -- (140,170) ;
\draw    (160,130) -- (160,150) ;
\draw    (180,150) -- (180,170) ;
\draw    (180,150) -- (180,170) ;
\draw    (160,150) -- (160,170) ;
\draw    (180,130) -- (180,150) ;
\draw [color={rgb, 255:red, 3; green, 35; blue, 255 }  ,draw opacity=1 ]   (101,123) -- (158,123) ;
\draw [shift={(161,123)}, rotate = 180] [fill={rgb, 255:red, 3; green, 35; blue, 255 }  ,fill opacity=1 ][line width=0.08]  [draw opacity=0] (5.36,-2.57) -- (0,0) -- (5.36,2.57) -- cycle    ;
\draw [color={rgb, 255:red, 0; green, 0; blue, 0 }  ,draw opacity=1 ]   (93,130) -- (93,140) -- (93,149) ;
\draw [shift={(93,152)}, rotate = 270] [fill={rgb, 255:red, 0; green, 0; blue, 0 }  ,fill opacity=1 ][line width=0.08]  [draw opacity=0] (5.36,-2.57) -- (0,0) -- (5.36,2.57) -- cycle    ;
\draw [color={rgb, 255:red, 0; green, 0; blue, 0 }  ,draw opacity=1 ] [dash pattern={on 4.5pt off 4.5pt}]  (100,156) -- (137,156) ;
\draw [shift={(140,156)}, rotate = 180] [fill={rgb, 255:red, 0; green, 0; blue, 0 }  ,fill opacity=1 ][line width=0.08]  [draw opacity=0] (5.36,-2.57) -- (0,0) -- (5.36,2.57) -- cycle    ;
\draw [color={rgb, 255:red, 0; green, 0; blue, 0 }  ,draw opacity=1 ]   (140,156) -- (157,156) ;
\draw [shift={(160,156)}, rotate = 180] [fill={rgb, 255:red, 0; green, 0; blue, 0 }  ,fill opacity=1 ][line width=0.08]  [draw opacity=0] (5.36,-2.57) -- (0,0) -- (5.36,2.57) -- cycle    ;
\draw [color={rgb, 255:red, 0; green, 0; blue, 0 }  ,draw opacity=1 ]   (166.5,150) -- (166.5,133) ;
\draw [shift={(166.5,130)}, rotate = 450] [fill={rgb, 255:red, 0; green, 0; blue, 0 }  ,fill opacity=1 ][line width=0.08]  [draw opacity=0] (5.36,-2.57) -- (0,0) -- (5.36,2.57) -- cycle    ;
\draw    (140,150) -- (160,150) ;
\draw    (160,150) -- (180,150) ;
\draw [shift={(180,150)}, rotate = 0] [color={rgb, 255:red, 0; green, 0; blue, 0 }  ][fill={rgb, 255:red, 0; green, 0; blue, 0 }  ][line width=0.75]      (0, 0) circle [x radius= 3.35, y radius= 3.35]   ;
\draw [shift={(160,150)}, rotate = 0] [color={rgb, 255:red, 0; green, 0; blue, 0 }  ][fill={rgb, 255:red, 0; green, 0; blue, 0 }  ][line width=0.75]      (0, 0) circle [x radius= 3.35, y radius= 3.35]   ;
\draw [color={rgb, 255:red, 208; green, 2; blue, 27 }  ,draw opacity=1 ]   (160,170) -- (180,170) 
;
\draw [shift={(180,170)}, rotate = 0] [color={rgb, 255:red, 208; green, 2; blue, 27 }  ,draw opacity=1 ][fill={rgb, 255:red, 208; green, 2; blue, 27 }  ,fill opacity=1 ][line width=0.75]      (0, 0) circle [x radius= 3.35, y radius= 3.35]   
;
\draw [shift={(160,170)}, rotate = 0] [color={rgb, 255:red, 208; green, 2; blue, 27 }  ,draw opacity=1 ][fill={rgb, 255:red, 208; green, 2; blue, 27 }  ,fill opacity=1 ][line width=0.75]      (0, 0) circle [x radius= 3.35, y radius= 3.35]   
;
\draw    (100,170) -- (120,170) ;
\draw [shift={(120,170)}, rotate = 0] [color={rgb, 255:red, 0; green, 0; blue, 0 }  ][fill={rgb, 255:red, 0; green, 0; blue, 0 }  ][line width=0.75]      (0, 0) circle [x radius= 3.35, y radius= 3.35]   ;
\draw [shift={(100,170)}, rotate = 0] [color={rgb, 255:red, 0; green, 0; blue, 0 }  ][fill={rgb, 255:red, 0; green, 0; blue, 0 }  ][line width=0.75]      (0, 0) circle [x radius= 3.35, y radius= 3.35]   ;
\draw    (120,170) -- (140,170) ;
\draw [shift={(140,170)}, rotate = 0] [color={rgb, 255:red, 0; green, 0; blue, 0 }  ][fill={rgb, 255:red, 0; green, 0; blue, 0 }  ][line width=0.75]      (0, 0) circle [x radius= 3.35, y radius= 3.35]   ;
\draw [shift={(120,170)}, rotate = 0] [color={rgb, 255:red, 0; green, 0; blue, 0 }  ][fill={rgb, 255:red, 0; green, 0; blue, 0 }  ][line width=0.75]      (0, 0) circle [x radius= 3.35, y radius= 3.35]   ;
\draw [color={rgb, 255:red, 0; green, 0; blue, 0 }  ,draw opacity=1 ]   (100,130) -- (120,130) 
;
\draw [shift={(120,130)}, rotate = 0] [color={rgb, 255:red, 0; green, 0; blue, 0 }  ,draw opacity=1 ][fill={rgb, 255:red, 0; green, 0; blue, 0 }  ,fill opacity=1 ][line width=0.75]      (0, 0) circle [x radius= 3.35, y radius= 3.35]   
;
\draw [shift={(100,130)}, rotate = 0] [color={rgb, 255:red, 0; green, 0; blue, 0 }  ,draw opacity=1 ][fill={rgb, 255:red, 0; green, 200; blue, 0 }  ,fill opacity=1 ][line width=0.75]      (0, 0) circle [x radius= 3.35, y radius= 3.35]   
;
\draw [color={rgb, 255:red, 0; green, 0; blue, 0 }  ,draw opacity=1 ]   (120,130) -- (140,130) ;
\draw [shift={(140,130)}, rotate = 0] [color={rgb, 255:red, 0; green, 0; blue, 0 }  ,draw opacity=1 ][fill={rgb, 255:red, 0; green, 0; blue, 0 }  ,fill opacity=1 ][line width=0.75]      (0, 0) circle [x radius= 3.35, y radius= 3.35]   
;
\draw [shift={(120,130)}, rotate = 0] [color={rgb, 255:red, 0; green, 0; blue, 0 }  ,draw opacity=1 ][fill={rgb, 255:red, 0; green, 0; blue, 0 }  ,fill opacity=1 ][line width=0.75]      (0, 0) circle [x radius= 3.35, y radius= 3.35]   
;
\draw [color={rgb, 255:red, 0; green, 0; blue, 0 }  ,draw opacity=1 ]   (140,130) -- (160,130) 
;
\draw [shift={(140,130)}, rotate = 0] [color={rgb, 255:red, 0; green, 0; blue, 0 }  ,draw opacity=1 ][fill={rgb, 255:red, 0; green, 0; blue, 0 }  ,fill opacity=1 ][line width=0.75]      (0, 0) circle [x radius= 3.35, y radius= 3.35]   
;
\draw    (160,130) -- (169.5,130) -- (180,130) ;
\draw [shift={(180,130)}, rotate = 0] [color={rgb, 255:red, 0; green, 0; blue, 0 }  ][fill={rgb, 255:red, 0; green, 0; blue, 0 }  ][line width=0.75]      (0, 0) circle [x radius= 3.35, y radius= 3.35]   ;
\draw [color={rgb, 255:red, 208; green, 2; blue, 27 }  ,draw opacity=1 ]   (100,150) -- (120,150) ;
\draw [shift={(120,150)}, rotate = 0] [color={rgb, 255:red, 208; green, 2; blue, 27 }  ,draw opacity=1 ][fill={rgb, 255:red, 208; green, 2; blue, 27 }  ,fill opacity=1 ][line width=0.75]      (0, 0) circle [x radius= 3.35, y radius= 3.35]   
;
\draw [shift={(100,150)}, rotate = 0] [color={rgb, 255:red, 208; green, 2; blue, 27 }  ,draw opacity=1 ][fill={rgb, 255:red, 208; green, 2; blue, 27 }  ,fill opacity=1 ][line width=0.75]      (0, 0) circle [x radius= 3.35, y radius= 3.35]   
;
\draw [color={rgb, 255:red, 208; green, 2; blue, 27 }  ,draw opacity=1 ]   (120,150) -- (140,150) ;
\draw [shift={(140,150)}, rotate = 0] [color={rgb, 255:red, 208; green, 2; blue, 27 }  ,draw opacity=1 ][fill={rgb, 255:red, 208; green, 2; blue, 27 }  ,fill opacity=1 ][line width=0.75]      (0, 0) circle [x radius= 3.35, y radius= 3.35]   
;
\draw [shift={(120,150)}, rotate = 0] [color={rgb, 255:red, 208; green, 2; blue, 27 }  ,draw opacity=1 ][fill={rgb, 255:red, 208; green, 2; blue, 27 }  ,fill opacity=1 ][line width=0.75]      (0, 0) circle [x radius= 3.35, y radius= 3.35]   
;
\draw [shift={(160,130)}, rotate = 0] [color={rgb, 255:red, 0; green, 0; blue, 0 }  ,draw opacity=1 ][fill={rgb, 255:red, 0; green, 200; blue, 0 }  ,fill opacity=1 ][line width=0.75]      (0, 0) circle [x radius= 3.35, y radius= 3.35]   
;
\draw (95,115.4) node [anchor=north west][inner sep=0.75pt]  [font=\normalsize]  {$s$};
\draw (164,115.4) node [anchor=north west][inner sep=0.75pt]  [font=\normalsize]  {$d$};
\draw (105,110) node [anchor=north west][inner sep=0.75pt]   [align=left] {{\small direct travel by car}};
\draw (52,135) node [anchor=north west][inner sep=0.75pt]   [align=left] {{\small hybrid travel}};
\draw (148,179) node [anchor=north west][inner sep=0.75pt]  [font=\small,color={rgb, 255:red, 208; green, 2; blue, 27 }  ,opacity=1 ] [align=left] {bus line};
\end{tikzpicture}
\caption{Example transit network with a single bus route (marked in red) and a single passenger traveling from source node $s$ to destination node $d$ (marked in green). The passenger has $2$ trip options: she can travel directly by car from $s$ to $d$ (blue arrow), or use a hybrid trip option comprising the dashed portion of the bus route, completing the rest of the trip by car (solid black arrow).}
\label{fig:passenger-options}
\end{figure}

\noindent\textbf{Vehicle Fleet Model.} {As mentioned above, the platform controls both a fleet of cars (which can serve a single passenger) and buses (which are high-capacity). Since in most ride-hailing systems, the former fleet is much larger, and has a high density throughout the city, we primarily focus on the routing/scheduling decisions for buses, incorporating the constraints and costs of the car fleet in the value function of passengers.
}

Buses have a fixed capacity $C \in \mathbb{N}$, corresponding to the maximum number of passengers a bus can simultaneously accommodate. 
We define a {\it route} $r$ to be a fixed sequence of consecutive edges of $G$, and let $\mathcal{R}$ denote the set of all routes of cost at most $D \in (0,T]$, where $D$ is a constant determined by the platform (for example, the duration of the longest bus ride such that the trip is completed within the time window). Moreover, the platform is said to serve route $r \in \mathcal{R}$ at frequency $f \in \mathbb{N}$ if $f$ buses traverse $r$ during the time window.
A key abstraction in this paper is that of a {\it line}, which we formally define below.
\begin{definition}[Line]
The platform is said to operate a {\it line} $\ell = (r_{\ell},f_{\ell})$ if it runs high-capacity vehicles on route $r_{\ell}$ at frequency $f_{\ell}$. 
\end{definition}
We use $\mathcal{L} = \left\{(r, f) \mid (r,f) \in \mathcal{R}\times\mathbb{N}\right\}$ to denote the set of all \emph{feasible} lines the platform can operate, and let $L=|\mathcal{L}|$. Note that a line can accommodate at most $C\times f_{\ell}$ passengers for each edge $e \in r_{\ell}$, and as such it is without loss of generality to assume that $f_{\ell} \in \{1,\ldots,\lceil N/C \rceil\} \, \forall \, \ell \in \mathcal{L}$, where $N$ is the total number of trip requests during the time window.

The platform has a \emph{budget} $B \in \mathbb{R}_+$ with which to open a set of lines. Let $c_{\ell}$ denote the cost of operating line $\ell$.
We assume that line costs are {{\it strictly increasing}} and {\it subadditive} in the frequencies. That is, suppose lines $\ell_1$ and $\ell_2$ use the same route $r$ and have frequencies $f_1, f_2$, respectively. Then:
\begin{enumerate}[$(i)$]
\item strictly increasing: $f_1 < f_2 \implies c_{\ell_1} < c_{\ell_2}$
\item subadditive: $c_{\ell_1} + c_{\ell_2} \leq c_{\ell_3}$, where $\ell_3 = (r, f_1 + f_2)$. 
\end{enumerate}

\noindent\textbf{Passenger Model.}
We use $\mathcal{P}$ to denote the set of all passengers requesting a trip during the time window, and $N = |\mathcal{P}|$ the total number of all such passengers. Each passenger $p\in \mathcal{P}$ is associated with fixed source and destination nodes $(s_p,d_p)$. 
To travel between these nodes, she can use a combination of cars and buses: in particular, she can travel directly from $s_p$ to $d_p$ exclusively by car; alternatively, she can travel by bus for the `middle leg' of her journey, and use cars for the first and last legs (if source/destination is not on the bus route). Figure~\ref{fig:passenger-options} illustrates these possibilities.

In principle, a more complex trip option could also involve multiple bus segments. In this work, however, we restrict passengers to take one of the above two trip options. 

\textbf{Assumption 1 (No inter-bus transfers).}
A trip can only comprise of a {\it single} bus leg; i.e., the platform cannot assign any passenger to multiple lines. 

From a practical perspective, this is a reasonable assumption, given that a passenger may already incur two transfers for the first and last miles of her trip. 
More importantly, in Section~\ref{sec:hardness} we show that if we relax this assumption by allowing the platform to use trip options involving even just two inter-line transfers, then we can not hope to achieve any constant-factor approximation.

Given line $\ell$, let $\Omega_{\ell p}$ denote the set of all trip options matching passenger $p$ to line $\ell$ that are \emph{feasible}, i.e., where the passenger completes her journey within the time window. Formally, 
\begin{align*}
\Omega_{\ell p} = \Big\{(s_p,i,j,d_p)|\; i,j \in r_{\ell}, 
p \text{ travels } s_p \rightarrow i \text{ and } j \rightarrow d_p \text{ by car, and } i \rightarrow j \text{ by bus line } \ell\Big\}
\end{align*}
Let {$\Omega_p = \{(\omega, \ell):\omega\in\Omega_{\ell p}, \ell \in \mathcal{L}\}$.} 
For each passenger $p$, there is an associated reward (or value) function {$v_p: \Omega_{p} \mapsto \mathbb{R}_+$}, representing the quality (from either the platform or the passenger's perspective) of a trip option using line $\ell$ (including potential costs incurred by the platform for the passenger's short car trip). We assume that $v_{p}(\cdot)$ is {\it non-decreasing} in the frequency of a line. Formally, suppose lines $\ell_1$ and $\ell_2$ use the same route $r$ and have frequencies $f_1$ and $f_2$, respectively. Since $\ell_1$ and $\ell_2$ share the same route $r$, we have $\Omega_{\ell_1 p} = \Omega_{\ell_2 p}$ for all $p \in \mathcal{P}$. Then, $f_1 \leq f_2 \implies v_p(\omega, \ell_1) \leq v_p(\omega, \ell_2)$ for all $\omega \in \Omega_{\ell_1 p}$.

The above formalism naturally covers trip options that {do not involve a bus segment}; in particular, we use $\omega = \varnothing$ to denote the option which consists of a passenger traveling directly from source to destination by car (the {\it no-line} option). With slight abuse of notation, we assume that $v_p(\varnothing) = 0$ for all $p \in \mathcal{P}$. Hence, one can think of the value associated with assigning a passenger to a trip option as being {relative to} the status quo ride-hailing service.

For any passenger $p$ and line $\ell$, we define the \emph{value} associated with matching the two as follows: 
\begin{definition}[Passenger-line value]
We define $\omega_{\ell p}$ and $v_{\ell p}$ to respectively be the optimal trip option, and its corresponding value, over all feasible trip options matching passenger $p$ to line $\ell$, i.e.,
\begin{align*}
v_{\ell p} = \max\left\{v_p(\omega,\ell)|\omega \in \Omega_{\ell p}\right\}, \qquad \omega_{\ell p} = \arg\max\left\{v_p(\omega,\ell)|\omega \in \Omega_{\ell p}\right\}
\end{align*}
\end{definition}
If $v_{\ell p} > 0$, we say that line $\ell$ {\it covers} passenger $p$. Let $r_{\ell p}$ denote the sub-route of $r_\ell$ used by passenger $p$ for this option. 
If $e \in r_{\ell p}$, we say that the passenger {\it uses} edge $e$. Note that computing $v_{\ell p}$ can be done in polynomial time. This follows from the fact that, if $r_{\ell}$ consists of $n_{\ell}$ edges, there are $O(n_{\ell}^2)$ possible trip options to consider for passenger $p$. Since the maximum cost (duration) of a route $D$ is constant, and $\tau_e$ is lower bounded by a constant for all $e \in E$, then $n_\ell$ is polynomial in $n$.

Using the above notation, we assume throughout that if passenger $p$ is matched to line $\ell$, she uses trip option $\omega_{\ell p}$. This assumption is primarily for the sake of simplifying the presentation; in Appendix~\ref{ssec:relaxing-trip-optimality} we discuss how our algorithm can be modified to consider all possible trip options for each line-passenger pair, and show that this only leads to an additional constant factor loss in the approximation guarantee.


\noindent\textbf{Platform Objective.}
The following example illustrates a natural value function for a platform seeking to design such an integrated mobility service.
\begin{example}\label{ex:value-function}
We abuse notation and assume that, for this example, a trip option can be parametrized by the total duration of the trip $T$ and the duration of the portion of the trip completed by car, denoted $\tcar$. Consider the following piecewise linear function, representing the reduction in time traveled by car as compared to a direct trip by car:
\begin{equation}
v_p(T,\tcar) = \begin{cases} \beta t_{s_pd_p}^\star-\tcar \quad &\text{if } T < (1+\alpha)t_{s_pd_p}^\star, \, \tcar < \beta t_{s_pd_p}^\star \\
0 &\text{otherwise}
\end{cases}
\end{equation}
where $t_{s_pd_p}^\star$ represents the time required to travel from $s_p$ to $d_p$ directly by car, $\alpha \in \mathbb{R}_+$ represents passengers' tolerance for the duration of a trip relative to the most direct route, and $\beta \in (0,1]$ controls the gains in efficiency of a trip option.

For this value function, the trip optimality assumption implies that the passenger must be picked up and dropped off at the bus stops that are closest to $s_p$ and $d_p$, respectively.
\end{example}


{Finally, in line with the motivating application of the platform receiving trip requests in advance via a scheduling service, we assume that the platform sees batch demand, and that passengers are willing to wait for the entirety of the time window. As such, we abstract away the notions of travel and clock times.} {In Appendix~\ref{ssec:travel-times} we show that such an assumption is without loss of generality, and that all results hold for a more realistic model in which there are travel times, passengers are associated with the time at which they made the request, and as a result should only be matched to lines whose schedule lines up with the time at which they are traveling.}

\subsection{The \lineplanningproblem{} (\lpp)}\label{ssec:rlpp}

Let $S \subseteq \mathcal{L}$ denote a subset of lines to be created, and $\bx \in \{0,1\}^{N\times L}$ denote an assignment of passengers to the chosen subset of lines. We first define the {\it system welfare} induced by $S$ and $\bx$.  
{\begin{definition}[Welfare]
Given $S$ and $\bx$, the {\it welfare} $W$ of the system is the sum of all passenger-line values for the lines created under this assignment. Formally:
\begin{align*}
    W = \sum_{p \in \mathcal{P}}\sum_{\ell \in S} v_{\ell p} x_{\ell p}
\end{align*}
\end{definition}}

We now define the \lineplanningproblem{}.

\begin{definition}[\lineplanningproblem{}]
The \lineplanningproblem{} is defined by a graph $G$, a set of passengers $\mathcal{P}$, costs $\{c_{\ell}\}_{\ell \in \mathcal{L}}$ for opening lines, passenger valuations $\{v_{\ell p}\}_{\ell \in \mathcal{L},p\in\mathcal{P}}$ for using each line, an overall budget $B$, and a bus capacity $C$.
The goal is to find a subset of lines to open and an assignment of passengers to lines that maximize the welfare of the system, such that:
\begin{enumerate}[$(i)$]
    \item the total cost of creating all lines in this subset does not exceed the platform's budget;
    \item the number of passengers assigned to line $\ell$ and whose trip uses edge $e \in r_{\ell}$ does not exceed the capacity $C\times f_\ell$ of the buses, for all $e \in r_{\ell}$;
    \item a passenger is assigned to at most one line (which implies no inter-bus transfers).
\end{enumerate}
\end{definition}

We allow for a passenger to not be assigned to any line. In this case, we assume that the passenger's trip is completed exclusively by car, and yields a value of zero.

Formally, the platform's optimization problem is given by:
\begin{align}
    {(P)} \qquad \max_{\by, \bx} \qquad &\sum_{p \in \mathcal{P}}\sum_{\ell \in \mathcal{L}} v_{\ell p} x_{\ell p} \notag  \\
    \text{s.t.} \qquad & \sum_{\ell \in \mathcal{L}} c_\ell y_\ell \leq B \label{eq:budget-constraint}\\
    & \sum_{\substack{p \in P:\\ e \in r_{\ell p}}} x_{\ell p} \leq C \, f_\ell \, y_\ell \quad \forall \, \ell \in \mathcal{L}, e \in r_\ell \label{eq:capacity-constraint}\\
    &\sum_{\ell \in \mathcal{L}} x_{\ell p} \leq 1 \quad \forall \, p \in \mathcal{P} \label{eq:transfer-constraint} \\
    &x_{\ell p} \in \{0,1\} \quad \forall \, p \in \mathcal{P}, \ell \in \mathcal{L} \quad \notag \\
    &y_{\ell} \in \{0,1\} \quad \forall\, \ell \in \mathcal{L} \notag
\end{align}

Let $OPT$ denote the optimal value of this optimization problem. In this formulation, the decision variables $\by \in \{0,1\}^{L}$ represent the set of lines to be opened. Recall, $\bx \in \{0,1\}^{N\times L}$ corresponds to the assignment of passengers to lines. Constraints~\eqref{eq:budget-constraint},~\eqref{eq:capacity-constraint},~\eqref{eq:transfer-constraint} respectively encode the budget, capacity, and assignment to at most one line.

For any passenger $p \in \mathcal{P}$, in the worst case there are exponentially many routes between $s_p$ and $d_p$, and as a result $(P)$ has exponentially many variables and constraints. For our main result, we make the following assumption regarding the set of routes input to \lpp.

\textbf{Assumption 2 (Candidate set of routes).} The platform has access to a pre-specified set of feasible routes that is polynomial in the size of the network. 

We let $L$ denote the size of the set of lines $\mathcal{L}$ induced by the candidate set of routes and all possible frequencies. Note that the candidate set of routes assumption implies that $L$ is polynomial in $n$. 

The assumption of such a candidate set is practically rooted in the reality of transportation systems, in which experts typically have knowledge of a priori ``acceptable'' bus routes and can develop good heuristics. Moreover, such an assumption is in line with the approach adopted in prior work on line planning, which typically generates the candidate set of routes via such heuristics~\citep{CEDER, Chakr, Fan}. In Section~\ref{sec:hardness}, we show that one cannot hope to obtain a constant-factor approximation to the \lineplanningproblem{} unless the platform has access to such a candidate set. 

{We note that the above integer linear programming (ILP) formulation problem is the most natural formulation of the platform's optimization problem, as well as the formulation upon which existing exact methods are based~\citep{wan2003mixed,barra2007solving,marin2009urban,nachtigall2008simultaneous}. In Section~\ref{sec:main-result}, we present an equivalent, less-immediate formulation of the platform's optimization problem upon which our algorithm relies. We nonetheless present this natural formulation, as we will benchmark our algorithm's performance against it in Section~\ref{sec:numerical-experiments}.}
Table~\ref{table:notation} summarizes the most frequently-used notation in the paper.

\begin{table*}[h!]
\begin{tabular}{l|l}
\textbf{Symbol} & \textbf{Definition} \\ 
\hline
$G(V,E)$ & Transit network with $|V|=n$ nodes\\
$\mathcal{L}$ & Pre-specified set of lines, with $L = |\mathcal{L}|$\\
$\mathcal{P}$ & Set of passengers, with $N = |\mathcal{P}|$ \\
$\Omega_{\ell p}$ & Set of feasible trip options for passenger $p$ traveling via line $\ell$ \\
$C$ & Bus capacity \\
$B$ & Platform budget for opening lines \\
$v_{p}(\omega, \ell)$ & Value of trip option $\omega \in \Omega_{\ell p}$ for passenger $p$ traveling via line $\ell$\\ 
$v_{\ell p}$ & Value of optimal trip option for passenger $p$ on line $\ell$\\ 
$c_{\ell}$ & Cost of opening line $\ell$ \\
$f_{\ell}$ & Frequency of line $\ell$ \\
\hline
\end{tabular}
\caption{List of frequently-used notations}
\label{table:notation}
\end{table*}

\section{Fundamental limits of real-time routing}
\label{sec:hardness}

The model in Section~\ref{sec:preliminaries} is endowed with two assumptions: $(i)$ the existence of a pre-specified \emph{candidate set} of feasible lines $\mathcal{L}$ that is polynomial in the number of nodes $n$, and $(ii)$ that trip options can involve at most a single bus segment. 

In this section, we show that these assumptions are not just practically relevant, but also have strong theoretical justifications: if either assumption fails to hold, a constant-factor approximation is out of reach. We moreover show that, even in the setting where these two assumptions hold, standard approximation techniques that leverage naive LP relaxations and rely on submodularity are inadequate, emphasizing the non-triviality of the task of designing provably good approximations for fast, real-time routing. 

In the remainder of this section, we provide the main ideas of our reductions, and defer proofs of all auxiliary propositions to Appendix~\ref{app:hardness-proofs}.

\subsection{Necessity of a candidate set of lines}

Suppose first that the \platform{} does not have access to a candidate set of lines, and thus, for each passenger $p \in \mathcal{P}$, must consider all possible walks of bounded cost between source $s_p$ and destination $d_p$. We show that this problem is hard to approximate even in a particularly simple instance of \lpp\ with only a single allowed line, which we term the Single Line Problem (\textsc{Slp}).

\begin{definition}[Single Line Problem]
In the Single Line Problem, the feasible routes are the walks in the graph of cost at most $D$. Suppose $c_{\ell} = cf_{\ell}$ for all $\ell \in \mathcal{L}$, for some constant $c > 0$. Moreover, suppose $B = c$. That is, only a single line of frequency $f_\ell = 1$ can be opened. The goal is to find the line that maximizes the social welfare of the system.
\end{definition}

Using this, we get our first hardness result for \lpp.

\begin{theorem}\label{thm:necessity-of-candidate-set}
Unless NP has polynomial Las Vegas algorithms, the Single Line Problem is hard to approximate to a ratio better than $\Omega(\log^{1-\varepsilon} n)$.
\end{theorem}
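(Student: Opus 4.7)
My plan is to give an approximation-preserving reduction from a classical problem whose $\Omega(\log^{1-\varepsilon} n)$-inapproximability (under the stated $\mathrm{NP}\not\subseteq\mathrm{ZPP}$-type Las Vegas assumption) is well established, e.g., a Label-Cover (or Label-Cover-derived maximum coverage) variant. The reduction is motivated by the observation that, in the Single Line Problem, a single walk of cost at most $D$ must simultaneously ``cover'' many passenger pairs $(s_p,d_p)$, which naturally mirrors selecting one label per vertex so as to satisfy the maximum number of pairwise constraints.

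First, from an instance of the source problem I would construct a layered transit graph $G$ with one \emph{layer gadget} per vertex of the source instance; each gadget consists of a parallel bundle of sub-paths (``lanes''), one per possible label. By calibrating edge weights and the cost bound $D$, I would ensure that every walk of cost at most $D$ traverses exactly one lane per layer, so that feasible walks are in bijection with labelings. For each pairwise constraint (super-edge) of the source, I would then introduce one passenger $p$, placing $s_p$ and $d_p$ so that, under the time-savings value function of Example~\ref{ex:value-function}, the line carrying a given walk satisfies $v_{\ell p}>0$ iff the walk encodes a pair of labels satisfying that constraint. Setting $C=1$ and $f_\ell=1$ keeps us in the Single Line regime.

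Second, I would verify that the reduction preserves approximation ratios up to a uniform constant: the welfare of the best single line is proportional to the number of satisfied constraints, so any polynomial-time $\rho$-approximation for \textsc{Slp} yields a $\rho$-approximation for the source problem. Since $n=|V|$ is polynomial in the size of the source instance, the hardness bound $\Omega(\log^{1-\varepsilon} n)$ carries over, yielding Theorem~\ref{thm:necessity-of-candidate-set}.

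The hard part will be the gadget calibration. Concretely, I must jointly choose edge costs, the budget $D$, and the parameters $(\alpha,\beta)$ of the value function so that (i) any walk of cost at most $D$ traverses exactly one lane per layer, with no backtracking, skipping, or lane splitting permitted by the cost budget; and (ii) for each passenger $p$, a walk yields $v_{\ell p}>0$ precisely when it encodes a satisfying label pair, so that ``cheating'' walks that deviate from the intended template either exceed $D$ or induce a car leg with $\tcar \ge \beta t^{\star}_{s_p d_p}$ (or total time $T\ge (1+\alpha)t^{\star}_{s_p d_p}$), zeroing out the reward. Ruling out such deviations is the subtle step: it will require padding the distances between layers, and between each $s_p,d_p$ and the correct gadget endpoints, tightly enough that any unintended traversal blows up either the walk cost or the relevant car-leg time beyond the feasibility thresholds, while preserving the polynomial-size gap needed to transfer the $\log^{1-\varepsilon} n$ bound.
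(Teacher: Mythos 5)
Your proposal diverges substantially from the paper's proof, and it has two genuine gaps. First, the source of hardness is misidentified. The $\Omega(\log^{1-\varepsilon} n)$ lower bound under the ``no polynomial Las Vegas algorithms'' assumption is not a property of Label Cover or of its maximum-coverage derivatives: plain Max Coverage is only $(1-\frac1e)$-hard, and Label Cover gap amplification yields bounds of the form $2^{\log^{1-\varepsilon} n}$ for \emph{minimization} problems under quasi-polynomial-time assumptions. The specific ratio $\log^{1-\varepsilon} n$ for ``one walk of bounded cost must touch many groups'' is precisely the Group Steiner Tree / Orienteering group TSP hardness of Halperin--Krauthgamer and Chekuri--P\'al, which requires a recursive composition argument, not a single layered gadget. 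The paper's proof simply reduces from Orienteering group TSP as a black box: each group $S_i$ becomes a passenger $p_i$ with an auxiliary destination $g_i$, edge costs are set so that $p_i$ obtains positive value iff the bus walk reaches some node of $S_i$, and the welfare of a line equals the number of groups its walk spans. Your plan would have to re-derive that entire hardness machinery inside the gadget calibration, which is not a calibration issue but the whole theorem.

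Second, even granting a suitable source problem, the soundness step you flag as ``the hard part'' is likely unfixable in the form you describe. A line is a \emph{walk}, not a simple path, so nothing prevents it from re-entering a layer gadget and traversing a second lane; whether the budget $D$ permits this depends on a global accounting across all layers that padding alone does not control (a walk that skips one layer's padding can spend the savings doubling up elsewhere). In a Label Cover encoding, permitting even two labels per vertex is exactly the relaxation under which the hardness collapses, so your reduction's gap would evaporate unless multi-lane traversal is provably impossible --- and you give no mechanism for that. The paper's construction sidesteps this entirely: it needs no bijection between walks and combinatorial solutions, because touching a group twice yields no extra value and the objective is monotone in the set of groups visited. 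If you want to salvage your write-up, replace the Label Cover source with Orienteering group TSP (citing its known $\Omega(\log^{1-\varepsilon} n)$ hardness) and replace the layered-lanes gadget with per-group sink vertices $g_i$ whose incident edge costs exceed the bus budget $D$, so that positive passenger value certifies that the walk visits the group.
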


To establish this inapproximability result, we give a reduction from the Orienteering group TSP problem (\textsc{OgTSP}), for which the approximation lower bound is $\Omega\left(\log^{1-\varepsilon} n\right)$~\citep{recursive_greedy}. 

\begin{definition}[Orienteering group TSP]
Given an undirected graph $G=(V,E)$, with edge costs $w: E \mapsto \mathbb{R}_+$, $k$ sets (or groups) of vertices $S_{1},\ldots, S_{k} \subseteq V$, a root vertex $r$ and a budget $D>0$, the goal is to find a walk of cost no more than $D$ which spans the maximum number of groups.\footnote{We assume without loss of generality that the root does not belong to any of the groups.}
\end{definition}

\begin{proof}[Proof of Theorem~\ref{thm:necessity-of-candidate-set}]
Consider an instance of \textsc{OgTSP}. Recall, we've assumed that there exists a constant $\tau_{\min} > 0$ such that $\tau_e > \tau_{\min} \, \forall \, e \in E$. Define $\varepsilon \in (0, \tau_{\min}]$. We use $diam(G)$ to denote the diameter of the graph, and let $t \in \mathbb{R}$ be such that $t > \max\{ diam(G)+\varepsilon, D + \varepsilon\}$.

We construct an instance of \textsc{Slp} as follows. For each group $S_i$, we add a node $g_i$ to $G$, an edge $(r,g_i)$ of cost $t$ and an edge $(j,g_i)$ of cost $t-\varepsilon$ for each node $j \in S_i$. Let $G' = (V',E')$ denote this augmented graph, and let $D$ be the maximum cost of any feasible route on $G'$. For each $i \in [k]$, create a passenger $p_i$ with $s_{p_i} = r$ and $d_{p_i} = g_i$. 

For line-passenger pair $(\ell,p_i)$, suppose trip option $\omega$ is such that passenger $p_i$ travels by car from $r$ to $j_1(\omega)$, and from $j_2(\omega)$ to $g_i$, where $j_1(\omega), j_2(\omega) \in V.$ We use $t^{\text{car}}(\omega)$ to denote the total cost of the min-cost paths from $r$ to $j_1(\omega)$ and from $j_2(\omega)$ to $g_i$, and let $t^\star_{p_i}$ denote the min-cost path from $r$ to $g_i$. If $p_i$ travels directly from $r$ to $g_i$ via edge $(r,g_i)$, then $t^{\text{car}}(\omega) = t$.

We define the value function as follows:
\begin{align*}
    v_{p_i}(\omega,\ell) = \begin{cases}
    1 &\quad \text{ if } t^{\text{car}}(\omega) \leq (1-\frac{\varepsilon}{t})t_{p_i}^\star \\
    0 &\quad \text{ otherwise.}
    \end{cases}
\end{align*}

\begin{figure}\label{fig:two-lines}
    \centering
    \includegraphics[scale=0.12]{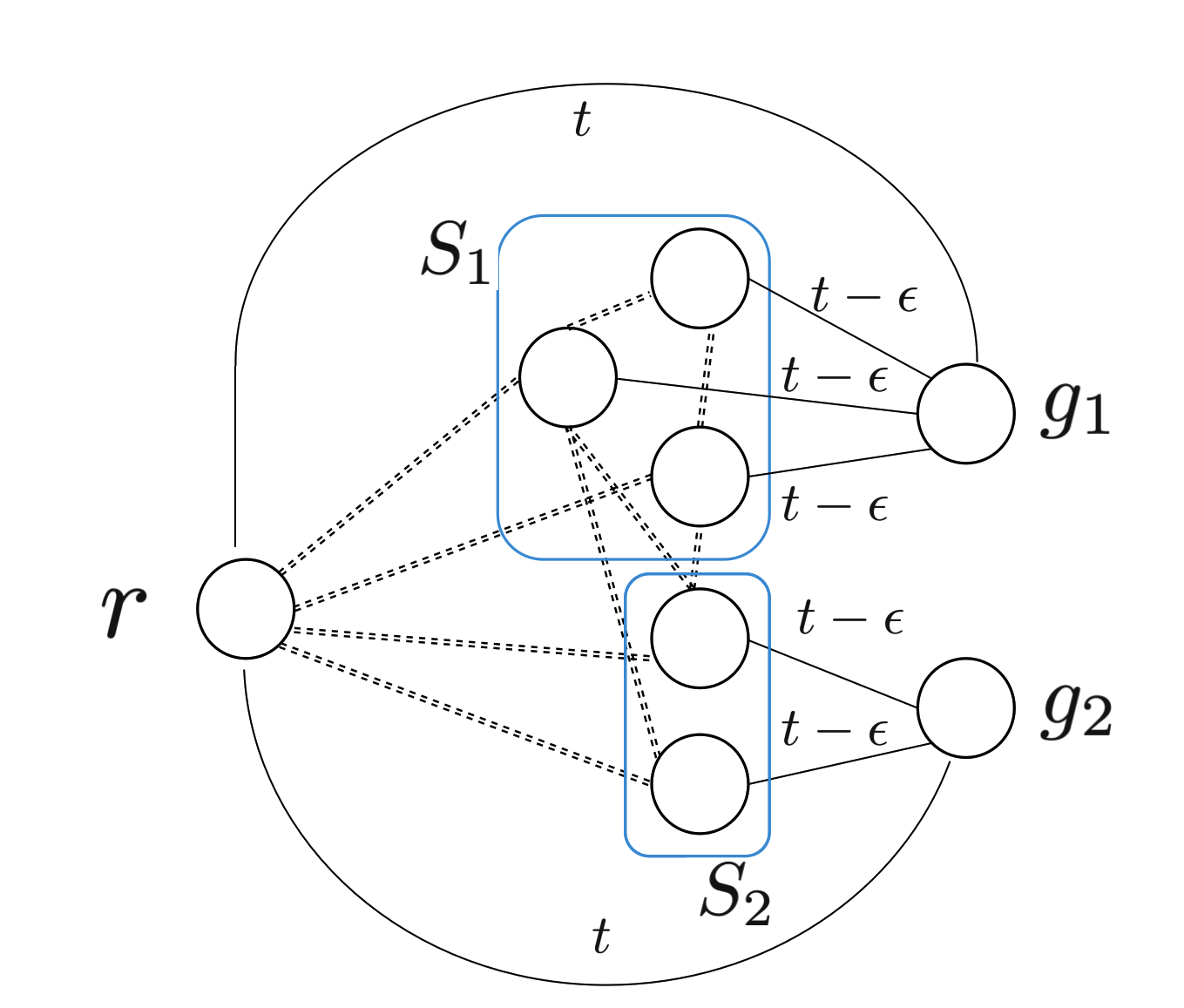}
    \caption{Construction of graph $G'$ from an instance of \textsc{OgTSP} with two groups $S_1$ and $S_2$. The dashed lines represent the edges of the original graph $G$.}
\end{figure}

 Propositions~\ref{prop:helper} and~\ref{prop:hardness-helper} characterize the ways in which $p_i$ can feasibly travel from $r$ to $g_i$.
{\begin{proposition}\label{prop:helper}
For all $\omega$ such that $t^{\text{car}}(\omega) > t-\varepsilon$, $v_{p_i}(\omega,\ell) = 0$. 
\end{proposition}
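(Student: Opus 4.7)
The plan is to reduce the claim to a tight bound on $t^\star_{p_i}$ and then apply the definition of $v_{p_i}$. Concretely, recall that the value function is defined so that $v_{p_i}(\omega,\ell)=1$ requires $t^{\text{car}}(\omega) \leq (1-\varepsilon/t)\,t^\star_{p_i}$, and $0$ otherwise. So it suffices to show that $(1-\varepsilon/t)\,t^\star_{p_i} \leq t-\varepsilon$, because then any $\omega$ with $t^{\text{car}}(\omega) > t-\varepsilon$ automatically fails the threshold.

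First, I would observe that the direct edge $(r,g_i)$ of cost $t$ exists in $G'$, so the minimum-cost $r$-$g_i$ path satisfies $t^\star_{p_i} \leq t$. (One could also check, for completeness, that no cheaper route exists: any alternative path must use some edge $(j,g_i)$ of cost $t-\varepsilon$ with $j \in S_i$, and the preceding subpath from $r$ to $j$ has cost at least $\tau_{\min} \geq \varepsilon$ since $r \notin S_i$, so every alternative path costs at least $t$. This gives $t^\star_{p_i}=t$, though only the upper bound is needed.)

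Given $t^\star_{p_i} \leq t$, I would compute
\begin{align*}
\Bigl(1-\frac{\varepsilon}{t}\Bigr)\,t^\star_{p_i} \;\leq\; \Bigl(1-\frac{\varepsilon}{t}\Bigr)\,t \;=\; t-\varepsilon.
\end{align*}
Hence any trip option $\omega$ with $t^{\text{car}}(\omega) > t-\varepsilon$ also satisfies $t^{\text{car}}(\omega) > (1-\varepsilon/t)\,t^\star_{p_i}$, and by the definition of $v_{p_i}$ this yields $v_{p_i}(\omega,\ell)=0$.

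Nothing here really counts as a ``main obstacle''; the whole content of the proposition is the calibration of the constants $t$, $\varepsilon$, and $\tau_{\min}$ in the reduction. The only place one must be slightly careful is confirming $t^\star_{p_i} \leq t$ uses an edge that actually lies in $G'$ (namely the newly added direct edge $(r,g_i)$), and that the choice $\varepsilon \in (0,\tau_{\min}]$ prevents any alternative path through $G$ from beating cost $t$. After that, the bound is a one-line arithmetic step.
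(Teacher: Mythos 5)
Your proof is correct and follows essentially the same route as the paper's: both observe that the direct edge $(r,g_i)$ of cost $t$ gives $t^\star_{p_i}\leq t$, deduce $(1-\varepsilon/t)\,t^\star_{p_i}\leq t-\varepsilon$, and conclude from the definition of $v_{p_i}$. Your additional remark that $t^\star_{p_i}=t$ exactly is a harmless extra not needed for the argument.
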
}

\begin{proposition}\label{prop:hardness-helper}
Passenger $p_i$ can travel from $r$ to $g_i$ in one of two ways:
\begin{enumerate}[(i)]
    \item via edge $(r, g_i)$, in which case this must be by car.
    \item by bus from $r$ to $j \in S_i$, and by car via edge $(j,g_i)$.
\end{enumerate}
\end{proposition}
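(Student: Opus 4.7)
The plan is to exploit the cost structure around each $g_i$ (the edge $(r, g_i)$ has cost $t$, and each $(j, g_i)$ with $j \in S_i$ has cost $t - \varepsilon$), together with the sizing $t > D + \varepsilon$ and $t > \mathrm{diam}(G) + \varepsilon$, to force any feasible trip option for $p_i$ into one of the two stated forms. Concretely, I would pin down $t^\star_{p_i}$, then show that no bus can participate in the final segment of the trip, and conclude via a case analysis on the edge used to enter $g_i$.

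First I would establish two structural observations. (1) $t^\star_{p_i} = t$: any $r$-to-$g_i$ path must enter $g_i$ through one of its incident edges. A path using some $(j, g_i)$ with $j \in S_i$ has cost at least $d_G(r, j) + (t - \varepsilon) \geq \tau_{\min} + (t - \varepsilon) \geq t$, since $r \notin S_i$ and $\varepsilon \leq \tau_{\min}$, while the direct edge $(r, g_i)$ already attains cost $t$. Substituted into the value function, this makes the threshold for value $1$ exactly $\tcar(\omega) \leq t - \varepsilon$, in line with Proposition~\ref{prop:helper}. (2) No bus line can contain any edge incident to $g_i$: each such edge has cost at least $t - \varepsilon > D$, exceeding the route budget $D$. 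Thus the last segment of any trip option must be a car leg terminating at $g_i$ via one of these incident edges.

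With these in hand, I would perform a case analysis on the terminal edge of the last car leg. If it is $(r, g_i)$, the car cost is at least $t$, so Proposition~\ref{prop:helper} forces value zero; since the bus cannot carry this edge, any trip that uses it is exclusively by car, establishing option (i). If it is $(j, g_i)$ for some $j \in S_i$, that single edge already contributes $t - \varepsilon$ to the car cost, and combined with $\tcar(\omega) \leq t - \varepsilon$ the bound must be tight: the last leg is exactly this one edge, forcing $j_2(\omega) = j \in S_i$, and the first car leg must have zero cost, forcing $j_1(\omega) = r$. This yields option (ii).

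The only delicate step is the lower bound $t^\star_{p_i} \geq t$, which crucially relies on $\varepsilon \leq \tau_{\min}$ to rule out a shorter detour through an $S_i$-neighbor of $r$; without this, the threshold for value $1$ could shift and admit trip configurations where the bus starts away from $r$. Once this lower bound is in place, the remainder reduces to a short case analysis on which incident edge of $g_i$ the car uses to arrive.
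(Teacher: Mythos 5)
Your proof is correct and follows essentially the same route as the paper's: rule buses out of every edge incident to $g_i$ via $t-\varepsilon > D$, then use the value threshold $t^{\text{car}}(\omega) \leq t-\varepsilon$ to force the final car leg to be exactly the single edge $(j,g_i)$ with $j \in S_i$ (everything else collapsing to value zero and hence to the direct-car option). If anything, you are slightly more explicit than the paper in pinning down $t^\star_{p_i} = t$ and in arguing that the first car leg must have zero cost so that the bus leg starts at $r$ — a step the paper's proof leaves implicit.
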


Let $\ell^\star$ denote the optimal solution to \textsc{Slp} for this instance.
\begin{proposition}\label{prop:collect-pos-val}
To collect strictly positive value from passenger $p_i$, $\ell^\star$ {\it must} traverse a node $j \in S_i$.
\end{proposition}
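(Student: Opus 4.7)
The plan is to combine the two preceding propositions with a careful accounting of the car-cost threshold for obtaining positive value from passenger $p_i$. As a first step I would pin down $t^\star_{p_i}$, the min-cost path from $r$ to $g_i$ in $G'$. Since every edge of $G$ has cost at least $\tau_{\min}\geq \varepsilon$, any path from $r$ to $g_i$ that does not use the direct edge $(r,g_i)$ must use some edge $(j,g_i)$ with $j\in S_i$ (cost $t-\varepsilon$) together with at least one additional edge to reach $j$ from $r$, for a total of at least $(t-\varepsilon)+\tau_{\min}\geq t$. Hence $t^\star_{p_i}=t$, and the threshold for positive value simplifies to $t^{\text{car}}(\omega)\leq (1-\varepsilon/t)\cdot t = t-\varepsilon$.

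Next I would invoke Proposition~\ref{prop:hardness-helper}, which enumerates the only two feasible travel modes for $p_i$: (i) directly by car along $(r,g_i)$, or (ii) by bus from $r$ to some $j\in S_i$ followed by a car leg across $(j,g_i)$. In case (i), $t^{\text{car}}(\omega)=t$, which fails the threshold $t-\varepsilon$ by Proposition~\ref{prop:helper}, so the trip yields value zero. In case (ii), the car portion consists only of the final $(j,g_i)$ leg, so $t^{\text{car}}(\omega)=t-\varepsilon$ exactly meets the threshold, and positive value is obtained.

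The proposition then follows from the contrapositive: if $\ell^\star$ does not traverse any node of $S_i$, then case (ii) is infeasible for $p_i$, leaving only option (i) which yields zero value. Equivalently, in order to collect strictly positive value from $p_i$, the route of $\ell^\star$ must contain some $j\in S_i$.

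I do not expect any step to be particularly delicate; the main subtlety is simply bookkeeping the edge weights so that the interplay between the threshold $(1-\varepsilon/t)t^\star_{p_i}$, the $(r,g_i)$ edge of cost $t$, the $(j,g_i)$ edges of cost $t-\varepsilon$, and the lower bound $\tau_{\min}\geq \varepsilon$ forces option (ii) to be the unique way to beat the threshold. This is precisely why the reduction sets the in-group edges to cost $t-\varepsilon$ rather than $t$.
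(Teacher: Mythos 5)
Your argument is correct and follows essentially the same route as the paper, which likewise derives the proposition directly from Proposition~\ref{prop:hardness-helper} (and Proposition~\ref{prop:helper}): the only trip options using a bus line that clear the value threshold are those reaching some $j\in S_i$ by bus. Your additional verification that $t^\star_{p_i}=t$ exactly (rather than just $t^\star_{p_i}\le t$, which is all the paper uses) is a harmless refinement, not a different approach.
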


Finally, observe that $\ell^\star$ necessarily only uses edges from $E$. This follows from the fact that all edges in $E'\setminus E$ have cost greater than $D$ by construction, and thus any route using at least one such edge is infeasible. 

Putting these facts together, if line $\ell^\star$ collects value $k' \leq k$ then this implies the existence of a walk of $G$ of cost at most $D$ that has visited $k'$ groups. Thus any $\alpha$-approximation algorithm for the Single Line Problem gives an $\alpha$-approximation for the \textsc{OgTSP}, hence the $\Omega(\log^{1-\varepsilon}(n))$ lower bound for the Single Line Problem.
\end{proof}

\subsection{Hardness of multiple transfers}

Suppose now that the \platform{} has access to a candidate set of lines, but allows itself to assign passengers to {at most} {\it two} lines. More specifically, a passenger $p$ can feasibly be assigned to the following trip options:
\begin{enumerate}[$(i)$]
    \item Travel directly from $s_p$ to $d_p$ by car;
    \item Use a single bus line $\ell \in \mathcal{L}$: for some $v_1 \in r_{\ell}, v_2 \in r_{\ell}$, travel from $s_p$ to $v_1$ by car; join line $\ell$ at $v_1$ and travel to $v_2$ by bus; travel from $v_2$ to $d_p$ by car;
    \item Use two intersecting bus lines $(\ell_1,\ell_2) \in \mathcal{L}\times\mathcal{L}$: for some $v_1 \in r_{\ell_1}, v_2 \in r_{\ell_1}\bigcap r_{\ell_2}, v_3 \in r_{\ell_2}$, travel from $s_p$ to $v_1$ by car; join line $\ell_1$ at $v_1$ and travel to $v_2$ by bus; join line $\ell_2$ at $v_2$ and travel to $v_3$ by bus; travel from $v_3$ to $d_p$ by car. Figure~\ref{fig:two-lines} illustrates such a trip. {We use $\Omega_{(\ell_1,\ell_2),p}$ to denote the set of all such trips.}
\end{enumerate}

Let $v_{(\ell_1,\ell_2),p}$ denote the maximum value passenger $p$ has for all feasible trips using lines $\ell_1$ and $\ell_2$, where $r_{\ell_1}$ and $r_{\ell_2}$ intersect. That is, $v_{(\ell_1,\ell_2),p}= \max\limits_{\substack{\omega \in \Omega_{(\ell_1,\ell_2),p}}} v_{p}(\omega)$. If $v_{(\ell_1,\ell_2),p} > 0$, we say that passenger $p$ is {\it covered} by $\ell_1$ and $\ell_2$.\

\begin{figure}[!t]
\centering
\tikzset{every picture/.style={line width=0.75pt}} 

\begin{tikzpicture}[x=0.75pt,y=0.75pt,yscale=-1,xscale=1]

\draw    (120,90) -- (160,130) ;
\draw [shift={(160,130)}, rotate = 45] [color={rgb, 255:red, 0; green, 0; blue, 0 }  ][fill={rgb, 255:red, 0; green, 0; blue, 0 }  ][line width=0.75]      (0, 0) circle [x radius= 3.35, y radius= 3.35]   ;
\draw [shift={(120,90)}, rotate = 45] [color={rgb, 255:red, 0; green, 0; blue, 0 }  ][fill={rgb, 255:red, 0; green, 0; blue, 0 }  ][line width=0.75]      (0, 0) circle [x radius= 3.35, y radius= 3.35]   ;
\draw    (160,130) -- (200,170) ;
\draw    (160,130) -- (230,100) ;
\draw [shift={(230,100)}, rotate = 336.8] [color={rgb, 255:red, 0; green, 0; blue, 0 }  ][fill={rgb, 255:red, 0; green, 0; blue, 0 }  ][line width=0.75]      (0, 0) circle [x radius= 3.35, y radius= 3.35]   ;
\draw    (260,90) -- (230,100) ;
\draw    (90,160) -- (160,130) ;
\draw [shift={(160,130)}, rotate = 336.8] [color={rgb, 255:red, 0; green, 0; blue, 0 }  ][fill={rgb, 255:red, 0; green, 0; blue, 0 }  ][line width=0.75]      (0, 0) circle [x radius= 3.35, y radius= 3.35]   ;
\draw    (100,70) -- (120,90) ;
\draw [color={rgb, 255:red, 3; green, 3; blue, 226 }  ,draw opacity=1 ]   (139,63) -- (121.12,80.88) ;
\draw [shift={(119,83)}, rotate = 315] [fill={rgb, 255:red, 3; green, 3; blue, 226 }  ,fill opacity=1 ][line width=0.08]  [draw opacity=0] (5.36,-2.57) -- (0,0) -- (5.36,2.57) -- cycle    ;
\draw [color={rgb, 255:red, 3; green, 3; blue, 226 }  ,draw opacity=1 ]   (234,93) -- (221.17,62.76) ;
\draw [shift={(220,60)}, rotate = 427.01] [fill={rgb, 255:red, 3; green, 3; blue, 226 }  ,fill opacity=1 ][line width=0.08]  [draw opacity=0] (5.36,-2.57) -- (0,0) -- (5.36,2.57) -- cycle    ;
\draw  [dash pattern={on 4.5pt off 4.5pt}]  (130,92) -- (157.81,117.95) ;
\draw [shift={(160,120)}, rotate = 223.03] [fill={rgb, 255:red, 0; green, 0; blue, 0 }  ][line width=0.08]  [draw opacity=0] (5.36,-2.57) -- (0,0) -- (5.36,2.57) -- cycle    ;
\draw  [dash pattern={on 4.5pt off 4.5pt}]  (168,121) -- (215.21,102.11) ;
\draw [shift={(218,101)}, rotate = 518.2] [fill={rgb, 255:red, 0; green, 0; blue, 0 }  ][line width=0.08]  [draw opacity=0] (5.36,-2.57) -- (0,0) -- (5.36,2.57) -- cycle    ;

\draw (101,80.4) node [anchor=north west][inner sep=0.75pt]  [font=\small]  {$v_{1}$};
\draw (151,137.4) node [anchor=north west][inner sep=0.75pt]  [font=\small]  {$v_{2}$};
\draw (221,103.4) node [anchor=north west][inner sep=0.75pt]  [font=\small]  {$v_{3}$};
\draw (141,47.4) node [anchor=north west][inner sep=0.75pt]  [font=\small]  {$s$};
\draw (223,47.4) node [anchor=north west][inner sep=0.75pt]  [font=\small]  {$d$};
\draw (204,159.4) node [anchor=north west][inner sep=0.75pt]  [font=\small]  {$\ell _{1} \ $};
\draw (71,143.4) node [anchor=north west][inner sep=0.75pt]  [font=\small]  {$\ell _{2} \ $};

\end{tikzpicture}
\caption{Assignment of a passenger to a pair of lines. The passenger travels by car from $s$ to $v_1$. Between $v_1$ and $v_2$, she travels by bus via line $\ell_1$. At $v_2$ she travels via line $\ell_2$ until being dropped off at $v_3$. She completes her trip by car between $v_3$ and $d$.}
\label{fig:two-lines}
\end{figure}
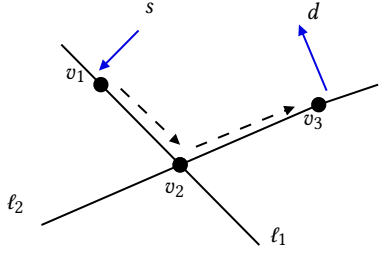

We refer to the problem of matching passengers to at most two bus lines as the {\it Two-Transfer Problem} (\textsc{Ttp}), which we formally define below.
\begin{definition}[Two-Transfer Problem]
Given a budget $B$ and costs $\{c_\ell\}$, the goal is to find a subset $S \subseteq \mathcal{L}$ of budget-respecting lines to open and a feasible assignment of passengers to $S$ which maximizes the social welfare of the system, given by:
\begin{equation*}
    \sum_{p\in \mathcal{P}}\left( \sum_{\ell\in S} v_{\ell p} x_{\ell p} + \sum_{(\ell_1,\ell_2)\in S\times S} v_{(\ell_1,\ell_2),p} \, x_{(\ell_1,\ell_2),p}\right).
\end{equation*}
As before, $\bx$ is an indicator variable representing the assignment of passengers to lines.
\end{definition}


Our next hardness result shows that allowing even two inter-bus transfers banishes any hope of obtaining a constant-factor approximation for \lpp.

\begin{theorem}\label{thm:hardness-many-transfers}
Under the exponential time hypothesis, the Two-Transfer Problem is hard to approximate to a ratio better than $\Omega\left(n^{{1}/{(\log\log(n))^c}}\right)$, where $c > 0$ is a universal constant.
\end{theorem}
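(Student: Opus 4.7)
The plan is to reduce from the Densest $k$-Subgraph problem (DkS): given a graph $H = (V_H, E_H)$ and an integer $k$, find $S \subseteq V_H$ with $|S| = k$ maximizing the number of edges induced by $S$. Under the exponential time hypothesis, Manurangsi (STOC 2017) proved that DkS cannot be approximated within a factor $n_H^{1/(\log\log n_H)^c}$ for some universal constant $c > 0$, where $n_H = |V_H|$. The intuition is that in the Two-Transfer Problem, a passenger whose only useful trip requires exactly two lines is served if and only if \emph{both} of those lines are opened, which mirrors precisely the condition that an edge be preserved in the DkS subgraph.

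Given $(H, k)$, I would construct a TTP instance as follows. Introduce a hub node $h$, and for each $v \in V_H$ add two nodes $a_v, b_v$ together with unit-cost edges $(a_v, h)$ and $(h, b_v)$. For each $v \in V_H$, introduce a candidate line $\ell_v$ with route $a_v \to h \to b_v$, frequency $1$, and cost $c_{\ell_v} = 1$; set the budget $B = k$, so at most $k$ lines can be opened. For each edge $\{u, v\} \in E_H$, fix an arbitrary orientation and create a passenger $p_{uv}$ with $s_{p_{uv}} = a_u$ and $d_{p_{uv}} = b_v$. Adopt the piecewise-linear value function of Example~\ref{ex:value-function} with $\alpha$ large (so the total-time constraint is slack) and $\beta < 1/2$, and pick a bus capacity $C \geq N$ so the capacity constraints are slack.

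For each passenger $p_{uv}$, the most direct car route from $a_u$ to $b_v$ has cost $t^\star = 2$, and the value function requires $\tcar < 2\beta < 1$ for strictly positive value. Direct car travel gives $\tcar = 2$, hence value $0$. Any single-line trip for $p_{uv}$ must include a car segment connecting the chosen line's route to whichever endpoint is off that route, and because all lines intersect only at $h$, the closest on-route node to any $a_w$ or $b_w$ not lying on the chosen line is $h$, at car distance $1$; hence $\tcar \geq 1$ and value $0$. In contrast, the two-line trip that rides $\ell_u$ from $a_u$ to $h$, transfers at $h$, and rides $\ell_v$ to $b_v$ has $\tcar = 0$ and yields value $2\beta > 0$. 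An analogous case analysis rules out any two-line trip $(\ell_{w_1},\ell_{w_2})$ with $w_1 \ne u$ or $w_2 \ne v$, since it forces a car hop from or to the hub. Consequently, the TTP optimum on this instance equals $2\beta \cdot |E_H(S^*)|$, where $S^* \subseteq V_H$ with $|S^*| \leq k$ is the set of vertices whose lines are opened.

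Since $|V| = 2 n_H + 1 = \Theta(n_H)$, any polynomial-time $\rho$-approximation for TTP produces a vertex set $S$ (by reading off which lines are opened, padding with arbitrary vertices if $|S| < k$) with $|E_H(S)| \geq \rho \cdot |E_H(S^*)|$, i.e., a $\rho$-approximation for DkS. The stated inapproximability for TTP then follows (with a possibly different constant $c$) from Manurangsi's hardness of DkS. The main obstacle is ruling out single-line coverage for every passenger in every possible way; the construction handles this by making the hub the sole intersection of line routes, which forces any single-line option to incur a unit-cost car segment that the choice $\beta < 1/2$ converts into zero value.
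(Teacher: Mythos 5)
Your proof is correct and follows essentially the same reduction as the paper's: Densest $k$-Subgraph, with vertices mapped to unit-cost lines, edges mapped to passengers who obtain positive value only when both corresponding lines are open, and budget $B=k$. The only difference is that the paper simply stipulates the required valuations abstractly ($p_{ij}$ ``can only be covered by the pair $(\ell_i,\ell_j)$''), whereas you realize them concretely via a hub-and-spoke network and the value function of Example~\ref{ex:value-function}; this extra gadgetry is sound and, if anything, makes the reduction more self-contained.
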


To prove the theorem, we give a reduction from the densest $k$-subgraph problem, {which admits an approximation lower bound of $\Omega(n^{1/(\log\log n)^c})$ under the exponential time hypothesis~\citep{manurangsi2017almost}.} Given a graph $G=(V,E)$ and a subgraph $G_s=(V_s,E_s)$ of $G$, the density of any subgraph $G_s$ is the ratio of number of edges to the number of nodes in $G_s$ (i.e.$\frac{|E_s|}{|V_s|}$). Now, the densest $k$-subgraph problem is as follows:
\begin{definition}[Densest $k$-subgraph]
Given a graph $G=(V,E)$ with $n = |V|$ and $k\in [n]$, the objective is to find a subgraph $G_s$ of $G$ containing {exactly} $k$ vertices with maximum density.
\end{definition}

{Note that, for fixed $k$, finding the subgraph of maximum density is equivalent to finding a subgraph of size $k$ with the maximum number of edges.}

\begin{proof}[Proof of Theorem~\ref{thm:hardness-many-transfers}]
Given an instance of densest $k$-subgraph, we build an instance of \textsc{Ttp} as follows. For each node $i \in V$, construct a line $\ell_i$, with $c_{\ell_i} = 1$ and frequency $f_{\ell_i}$ large enough to cover all passengers. For every edge $(i,j) \in E$, define a passenger $p_{ij}$, and suppose that $p_{ij}$ can only be covered by the pair of lines $(\ell_i, \ell_j)$, with $v_{(\ell_i,\ell_j),p_{ij}} = 1$. That is, $p_{ij}$ {\it has no value associated with a single bus line.} Finally, let $B = k$. 

We first claim that, for any \textsc{Ttp} feasible solution of value $k'$ which opens $k'' < k$ lines, one can construct a feasible solution which opens {\it exactly} $k$ lines and has value at least $k'$. This simply follows from non-negativity of the value function and the fact that $c_{\ell_i} = 1$ for all $i$. Thus, the platform can always open $k-k''$ more lines until hitting its budget constraint and not decrease the objective, and it is without loss of generality to only consider feasible solutions that open exactly $k$ lines.

We complete the proof by noting that a feasible solution of value $k'$ corresponds exactly to a subgraph of $G$ containing $k'$ edges (passengers) and $k$ nodes (lines). Thus, if we had a constant-factor approximation algorithm for \textsc{Ttp}, then we would also be able to approximate densest $k$-subgraph within a constant factor.
\end{proof}

Henceforth, we operate under the no inter-bus transfers and candidate set of lines assumptions.

\subsection{Inefficacy of standard approximation techniques}

Observe that the ILP formulation of the \lineplanningproblem{} bears a strong resemblance to the Capacitated Facility Location Problem (\textsc{Cflp}), for which~\citet{Wolsey} provides a $1-\frac1e$ approximation algorithm, a guarantee relying on the underlying {\it submodular} structure of \textsc{Cflp}. Our problem crucially differs from this latter problem, however, in the way capacity is accounted for. Whereas the number of clients assigned to a location cannot exceed its capacity in \textsc{Cflp}, in the \lineplanningproblem{} the number of passengers assigned to a bus {\it can exceed} its capacity, as passengers may require non-overlapping subpaths of a bus route. In this section, we show that this simple fact fundamentally alters the structure of our problem, and as such precludes the use of standard techniques for submodular function maximization. 

Let $w: \{0,1\}^{L} \mapsto \mathbb{R}$ denote the social welfare induced by the optimal assignment of passengers to lines, for a given subset of open lines, represented by $\by$. Formally:
\begin{align*}
    w(\by) = \max_{\bx} \qquad &\sum_{p \in \mathcal{P}}\sum_{\ell \in \mathcal{L}} v_{\ell p} x_{\ell p} \notag \\
    \text{s.t.} \qquad
    & \sum_{\substack{p \in P:\\ e \in r_{\ell p}}} x_{\ell p} \leq C \, f_\ell \, y_\ell \quad \forall \, \ell \in \mathcal{L}, e \in r_{\ell} \notag \\
    &\sum_{\ell \in \mathcal{L}} x_{\ell p} \leq 1 \quad \forall \, p \in \mathcal{P} \notag \\
    &x_{\ell p} \in \{0,1\} \quad \forall \, p \in \mathcal{P}, \ell \in \mathcal{L} \notag
\end{align*}

Then, we have:
\begin{align*}
    OPT = \max_{\by} \qquad &w(\by) \\
    \text{s.t.} \qquad & \sum_{\ell \in \mathcal{L}} c_\ell y_\ell \leq B\\
    &y_{\ell} \in \{0,1\} \quad \forall\, \ell \in \mathcal{L} \notag
\end{align*}

\begin{proposition}\label{prop:not-submodular}
$w$ is not submodular.
\end{proposition}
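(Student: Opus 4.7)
The plan is to disprove submodularity by exhibiting an explicit small instance of \lpp. Recall that $w$ would be submodular if, for every $S \subseteq T \subseteq \mathcal{L}$ and every $\ell^\star \notin T$, $w(S \cup \{\ell^\star\}) - w(S) \geq w(T \cup \{\ell^\star\}) - w(T)$. I will construct a concrete instance together with a triple $(S, T, \ell^\star)$ that strictly reverses this inequality.

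The key is to exploit the \emph{per-edge} capacity structure of \lpp, which differentiates it from classical capacitated facility location (where the welfare function is submodular, as noted above). Concretely, I introduce a ``backbone'' line $\ell$ whose route consists of three consecutive edges $e_1, e_2, e_3$, each of capacity one, together with three passengers $p_1, p_2, p_3$ that all have value $1$ on $\ell$, with on-route edge usage $\{e_1\}$, $\{e_1, e_2\}$, and $\{e_2, e_3\}$ respectively. Then $p_1$ and $p_3$ are edge-disjoint while $p_2$ conflicts with each of them on a single edge, so the unique maximum packing on $\ell$ alone has value $2$, given by $\{p_1, p_3\}$. I then add two auxiliary single-passenger lines: $\ell_1$ covering only $p_1$ (value $1$) and $\ell_2$ covering only $p_3$ (value $1$).

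Taking $S = \{\ell\}$, $T = \{\ell, \ell_1\}$, and $\ell^\star = \ell_2$, a short case check yields $w(S) = w(T) = w(S \cup \{\ell^\star\}) = 2$: any feasible assignment that does not simultaneously use both $\ell_1$ and $\ell_2$ must keep at least one of $p_1, p_3$ on $\ell$, which blocks $e_1$ or $e_2$ and prevents $p_2$ from fitting on $\ell$. In contrast, $w(T \cup \{\ell^\star\}) = 3$, achieved by the assignment $p_1 \mapsto \ell_1$, $p_2 \mapsto \ell$, $p_3 \mapsto \ell_2$, which is feasible since $p_2$'s sub-route uses only the now-empty edges $e_1, e_2$ of $\ell$. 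Consequently the marginal gain of $\ell^\star$ is $0$ on $S$ but $1$ on $T$, contradicting submodularity.

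The main design difficulty, rather than the verification, is to pinpoint what extra structure of \lpp{} breaks submodularity in the first place: here, the complementarity between $\ell_1$ and $\ell_2$ arises because neither alone frees up enough of $\ell$'s route for the ``middle'' passenger $p_2$, whereas opening both simultaneously frees both conflicting edges. Once this structure is in place, the case analysis on the four welfare values is routine.
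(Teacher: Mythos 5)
Your proof is correct and takes essentially the same approach as the paper: an explicit three-passenger, three-line counterexample built around one backbone line carrying two edge-disjoint passengers plus a ``blocking'' passenger who conflicts with both, together with two single-passenger offload lines whose complementarity violates the diminishing-returns inequality. The only cosmetic difference is that the paper's blocking passenger uses \emph{all} edges of the backbone route rather than one conflicting edge with each of the other two; the logic is identical.
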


Another common approach is to develop an approximation algorithm based on an LP relaxation of the ILP. Proposition~\ref{prop:integrality-gap} however shows that such an approach can give strictly worse bounds than the $1-\frac1e$ benchmark.

\begin{proposition}\label{prop:integrality-gap}
The worst-case integrality gap for $(P)$ is no better than $\frac12$.
\end{proposition}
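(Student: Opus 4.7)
\textbf{Proof plan for Proposition~\ref{prop:integrality-gap}.} My plan is to exhibit a family of instances, parametrized by $\delta > 0$, in which the LP optimum exceeds the IP optimum by a factor approaching $2$. The underlying intuition is that in the IP one must pay the full cost of a line to use \emph{any} of its capacity, whereas in the LP fractionally opening a line at $y_\ell = \alpha$ costs only $\alpha c_\ell$ and makes $\alpha C f_\ell$ units of capacity available, so if a line's value grows in proportion to its capacity, the LP can spread a unit of budget across multiple lines without loss.

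Concretely, the plan is to construct a graph with two edge-disjoint single-edge routes $r_1, r_2$ (each of cost $\tau_{\min}$, say), giving two candidate lines $\ell_1 = (r_1, 1)$ and $\ell_2 = (r_2, 1)$ of frequency one and bus capacity $C$. Set $B = 1$ and $c_{\ell_1} = c_{\ell_2} = (1+\delta)/2$. Now introduce $2C$ passengers: $C$ passengers whose source and destination coincide with the endpoints of $r_1$ and who have value $1$ for the trip option using $\ell_1$ (and $0$ for $\ell_2$), plus $C$ symmetric passengers for $\ell_2$. This satisfies Assumptions~1 and~2 as well as all structural hypotheses on $c_\ell$ and $v_p$.

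The next step is to compute the two optima. In the IP, because $c_{\ell_1} + c_{\ell_2} = 1+\delta > B$, at most one line can be opened; once one line is opened, its capacity $C$ is just enough to serve the $C$ passengers who value it, yielding $\mathrm{OPT}_{\mathrm{IP}} = C$. In the LP relaxation, I will verify feasibility of the assignment $y_{\ell_1} = y_{\ell_2} = 1/(1+\delta)$ together with $x_{\ell_i p} = 1/(1+\delta)$ for each of the $C$ passengers valuing $\ell_i$. This exactly meets the budget constraint~\eqref{eq:budget-constraint}, satisfies the capacity constraint~\eqref{eq:capacity-constraint} with equality (both sides equal $C/(1+\delta)$), and trivially respects~\eqref{eq:transfer-constraint} since each passenger is assigned fractionally to a single line. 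The resulting objective is $2C/(1+\delta)$, so $\mathrm{OPT}_{\mathrm{IP}}/\mathrm{OPT}_{\mathrm{LP}} \le (1+\delta)/2$, and letting $\delta \downarrow 0$ shows the integrality gap is no better than $1/2$.

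The construction itself is the only nontrivial step; once the instance is written down, everything reduces to checking feasibility and plugging into the objective. The subtlety to watch for is making sure the candidate line set and value function are legitimate under the paper's model (in particular that $v_p$ is nondecreasing in frequency, which holds trivially since each line has a unique frequency here, and that trip optimality is respected, which is automatic because each of the $2C$ passengers' source/destination pair lies on a single edge of the chosen route).
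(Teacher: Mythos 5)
Your proposal is correct and takes essentially the same approach as the paper: two edge-disjoint lines each valued only by its own group of passengers, a budget that integrally permits opening just one line but fractionally permits (nearly) both, yielding an LP/IP ratio tending to $2$. The paper's version is a slightly more compact instance (one passenger per line, costs $1$ each, budget $2-\varepsilon$), but the feasibility checks and the limiting argument are the same.
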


\section{Main result}
\label{sec:main-result}

In this section, we design an approximation algorithm for the \lineplanningproblem{} that achieves at least $1-\frac1e-\varepsilon$ fraction of the optimal solution in expectation, and produces a solution whose cost is budget-respecting with high probability, as the \platform's budget grows large.

{Our high-level approach is as follows. We first formulate the \lineplanningproblem{} as a configuration ILP, and solve a conservative LP relaxation of this latter program, in the sense that it has a stricter budget than the \platform's true budget $B$. We then use a variant of the rounding scheme developed by~\citet{SAP} to produce an approximately feasible integer solution. The key difficulty in such an approach is approximating the exponential-size configuration LP without incurring too much of a loss. Our main contribution in this respect is to show that the structure of \lpp{} allows us to solve it {\it exactly} in polynomial-time by leveraging the additional structure of our problem in the dual space. Throughout the rest of the section, we defer the proofs of auxiliary facts to Appendix~\ref{app:main-result-proofs}.}


\subsection{An exponential-size configuration ILP}


Consider line $\ell$, and let $\mathcal{I}_{\ell}$ denote the family of all feasible assignments of passengers to $\ell$, where a feasible assignment is such that, for all $e \in r_{\ell}$ the total number of passengers using $e$ does not exceed the capacity of the line. We use $S$ to denote any such assignment in $\mathcal{I}_{\ell}$. $X_{\ell S}$ is the indicator variable representing whether or not the set of passengers $S$ is chosen for line $\ell$. Formally, $S \in \mathcal{I}_{\ell}$ satisfies $\sum\limits_{\substack{p \in S:\\e \in r_{\ell p}}} X_{\ell S} \leq C f_{\ell}$ for all $e \in E$. {Example~\ref{ex:SAP} illustrates this notation.

\begin{example}\label{ex:SAP}
Consider lines $\ell_1, \ell_2$ and passengers $p_1, p_2$, with $p_1$ and $p_2$ using the same edges of each line. If $C = 2$, then $\mathcal{I}_{\ell_i} = \left\{\{p_1\}, \{p_2\}, \{p_1, p_2\}\right\}$ for $i \in \{1,2\}$. If $C = 1$,  then $\mathcal{I}_{\ell_i} = \left\{\{p_1\}, \{p_2\}\right\}$ for $i \in \{1,2\}$.
\end{example}

}


We can now represent {\lpp} as the following exponential-size integer program:
\begin{align}
{\widehat{P}:=} \qquad \max_{\left\{X_{\ell S}\right\}} \qquad
&  \sum_{p \in \mathcal{P}} \; \sum_{\ell\in \mathcal{L}} \sum_{\substack{S \in \mathcal{I}_{\ell}:\\p\in S}}v_{\ell p} X_{\ell S}  \notag\\
 \text{s.t.}\qquad & \sum_{\ell\in \mathcal{L}} c_\ell\left( \sum_{S\in \mathcal{I}_\ell} X_{\ell S}\right) \leq B \label{budget-config-lp-1}\\
 &    \sum_{S\in \mathcal{I}_\ell} X_{\ell S} \leq 1\qquad\qquad \forall \, \ell \in \mathcal{L} \label{one-set-per-line-1} \\
 &\sum_{\ell\in \mathcal{L}} \sum_{\substack{S \in \mathcal{I}_{\ell}:\\p\in S}} X_{\ell S} \leq 1 \qquad \forall \, p \in \mathcal{P} \label{cust-to-one-set-1} \\
&  X_{\ell S} \in \{0,1\} \qquad\qquad \forall \, \ell\in \mathcal{L}, S \in \mathcal{I}_{\ell}\notag
\end{align}

Constraint~\eqref{one-set-per-line-1} requires that only one set of passengers be chosen for each line, and Constraint~\eqref{cust-to-one-set-1} ensures that each passenger is only assigned to one line. If a set of passengers is assigned to line $\ell$, that is, if $\sum_{S \in \mathcal{I}_{\ell}} X_{\ell S} > 0$, then $\ell$ is opened and the \platform{} incurs cost $c_{\ell}$; else, $\ell$ is not created and no cost is incurred. Let $OPT$ denote the optimal value of $\widehat{P}$.


\subsection{Approximating the exponential-size ILP}

{For a given constant $\varepsilon \in (0,\frac12)$, Algorithm~\ref{alg:rounding} makes use of the following auxiliary configuration LP, which we denote $\widehat{P}^{(\varepsilon)}$.}

{\begin{align}
{\,\widehat{P}^{(\varepsilon)}\,:=} \qquad \max_{\left\{X_{\ell S}\right\}} \qquad
&  \sum_{p \in \mathcal{P}} \; \sum_{\ell\in \mathcal{L}} \sum_{\substack{S \in \mathcal{I}_{\ell}:\\p\in S}}v_{\ell p} X_{\ell S}  \notag\\
 \text{s.t.}\qquad & \sum_{\ell\in \mathcal{L}} c_\ell\left( \sum_{S\in \mathcal{I}_\ell} X_{\ell S}\right) \leq B(1-\varepsilon) \label{budget-constraint-relaxation}\\
 &    \sum_{S\in \mathcal{I}_\ell} X_{\ell S} \leq 1\qquad\qquad \forall \, \ell \in \mathcal{L} \label{one-set-per-line-relaxation} \\
 &\sum_{\ell\in \mathcal{L}} \sum_{\substack{S \in \mathcal{I}_{\ell}:\\p\in S}} X_{\ell S} \leq 1 \qquad \forall \, p \in \mathcal{P} \label{one-passenger-per-set-relaxation}  \\
&  X_{\ell S} \in [0,1] \qquad\qquad \forall \, \ell\in \mathcal{L}, S \in \mathcal{I}_{\ell}\notag
\end{align}}

Let ${OPT}^{(\varepsilon)}$ denote the optimal value of $\widehat{P}^{(\varepsilon)}$, and $\left\{X_{\ell S}^{(\varepsilon)}\right\}$ its optimal solution.  
Algorithm~\ref{alg:rounding} presents a high-level description of our algorithm. 

\begin{algorithm}
\begin{algorithmic}
\Require {$G = (V,E), \mathcal{P},\mathcal{L},\left\{\mathcal{I}_{\ell}\right\}_{\ell\in\mathcal{L}},  \varepsilon \in (0,\frac12)$}
\Ensure {set of lines to open, passenger assignment to each line}
\State Compute $v_{\ell p}$ for all $\ell \in \mathcal{L}, p \in \mathcal{P}$.
\State Solve $\widehat{P}^{(\varepsilon)}$.
\State \textbf{Rounding:} For all $\ell \in \mathcal{L}, S \in \mathcal{I}_{\ell}$ such that $X_{\ell S}^{(\varepsilon)} > 0$, {open $\ell$ and}, independently for each line $\ell$, assign $S$ to $\ell$ with probability $X_{\ell S}^{(\varepsilon)}$.
\State \textbf{Re-assignment:} If passenger $p$ is  assigned to multiple lines, choose the line maximizing $v_{\ell p}$. Close all lines for which no passengers are any longer assigned.
\State \textbf{Aggregation:} {If there exist open lines $\ell_1, \ell_2$ such that $r_{\ell_1} = r_{\ell_2} = r$ and $f_{\ell_1} \neq f_{\ell_2}$, close $\ell_1$ and $\ell_2$ and open $\ell^\prime = (r, f_{\ell_1} + f_{\ell_2}).$ Assign all passengers formerly using $\ell_1$ or $\ell_2$ to $\ell^{\prime}$.}
\end{algorithmic}
	\caption{Randomized rounding for \lpp\label{alg:rounding}}
\end{algorithm}

Let $ALG$ denote the expected value of the solution returned by Algorithm~\ref{alg:rounding}. Theorem~\ref{thm:main-thm} establishes our main result.

\begin{theorem}\label{thm:main-thm}
Algorithm~\ref{alg:rounding} respects the budget in expectation, and is of cost no more than $B$ with probability at least $1 -e^{-\varepsilon^2B/3c_{\max}}$,
where  $c_{\max} = \max_{\ell \in \mathcal{L}} c_\ell$. Moreover,
$$ALG \geq \left(1-\frac1e-\varepsilon\right)OPT.$$
\end{theorem}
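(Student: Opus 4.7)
The plan is to decompose the theorem into a budget analysis and a welfare analysis, run them in parallel, and close by relating the LP value $OPT^{(\varepsilon)}$ to the true optimum $OPT$ via a simple scaling argument. The per-passenger analysis will follow the Separable Assignment Problem rounding of Fleischer et al., while the budget guarantee will come from a Chernoff bound on independent line openings.

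\textbf{Budget.} Let $Y_\ell \in \{0,1\}$ indicate that line $\ell$ is assigned some set $S$ by the Rounding step. Since at most one set is chosen per line, $Y_\ell$ is Bernoulli with mean $q_\ell := \sum_{S\in\mathcal{I}_\ell} X_{\ell S}^{(\varepsilon)}$, and the $Y_\ell$ are independent across $\ell$. The actual cost of Algorithm~\ref{alg:rounding} is upper bounded by $Z := \sum_\ell c_\ell Y_\ell$: Re-assignment only closes lines, and Aggregation replaces a pair $(\ell_1,\ell_2)$ sharing a route by a single line of cost at most $c_{\ell_1}+c_{\ell_2}$ by subadditivity of $c$. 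Taking expectations and using the budget constraint of $\widehat{P}^{(\varepsilon)}$ gives $\EE[Z] \leq B(1-\varepsilon) \leq B$. For the tail, rescale to $Z/c_{\max}$, a sum of independent $[0,1]$-valued random variables with mean at most $\mu := B(1-\varepsilon)/c_{\max}$, and apply the multiplicative Chernoff bound with $\delta = \varepsilon/(1-\varepsilon)$ so that $(1+\delta)\mu = B/c_{\max}$; this yields $\PP[Z>B] \leq \exp(-\delta^2 \mu/3) = \exp\!\bigl(-\varepsilon^2 B/[3(1-\varepsilon)c_{\max}]\bigr) \leq \exp(-\varepsilon^2 B/(3c_{\max}))$.

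\textbf{Welfare.} Fix a passenger $p$ and set $z_{\ell p} := \sum_{S\in\mathcal{I}_\ell,\, p\in S} X_{\ell S}^{(\varepsilon)}$; Rounding includes $p$ in the set chosen for line $\ell$ with probability $z_{\ell p}$, independently across $\ell$, and the per-passenger constraint of $\widehat{P}^{(\varepsilon)}$ forces $\sum_\ell z_{\ell p} \leq 1$. Let $V_p$ denote $p$'s realized value after Re-assignment. Order the lines with $v_{\ell p}>0$ in decreasing value as $\ell_1,\ldots,\ell_k$; since $V_p \geq v_{\ell_j p}$ exactly when at least one of $\ell_1,\ldots,\ell_j$ includes $p$, summation by parts gives
\begin{equation*}
\EE[V_p] \;=\; \sum_{j=1}^k (v_{\ell_j p}-v_{\ell_{j+1} p})\Bigl(1-\prod_{i\leq j}(1-z_{\ell_i p})\Bigr), \qquad v_{\ell_{k+1} p}:=0.
\end{equation*}
Using $1-\prod_i(1-z_i) \geq 1-e^{-\sum_i z_i} \geq (1-1/e)\sum_i z_i$ (valid since $\sum_i z_i\leq 1$) followed by a second summation by parts collapses this to $\EE[V_p] \geq (1-1/e)\sum_\ell v_{\ell p} z_{\ell p}$. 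Because Aggregation only merges lines sharing a route and $v_{\ell p}$ is non-decreasing in frequency, it weakly increases each passenger's value, so summing over $p$ yields $ALG \geq (1-1/e)\,OPT^{(\varepsilon)}$.

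\textbf{Closing the loop.} The LP relaxation of $\widehat{P}$ has value at least $OPT$; scaling its optimum coordinate-wise by $(1-\varepsilon)$ produces a feasible solution to $\widehat{P}^{(\varepsilon)}$, so $OPT^{(\varepsilon)} \geq (1-\varepsilon) OPT$. Chaining the two inequalities gives $ALG \geq (1-1/e)(1-\varepsilon)OPT \geq (1-1/e-\varepsilon)OPT$. The per-passenger arithmetic above is routine once one has the right template, so I expect the only genuine obstacle to lie elsewhere, in an implicit prerequisite to running Algorithm~\ref{alg:rounding}: one must show that $\widehat{P}^{(\varepsilon)}$ can be solved (or approximated to within a $1-\varepsilon$ factor) in polynomial time despite its exponentially many variables. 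That step presumably requires a tailored separation oracle for the dual that exploits the polytope structure of the families $\mathcal{I}_\ell$, and is where the paper's real technical content likely sits.
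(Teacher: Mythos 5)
Your proposal is correct and follows essentially the same route as the paper: scale the configuration LP optimum to get $OPT^{(\varepsilon)}\geq(1-\varepsilon)OPT$, apply the Fleischer et al.\ per-passenger rounding analysis (which you re-derive via summation by parts rather than citing the lemma) to lose only a $1-\frac1e$ factor, argue aggregation is cost- and value-monotone via subadditivity and frequency-monotonicity, and close the budget tail with a Chernoff bound at $\delta=\varepsilon/(1-\varepsilon)$. You also correctly identify that the remaining technical content — an exact polynomial-time separation oracle for the dual of $\widehat{P}^{(\varepsilon)}$, which the paper obtains from total unimodularity of the per-line assignment LP — lives outside the statement being proved.
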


{Note that the choice of $\varepsilon$ trades off between quality of approximation and feasibility of the rounded solution: as $\varepsilon$ increases, the solution is exponentially more likely to be budget-respecting; on the other hand, we lose $\varepsilon$-fraction of the optimum in terms of the approximation guarantee.}

{To prove Theorem~\ref{thm:main-thm}, we establish the following facts, which characterize the loss incurred in each step of the algorithm:
\begin{enumerate}[$(i)$]
    \item\label{f5} $OPT^{(\varepsilon)} \geq (1-\varepsilon)OPT$ (Proposition~\ref{prop:loss-from-epsilon}).
    \item $\widehat{P}^{(\varepsilon)}$ can be solved in polynomial time (Theorem~\ref{thm:hardness-many-transfers});
    \item\label{f4} the loss from rounding and re-assignment is at most $\frac1e$ fraction of the optimal value of $\widehat{P}^{(\varepsilon)}$ (Proposition~\ref{thm:rounding_value});
    \item\label{f1} the aggregation step maintains a feasible assignment of passengers to lines, and neither increases the cost of the solution nor decreases the objective (Proposition~\ref{prop:final-step-doesnt-matter});
    \item\label{f2} the cost of the final solution respects the \platform's budget with high probability (Corollary~\ref{cor:cost});
\end{enumerate}}

We first show that the loss incurred from solving the auxiliary LP is not too large.
\begin{proposition}\label{prop:loss-from-epsilon}
For all $\varepsilon \in [0,1]$, $$OPT^{(\varepsilon)} \geq (1-\varepsilon)OPT.$$
\end{proposition}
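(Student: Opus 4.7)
The strategy is to exhibit a feasible solution to $\widehat{P}^{(\varepsilon)}$ whose objective value is at least $(1-\varepsilon)OPT$; since $OPT^{(\varepsilon)}$ is a maximum, this suffices. Let $\{X_{\ell S}^\star\}$ be any optimal solution of $\widehat{P}$, so that $\sum_{p,\ell,S\ni p} v_{\ell p}X_{\ell S}^\star = OPT$. Define $\widetilde{X}_{\ell S} := (1-\varepsilon)\,X_{\ell S}^\star$ for every $\ell \in \mathcal{L}$ and $S \in \mathcal{I}_\ell$.

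The bulk of the argument is a routine feasibility check. For the budget constraint \eqref{budget-constraint-relaxation}, scaling both sides of \eqref{budget-config-lp-1} by $(1-\varepsilon)$ yields $\sum_{\ell} c_\ell \sum_{S} \widetilde{X}_{\ell S} \leq (1-\varepsilon)B$, which is exactly the tightened budget. Constraints \eqref{one-set-per-line-relaxation} and \eqref{one-passenger-per-set-relaxation} get strictly easier: each sum is upper bounded by $(1-\varepsilon)\cdot 1 \leq 1$. Finally, $\widetilde{X}_{\ell S} \in [0, 1-\varepsilon] \subseteq [0,1]$, so the relaxed box constraints hold. Hence $\widetilde{X}$ is feasible for $\widehat{P}^{(\varepsilon)}$.

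Because the objective is linear in $X$, evaluating it at $\widetilde{X}$ gives exactly $(1-\varepsilon)\,OPT$, so $OPT^{(\varepsilon)} \geq (1-\varepsilon)OPT$. There is no real obstacle: the only subtlety is that we are comparing an integer program's optimum to an LP with a tighter budget, but scaling the integral optimum by $(1-\varepsilon)$ simultaneously absorbs the extra $\varepsilon B$ slack in the budget and keeps all per-line and per-passenger allocation constraints satisfied a fortiori. The edge case $\varepsilon \in \{0,1\}$ is immediate (at $\varepsilon = 0$ the two programs differ only by relaxation, and at $\varepsilon = 1$ the claim is vacuous).
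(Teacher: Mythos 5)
Your proof is correct and follows essentially the same argument as the paper: scale a known optimal solution by $(1-\varepsilon)$, check feasibility for $\widehat{P}^{(\varepsilon)}$ constraint by constraint, and use linearity of the objective. The only (immaterial) difference is that the paper scales the optimal solution of the LP relaxation $\widehat{P}^{(0)}$ and then chains $OPT^{(\varepsilon)} \geq (1-\varepsilon)OPT^{(0)} \geq (1-\varepsilon)OPT$, whereas you scale the integral optimum of $\widehat{P}$ directly; both yield the stated bound.
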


\begin{proof}
Let $\{X_{\ell S}^{(0)}\}$ denote the optimal solution to $\widehat{P}^{(0)}$. Observe that $\{(1-\varepsilon)X_{\ell S}^{(0)}\}$ is feasible for the problem $\widehat{P}^{(\varepsilon)}$, and that the objective of $\widehat{P}^{(\varepsilon)}$ evaluated at this feasible solution is: $$(1-\varepsilon)\sum_{p \in \mathcal{P}}\sum_{\ell\in\mathcal{L}}\sum_{\substack{S \in \mathcal{I}_{\ell}:\\ p \in S}} v_{\ell p}X_{\ell S}^{(0)} = (1-\varepsilon) OPT^{(0)}$$

Observe moreover that $\widehat{P}^{(0)}$ corresponds to the LP relaxation of $\widehat{P}$, and thus $OPT^{(0)} \geq OPT$. Chaining these two inequalities together we obtain the fact. 
\end{proof}

We next observe that Algorithm~\ref{alg:rounding} is underdetermined as defined. In particular, it is a priori unclear how, if at all, one can efficiently solve $\widehat{P}^{(\varepsilon)}$ in polynomial time, or if the best we can hope for is an approximation. Our key contribution is showing that this can in fact efficiently be done, and as a result the only losses potentially incurred by the algorithm come from the rounding, re-assignment, and aggregation steps.
\begin{theorem}\label{thm:poly-time-separation}
$\widehat{P}^{(\varepsilon)}$ can be solved in polynomial time.
\end{theorem}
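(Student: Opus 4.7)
The plan is to solve $\widehat{P}^{(\varepsilon)}$ by running the ellipsoid method on its dual, and to exhibit a polynomial-time separation oracle by exploiting the interval structure of the per-line subproblem. Introducing nonnegative multipliers $\mu$ for the budget constraint \eqref{budget-constraint-relaxation}, $\lambda_\ell$ for each line constraint \eqref{one-set-per-line-relaxation}, and $\pi_p$ for each passenger constraint \eqref{one-passenger-per-set-relaxation}, the dual reads: minimize $B(1-\varepsilon)\mu + \sum_\ell \lambda_\ell + \sum_p \pi_p$ subject to $\lambda_\ell + c_\ell \mu \geq \sum_{p \in S}(v_{\ell p}-\pi_p)$ for every $\ell \in \mathcal{L}$ and $S \in \mathcal{I}_\ell$. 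This has polynomially many variables by Assumption~2, and exponentially many constraints; solving it in polynomial time reduces to separation.

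Given a candidate $(\mu,\lambda,\pi)$, the separation task decouples across lines: for each $\ell$, set $w_p := v_{\ell p} - \pi_p$ (discarding passengers with $w_p \leq 0$) and solve
\begin{equation*}
M_\ell \;:=\; \max_{S \in \mathcal{I}_\ell}\, \sum_{p \in S} w_p,
\end{equation*}
returning $S$ as a violated constraint iff $M_\ell > \lambda_\ell + c_\ell\mu$. A violation anywhere certifies infeasibility; if no $\ell$ violates, $(\mu,\lambda,\pi)$ is feasible.

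The key observation is that this per-line subproblem is a weighted interval packing instance: along the ordered sequence of edges of $r_\ell$, each passenger $p$ consumes the contiguous sub-route $r_{\ell p}$, and the sole constraint is that the number of selected passengers covering any fixed edge is at most $Cf_\ell$. The incidence matrix of intervals versus points on a line has the consecutive-ones property and is therefore totally balanced, so the natural LP relaxation of this packing problem has integral optimal vertices and can be solved in polynomial time; equivalently, it can be cast as a min-cost flow on a path graph with per-node capacity $Cf_\ell$, which is classically polynomial. Since the number of lines is polynomial in $n$ (Assumption~2), the whole separation oracle runs in polynomial time.

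Plugging this oracle into the ellipsoid method solves the dual in polynomial time; standard equivalence between optimization and separation then recovers an optimal primal $\{X_{\ell S}^{(\varepsilon)}\}$ supported on polynomially many configurations (namely those whose dual constraints were ever invoked by the oracle, which can be re-solved as a polynomial-size LP). The main obstacle I anticipate is cleanly justifying the integrality/polynomial-time solvability of the per-line subproblem; once the interval-on-a-path structure of $\{r_{\ell p}\}_{p}$ inside $r_\ell$ is made explicit, this reduces to a textbook fact, and the rest is a standard ellipsoid-based argument.
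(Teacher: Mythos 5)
Your proposal is correct and follows essentially the same route as the paper: dualize, run the ellipsoid method with a per-line separation oracle, and argue that the per-line packing LP is integral because the passenger-edge incidence matrix has the consecutive-ones property (the paper invokes total unimodularity of interval matrices, which is the cleaner citation than total balancedness, but both yield the needed integrality). The recovery of the primal from the polynomially many invoked dual constraints also matches the paper's argument.
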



\begin{proof}
Since $\widehat{P}^{(\varepsilon)}$ has an exponential number of variables but only a polynomial number of constraints (in the number of passengers and lines, and hence in $n$), its dual has polynomially many variables, and as such can be solved in polynomial time via the ellipsoid method, {\it assuming access to a polynomial-time separation oracle}~{\citep{bland1981ellipsoid}}. {Given this, one can obtain an optimal primal solution by solving the primal problem with only the variables corresponding to the dual constraints present when the ellipsoid method has terminated (of which there are polynomially many, since the ellipsoid method only makes a polynomial number of calls to the separation oracle)~\citep{carr2000randomized}. Thus, it suffices to design a separation oracle which runs in polynomial time.} 

Let $\widehat{D}^{(\varepsilon)}$ denote the dual of $\widehat{P}^{(\varepsilon)}$, with $\alpha, \{q_{\ell}\}, \{\lambda_p\}$ the dual variables corresponding{ to constraints~\eqref{budget-constraint-relaxation}, \eqref{one-set-per-line-relaxation} and \eqref{one-passenger-per-set-relaxation}, respectively.} The dual is given by:
\begin{align*}
\widehat{D}^{(\varepsilon)} := \qquad \min_{\substack{\{q_{\ell}\}, \{\lambda_p\}, \alpha}}
 \qquad & \sum_{\ell \in \mathcal{L}} q_\ell + \sum_{p \in \mathcal{P}} \lambda_p +B(1-\varepsilon)\alpha\\
 \text{s.t.} \qquad  &   q_\ell + \alpha c_\ell \geq \sum_{p\in S} \left(v_{\ell p} - \lambda_p\right) \qquad \forall \, \ell \in \mathcal{L}, S\in \mathcal{I}_\ell \\
 &    q_\ell \geq 0 \quad \forall \, \ell \in \mathcal{L},\quad \lambda_p \geq 0 \quad \forall \, p \in \mathcal{P}, \quad \alpha \geq 0
\end{align*}

For all $\ell\in\mathcal{L}$, let $\mathcal{F}_{\ell}$ denote the polytope defined by the set of constraints:
\begin{equation*}
    q_\ell + \alpha c_\ell \geq \sum_{p\in S} (v_{\ell p}-\lambda_p) \qquad \forall \, S\in \mathcal{I}_\ell
\end{equation*}
It suffices to show that we can design a polynomial time separation algorithm for the polytope $\mathcal{F}_{\ell}$. That is, given $q_{\ell}, \alpha,$ and $\left\{\lambda_p\right\}$, the separation algorithm must be able to find a violated constraint for $\mathcal{F}_\ell$ or certify that all constraints in $\mathcal{F}_\ell$ are satisfied.

Algorithm~\ref{alg:separation} formally describes our separation oracle.

\begin{algorithm}
\begin{algorithmic}
\Require {$q_{\ell}, \alpha, \{\lambda_p\}, \mathcal{F}_{\ell}$}
\Ensure {violated constraint for $\mathcal{F}_{\ell}$, or a certification that all constraints in $\mathcal{F}_{\ell}$ are satisfied}
\State Solve the following LP:
\begin{align}
   \max_{\{x_p\}} \qquad & \sum_{p \in \mathcal{P}} (v_{\ell p}-\lambda_p) x_p\notag\\\
\text{s.t.} \qquad &  \sum_{\substack{p \in \mathcal{P}: \\ e \in r_{\ell p}}} x_p \leq Cf_{\ell} \qquad \, \forall \, e \in r_{\ell} \label{single_line_subproblem}\\ &   0 \leq x_p \leq 1 \qquad \forall \, p \in \mathcal{P}.\notag
\end{align}
Let LP-SEP denote its optimal value, and $\{x_p^\star\}$ an optimal solution to this problem.
\State If LP-SEP $\leq q_{\ell} + \alpha c_{\ell}$, then return that all constraints in $\mathcal{F}_{\ell}$ are satisfied. Else, return $S^\star = \{p:x_{p}^\star > 0\}$.
\end{algorithmic}
	\caption{Separation Algorithm for the Ellipsoid Method\label{alg:separation}}
\end{algorithm}

Our separation algorithm solves an LP with polynomially many variables and constraints, and as such runs in polynomial time.{\footnote{We note that, given a dual solution, one can efficiently find a primal solution, as observed by~\citet{carr2000randomized}.}} However, correctness of the algorithm is not immediate: the LP is a relaxation of the set problem we are interested in, and as such $\sum_{p} \left(v_{\ell p} - \lambda_p\right) x_p^\star \geq \max_{S \in \mathcal{I}_{\ell}} \sum_{p \in S}\left(v_{\ell p} - \lambda_p\right)$. If this inequality was strict, the separation algorithm would incorrectly return that a constraint has been violated, when in fact all have been satisfied. {Observe that this would only occur if $\{x^\star_p\}$ were fractional; the separation algorithm we propose, however, is a capacitated variant of the assignment problem, for which the linear programming relaxation is known to admit an integral solution~\citep{bertsekas1991linear}.} Lemma~\ref{lem:correctness} formalizes this high-level intuition, and thus establishes that this inequality is in fact always tight. 

This then concludes the proof of the fact that $\widehat{P}^{(\varepsilon)}$ is poly-time solvable.
\begin{lemma}\label{lem:correctness}
$\{x_p^\star\}$ is integral. Thus,
$$\sum_{p} \left(v_{\ell p} - \lambda_p\right)x_p^\star = \max_{S \in \mathcal{I}_{\ell}} \sum_{p \in S}\left(v_{\ell p} - \lambda_p\right).$$
\end{lemma}
\end{proof}

Proposition~\ref{thm:rounding_value} establishes the loss incurred from the rounding step, and follows from~\cite{SAP}. For the sake of completeness, we include the proof in Appendix~\ref{app:main-result-proofs}. 

\begin{proposition}\label{thm:rounding_value}
Let $\widetilde{ALG}$ denote the value of the solution immediately after the re-assignment step. Then, $\widetilde{ALG} \geq (1-\frac1e)OPT^{(\varepsilon)}$.
\end{proposition}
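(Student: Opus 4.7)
\textbf{Proof plan for Proposition~\ref{thm:rounding_value}.} My plan is to analyze the expected value collected from each passenger independently and then sum, following the template of the SAP analysis. First, for each line $\ell$ and passenger $p$, define $y_{\ell p} := \sum_{S\in\mathcal{I}_\ell:\, p\in S} X_{\ell S}^{(\varepsilon)}$. By inspection of the objective and of constraint~\eqref{one-passenger-per-set-relaxation}, one has $OPT^{(\varepsilon)} = \sum_p \sum_\ell v_{\ell p}\, y_{\ell p}$ and $\sum_\ell y_{\ell p}\le 1$ for every $p$. Since in the rounding step each line independently picks at most one set $S$ with probability $X_{\ell S}^{(\varepsilon)}$, the marginal probability that $p$ is picked by line $\ell$ equals $y_{\ell p}$, and the events ``$p$ is picked by $\ell$'' are mutually independent across $\ell$.

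Next, fix a passenger $p$ and relabel the lines as $\ell^{(1)},\ell^{(2)},\ldots$ so that $v^{(1)}\ge v^{(2)}\ge\cdots$, where $v^{(i)}=v_{\ell^{(i)} p}$ and $y^{(i)}=y_{\ell^{(i)} p}$. Under the re-assignment rule the passenger is matched to the covering line of largest value, so she is surely matched to $\ell^{(i)}$ whenever the smallest index $i^\star$ among the covering lines equals $i$. Since value is monotone in this index, the expected value collected from $p$ satisfies
\begin{equation*}
\EE[V_p]\;\ge\;\sum_i v^{(i)}\, y^{(i)} \prod_{j<i}(1-y^{(j)}).
\end{equation*}

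The core analytic step is then the deterministic inequality
\begin{equation*}
\sum_i v^{(i)} y^{(i)} \prod_{j<i}(1-y^{(j)}) \;\ge\; \Bigl(1-\tfrac{1}{e}\Bigr)\sum_i v^{(i)} y^{(i)},
\end{equation*}
valid for any non-increasing non-negative $(v^{(i)})$ and $(y^{(i)})\in[0,1]$ with $\sum y^{(i)}\le 1$. I would prove this by Abel summation, writing $v^{(i)}=\sum_{m\ge i}(v^{(m)}-v^{(m+1)})$ with $v^{(k+1)}=0$, which reduces the claim to the prefix inequality $1-\prod_{i\le m}(1-y^{(i)})\ge (1-1/e)\sum_{i\le m} y^{(i)}$ for every $m$. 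Letting $s=\sum_{i\le m}y^{(i)}\le 1$, AM--GM gives $\prod_{i\le m}(1-y^{(i)})\le (1-s/m)^m\le e^{-s}$, and concavity of $f(s)=1-e^{-s}-(1-1/e)s$ on $[0,1]$ (with $f(0)=f(1)=0$) yields $1-e^{-s}\ge(1-1/e)s$. I expect this analytic lemma to be the main (though entirely standard) obstacle; the rest is bookkeeping.

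Finally, summing the per-passenger bound over $p\in\mathcal{P}$,
\begin{equation*}
\widetilde{ALG} \;=\; \sum_p \EE[V_p] \;\ge\; \Bigl(1-\tfrac{1}{e}\Bigr)\sum_p \sum_\ell v_{\ell p}\, y_{\ell p} \;=\; \Bigl(1-\tfrac{1}{e}\Bigr) OPT^{(\varepsilon)},
\end{equation*}
which is the claimed bound. Note that this argument only uses the per-line independence of the rounding and the LP constraint $\sum_\ell y_{\ell p}\le 1$; it does \emph{not} use the budget or per-edge capacity constraints, which is consistent with the fact that those constraints are handled separately (the capacity constraint inside each $\mathcal{I}_\ell$ is preserved exactly by rounding, and the budget is controlled by the relaxation in~\eqref{budget-constraint-relaxation} together with the concentration bound of Corollary~\ref{cor:cost}).
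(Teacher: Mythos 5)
Your proposal is correct and follows essentially the same route as the paper: the same per-passenger marginals $y_{\ell p}=\sum_{S\in\mathcal{I}_\ell:\,p\in S}X^{(\varepsilon)}_{\ell S}$, the same ordering of lines by decreasing value, and the same product formula for the assignment probabilities after re-assignment, summed over passengers at the end. The only difference is that you prove the key analytic inequality yourself (via Abel summation, AM--GM, and concavity of $1-e^{-s}-(1-1/e)s$), whereas the paper imports it as Lemma~\ref{lem:main-techical-lemma} from \citet{SAP} in the marginally sharper form $1-(1-1/L)^L$; your self-contained derivation is valid and suffices for the stated $1-\frac1e$ bound.
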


We next show that no additional loss is incurred in the aggregation step of our algorithm.
\begin{proposition}\label{prop:final-step-doesnt-matter}
The aggregation step maintains a feasible assignment of passengers to lines. Moreover, let $\widetilde{ALG}$ denote the value of the solution {\it before} the final aggregation step, and let $\{\widetilde{Y}_{\ell}\}$ and  $\{Y_{\ell}\}$ respectively denote the indicator variables corresponding to whether or not a line was opened, before and after the aggregation step; let $c(\widetilde{Y})$ and $c(Y)$ denote the costs of these respective solutions. Then, ${ALG} \geq \widetilde{ALG}$, and $c(Y) \leq c(\widetilde{Y})$. 
\end{proposition}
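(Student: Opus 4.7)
The plan is to verify each of the three claims---feasibility, objective non-decrease, and cost non-increase---by reducing to exactly one of the structural assumptions on $\mathcal{L}$: additivity of the bus capacity $C f_\ell$ in the frequency, monotonicity of $v_p$ in the frequency, and subadditivity of $c_\ell$ in the frequency, respectively. Since the aggregation step may merge many pairs of lines in sequence, I would first observe that it suffices to prove each claim for a single merge of two open lines $\ell_1, \ell_2$ sharing a common route $r$ into a new line $\ell' = (r, f_{\ell_1} + f_{\ell_2})$; the general statement follows by iterating.

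For feasibility, there are two constraints to verify. Each passenger formerly assigned to $\ell_1$ or $\ell_2$ is reassigned to $\ell'$; by construction of the re-assignment step, a passenger cannot have been simultaneously assigned to both $\ell_1$ and $\ell_2$, so the at-most-one-line property is preserved. For the capacity constraint on an edge $e \in r$, the set of passengers using $e$ on $\ell'$ is the disjoint union of those using $e$ on $\ell_1$ and on $\ell_2$, so its size is at most $C f_{\ell_1} + C f_{\ell_2} = C f_{\ell'}$ by the pre-merge feasibility.

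For the objective, since $r_{\ell_1} = r_{\ell_2} = r_{\ell'}$, the feasible trip-option set $\Omega_{\ell p}$ is identical across $\ell_1, \ell_2, \ell'$ for every passenger $p$. Monotonicity of $v_p$ in the frequency then gives $v_p(\omega, \ell') \geq v_p(\omega, \ell_i)$ for every $\omega$ in the common set, and taking the maximum over $\omega$ yields $v_{\ell' p} \geq v_{\ell_i p}$. Summing over affected passengers (those on untouched lines contribute equally to both sides) gives $ALG \geq \widetilde{ALG}$.

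For the cost, subadditivity of $c$ in the frequency yields $c_{\ell'} \leq c_{\ell_1} + c_{\ell_2}$, and replacing the term $c_{\ell_1} + c_{\ell_2}$ by $c_{\ell'}$ in the expression for $c(\widetilde{Y})$ immediately gives $c(Y) \leq c(\widetilde{Y})$. The proposition is essentially bookkeeping; the only point to be careful about is pairing each sub-claim with the correct structural assumption on $\mathcal{L}$, and I anticipate no substantial technical obstacle.
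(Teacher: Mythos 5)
Your proposal is correct and follows essentially the same route as the paper's proof: feasibility via summing the per-edge loads of $\ell_1$ and $\ell_2$ against the combined capacity $C(f_{\ell_1}+f_{\ell_2})$, the objective bound via monotonicity of $v_p$ in the frequency over the shared route, and the cost bound via subadditivity. Your additional remarks on iterating over successive merges and on preservation of the at-most-one-line constraint are minor elaborations of points the paper leaves implicit.
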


\begin{proof}
The fact that the objective weakly increases after the aggregation step follows from the fact that $\ell_1$ and $\ell_2$ share the same route, and $v_p(\cdot)$ is non-decreasing in the line frequency for all $p \in \mathcal{P}$. Moreover, $c(Y) \leq c(\widetilde{Y})$ follows from subadditivity of the cost function.

We now argue that a feasible assignment of passengers to lines is maintained after the aggregation step, i.e., that the bus capacity constraint is not violated for line $\ell' = (r, f_{\ell_1} + f_{\ell_2})$. Let $\left\{X_{\ell p}\right\}$ and $\{\widetilde{X}_{\ell p}\}$ be the indicator variables respectively denoting the assignment of passengers to lines, after and before the aggregation step. For all $e \in r$, we have:
\begin{align*}
    \sum_{p: e \in r_{\ell' p}}X_{\ell' p} \stackrel{(a)}{=} \sum_{p:e \in r_{\ell_1 p}} \widetilde{X}_{\ell_1 p} + \sum_{p:e \in r_{\ell_2 p}} \widetilde{X}_{\ell_2 p} \stackrel{(b)}{\leq} C(f_{\ell_1} + f_{\ell_2}),
\end{align*}
where~$(a)$ follows from the aggregation construction and~$(b)$ follows from the fact that the assignment of passengers to lines {\it before} the aggregation step was feasible by construction, for both $\ell_1$ and $\ell_2$. 
\end{proof}

To complete the proof of the theorem, we characterize the cost of the solution returned by Algorithm~\ref{alg:rounding}. We defer the proof of Proposition~\ref{prop:cost-characterization} to Appendix~\ref{app:main-result-proofs}.

\begin{proposition}\label{prop:cost-characterization}
The solution returned by Algorithm~\ref{alg:rounding} satisfies the budget constraint in expectation. Moreover, for all $\delta \in (0,1]$, the cost of the solution returned by Algorithm~\ref{alg:rounding} is at most $B(1-\varepsilon)(1+\delta)$ with probability at least $1 - e^{-\delta^2(1-\varepsilon)B/3c_{\max}}$.
\end{proposition}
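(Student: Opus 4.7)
The plan is to bound the random cost of the solution produced immediately after the rounding step, and then transfer the bound to the final output by invoking Proposition~\ref{prop:final-step-doesnt-matter}, which guarantees that the subsequent re-assignment step (which can only close lines that end up with no passengers) and aggregation step (which uses subadditivity of $c_\ell$) can only weakly decrease the total cost.

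For each $\ell \in \mathcal{L}$, let $Y_\ell$ denote the indicator that line $\ell$ is opened after rounding. Since for a given $\ell$ at most one $S \in \mathcal{I}_\ell$ is selected, and since rounding is performed independently across lines, the $\{Y_\ell\}$ are independent Bernoulli random variables with $\EE[Y_\ell] = \sum_{S \in \mathcal{I}_\ell} X_{\ell S}^{(\varepsilon)}$. By linearity of expectation and the relaxed budget constraint~\eqref{budget-constraint-relaxation} of $\widehat{P}^{(\varepsilon)}$, the expected post-rounding cost is $\sum_{\ell} c_\ell \sum_{S \in \mathcal{I}_\ell} X_{\ell S}^{(\varepsilon)} \leq B(1-\varepsilon) \leq B$. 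Combined with Proposition~\ref{prop:final-step-doesnt-matter}, this yields the expected-budget guarantee.

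For the high-probability tail bound, I would normalize by setting $Z_\ell := c_\ell Y_\ell / c_{\max} \in [0,1]$ so that the $Z_\ell$ are independent and bounded in $[0,1]$, with $\EE[\sum_\ell Z_\ell] \leq B(1-\varepsilon)/c_{\max} =: \mu_H$. Applying the multiplicative Chernoff bound in its upper-bound form, $\PP[\sum_\ell Z_\ell > (1+\delta)\mu_H] \leq e^{-\delta^2 \mu_H / 3}$ for $\delta \in (0,1]$, and unnormalizing, yields
\begin{equation*}
\PP\!\left[\sum_{\ell \in \mathcal{L}} c_\ell Y_\ell > (1+\delta)B(1-\varepsilon)\right] \leq e^{-\delta^2 (1-\varepsilon) B / (3 c_{\max})}.
\end{equation*}
Since Proposition~\ref{prop:final-step-doesnt-matter} ensures that the cost of the final solution is at most $\sum_\ell c_\ell Y_\ell$, this tail bound transfers directly to the output of Algorithm~\ref{alg:rounding}.

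The main subtlety is that the standard statement of the Chernoff bound is phrased in terms of the exact mean, whereas I am using an upper bound $\mu_H \geq \EE[\sum_\ell Z_\ell]$ (since the LP budget constraint is an inequality). This is resolved by the usual MGF derivation: using $\EE[e^{tZ_\ell}] \leq e^{\EE[Z_\ell](e^t - 1)}$ together with monotonicity of $\exp(\mu(e^t-1))$ in $\mu$ for $t \geq 0$ gives $\bigl(e^\delta/(1+\delta)^{1+\delta}\bigr)^{\mu_H} \leq e^{-\delta^2 \mu_H/3}$ for $\delta \in (0,1]$. Beyond this standard manipulation, the argument is mechanical.
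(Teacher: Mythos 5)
Your proposal is correct and follows essentially the same route as the paper: bound the cost of the post-rounding solution (whose line-opening indicators are independent Bernoullis with means $\sum_{S\in\mathcal{I}_\ell} X_{\ell S}^{(\varepsilon)}$), use the relaxed budget constraint to get the expectation bound, apply a multiplicative Chernoff bound to the normalized costs, and transfer both bounds to the final output via the monotonicity of cost under re-assignment and aggregation (Proposition~\ref{prop:final-step-doesnt-matter}). Your explicit handling of the subtlety that the Chernoff bound is applied with an upper bound on the mean rather than the exact mean is a detail the paper leaves implicit, but the argument is otherwise the same.
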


The probabilistic budget guarantee follows from taking $\delta = \frac{\varepsilon}{1-\varepsilon}$.

\begin{corollary}\label{cor:cost}
The cost of the solution returned by Algorithm~\ref{alg:rounding} satisfies the budget constraint with probability at least $1-e^{-\varepsilon^2 B/3c_{\max}}$.
\end{corollary}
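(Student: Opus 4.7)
The plan is to directly invoke Proposition~\ref{prop:cost-characterization}, which provides a one-parameter family of concentration bounds indexed by $\delta\in(0,1]$, and then to pick $\delta$ so that the cost threshold $B(1-\varepsilon)(1+\delta)$ collapses to exactly $B$. This turns the proposition's bound into the clean $\varepsilon$-only statement of the corollary.

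First, I would set $\delta = \varepsilon/(1-\varepsilon)$. Since Algorithm~\ref{alg:rounding} takes as input $\varepsilon\in(0,\tfrac12)$, this choice satisfies $\delta\in(0,1]$, so Proposition~\ref{prop:cost-characterization} applies. Next, I would verify the collapse of the cost bound:
\begin{equation*}
B(1-\varepsilon)(1+\delta) \;=\; B(1-\varepsilon)\cdot\frac{(1-\varepsilon)+\varepsilon}{1-\varepsilon} \;=\; B.
\end{equation*}
Plugging this $\delta$ into the probability expression from Proposition~\ref{prop:cost-characterization} gives that the cost is at most $B$ with probability at least
\begin{equation*}
1 - \exp\!\left(-\frac{\delta^2(1-\varepsilon)B}{3c_{\max}}\right) \;=\; 1 - \exp\!\left(-\frac{\varepsilon^2\,B}{3(1-\varepsilon)\,c_{\max}}\right).
\end{equation*}

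Finally, I would weaken this bound to match the corollary statement. Since $\varepsilon\in(0,\tfrac12)$ implies $1-\varepsilon<1$, we have $\varepsilon^2/(1-\varepsilon)\geq \varepsilon^2$, hence
\begin{equation*}
\exp\!\left(-\frac{\varepsilon^2 B}{3(1-\varepsilon)c_{\max}}\right) \;\leq\; \exp\!\left(-\frac{\varepsilon^2 B}{3c_{\max}}\right),
\end{equation*}
which yields the stated lower bound of $1-e^{-\varepsilon^2 B/(3c_{\max})}$ on the probability of being budget-feasible. There is no genuine obstacle in this argument: all the combinatorial and probabilistic work has already been done in establishing Proposition~\ref{prop:cost-characterization}, and the corollary is essentially a change of variable followed by a simple monotonicity observation. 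The only thing to be careful about is verifying that the chosen $\delta$ lies in the admissible range $(0,1]$, which follows immediately from the algorithm's standing assumption that $\varepsilon\in(0,\tfrac12)$.
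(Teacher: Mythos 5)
Your proposal is correct and is exactly the paper's argument: the paper also obtains the corollary from Proposition~\ref{prop:cost-characterization} by taking $\delta = \varepsilon/(1-\varepsilon)$, and you have merely spelled out the resulting algebra and the harmless weakening of the exponent from $\varepsilon^2/(1-\varepsilon)$ to $\varepsilon^2$. The check that $\delta\in(0,1]$ under $\varepsilon\in(0,\tfrac12)$ is also the right (and only) point of care.
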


We complete the proof of Theorem~\ref{thm:main-thm} by putting together the facts established above.

\begin{proof}[Proof of Theorem~\ref{thm:main-thm}.]
Corollary~\ref{cor:cost} establishes the cost characterization.

For the approximation guarantee, putting together Theorem~\ref{thm:poly-time-separation} with Propositions~\ref{thm:rounding_value},~\ref{prop:final-step-doesnt-matter} and~\ref{prop:loss-from-epsilon}, we obtain that 
$$ALG \geq \left(1-\frac1e\right)OPT^{(\varepsilon)} \geq \left(1-\frac1e\right)(1-\varepsilon)OPT \geq \left(1-\frac1e-\varepsilon\right)OPT.$$
\end{proof}

\section{Numerical Experiments}
\label{sec:numerical-experiments}

{Finally, we complement our theoretical results by demonstrating the practical efficacy of our algorithm on: $(i)$ the Manhattan network, with real passenger data from for-hire vehicle ride requests, and $(ii)$ on a synthetic dataset based on a random network, designed to minimize any structural advantages. We present the former here, and defer our synthetic experiments to~\cref{app:synthetic_experiments}.}

We compare the solution returned by our algorithm to that of a state-of-the-art ILP solver, run on problem $(P)$ in Section~\ref{ssec:rlpp}. Note that the ILP solver cannot directly solve the configuration LP $\widehat{P}$, due to its exponential size, which is why instead feed it the natural formulation of the problem $(P)$. To emulate the real-time constraints on such a policy in practice, we run both our algorithm and the ILP solver under a strict time budget.

\subsection{Practical Implementation}
\label{procedure}

Although the theoretical analysis of our algorithm relies on using the ellipsoid method for solving the configuration LP, in practice, column generation is known to be more efficient (despite lacking poly-time guarantees){~\citep{desaulniers2006column}}. 
Thus, in our experiments we opt for column generation, where the generation of the new columns is done using our separation algorithm (Algorithm~\ref{alg:separation}).

Given an instance $I$ of \lpp, and parameters $\varepsilon \in (0,\frac12)$, $m \in \mathbb{N}$, we proceed as follows:

\begin{enumerate}
    \item Solve the configuration LP $\widehat{P}^{(\varepsilon)}$ in Algorithm~\ref{alg:rounding} via column generation. Return the current LP solution once the time budget has been exceeded.
    \item\label{step-2} Simulate the rounding through re-aggregation steps of Algorithm~\ref{alg:rounding} $m$ times. 
    \item Let $\mathcal{S}_B(I)$ denote the set of all budget-respecting solutions of the $m$ realized solutions;
    return the solution of maximum value in $\mathcal{S}_B(I)$.
\end{enumerate}

We note that this procedure retains our polynomial-time guarantees. Moreover, it benefits from the fact that Step~\ref{step-2} is easily parallelizable. In our experiments, we use $\varepsilon = 0.05$ and $m = 10^4$.

\subsection{Experimental setup and results}

To test the performance of our algorithm in a realistic setting, we develop a new dataset for modeling Mobility-on-Demand platforms, based on the Manhattan road network. We obtain the network from the publicly available OpenStreetMap (OSM) geographical data~\citep{boeing2017osmnx}.

\noindent\textbf{Line inputs.} We set the size of the candidate set of lines to be $L = 1,000$, and generate the candidate set based on the skeleton method proposed by~\citet{SILMAN1974201}, by iteratively choosing four nodes in the graph, uniformly at random, and connecting them via shortest path. 
We also set {$c_{\ell}$ to be proportional to the total travel time between the start and end nodes of line $\ell$.}
{We set the bus capacity $C = 30$, and assume that all bus routes operate at frequency 1. {Note that increasing the frequency of a line is equivalent to duplicating a route of frequency 1 in our algorithm. In our synthetic experiments (\cref{app:synthetic_experiments}) we observe that our algorithm's performance improves relative to the ILP solver as the size of the candidate set of lines increases. Thus, assuming frequency 1 lines only serves as a lower bound on our algorithm's performance on the real-world dataset.}

\noindent\textbf{Passenger inputs.} 
We use records of for-hire vehicle trips in Manhattan using the New York City Open Data platform, considering an hour's worth of trip requests between 5pm and 6pm on the first Tuesday of February, March and April 2018.
Our time windows have $9983$, $13851$, and $12301$ trip requests respectively.
{We note that the more commonly-used taxicab and rideshare datasets are unsuitable for our setting, as these datasets are heavily biased towards short trips (indeed, running our algorithm on this data results in most trips using the car-only option). In contrast, the for-hire trips are longer, and hence lead to significant savings from multi-modal trips.}

{
For each trip, instead of exact pickup and drop-off coordinates, the dataset provides only origin and destination `areas' (the over 4,000 nodes in the Manhattan network are divided into 69 areas). Given the area of an origin or destination, we sample a node in the area from the network uniformly at random. For each passenger $p \in \mathcal{P}$ and line $\ell \in \mathcal{L}$, we define the passenger-line value to be the difference between the time travelled by car when using $\ell$ and the duration of the direct car trip. Thus, our objective function is proportional to the total reduction in miles travelled by car in the system.}
{We moreover impose the constraint that a passenger-line value is only positive if the travel time induced for the passenger is no more than $\beta$ times the time of a direct trip by car, and set this detour factor $\beta = 3$.}

\begin{figure}\label{fig:manhattan_plan}
    \centering
    \includegraphics[scale=0.19]{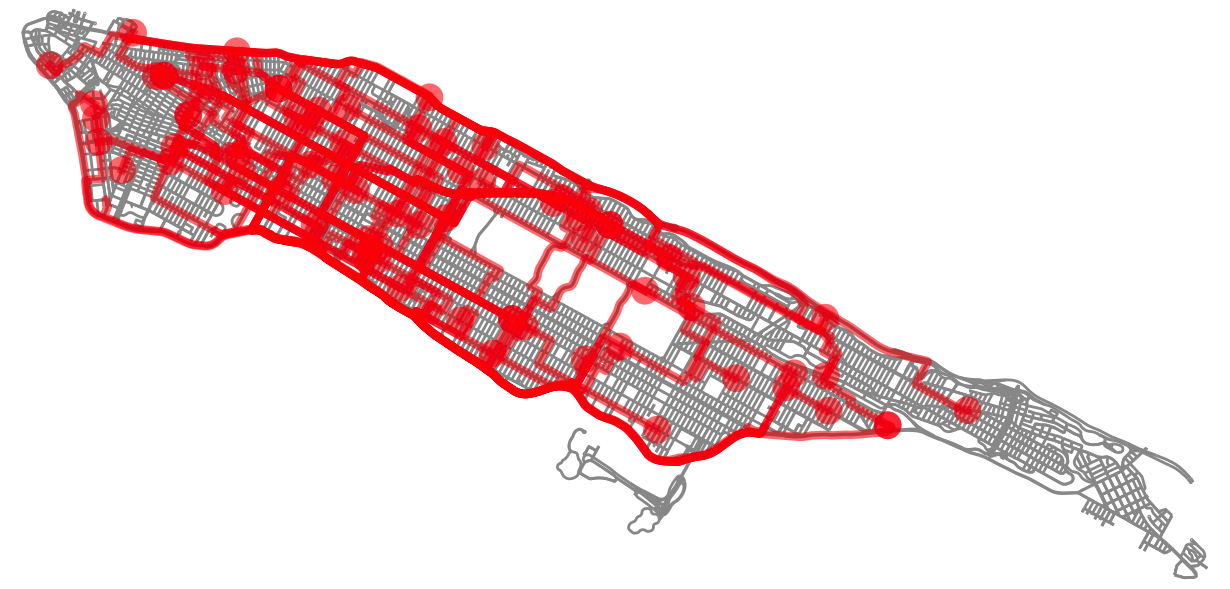}
    \caption{An example of line plan generated by our algorithm for the Manhattan network. We consider here the trip requests made on April 3, 2018 from 5pm to 6pm, with $B=3\cdot10^4$,  $L=10^3$ and $\beta =3$.}
\end{figure}

{We run the procedure for each of the three sets of requests, averaging the solutions returned by the procedure over these three instances. Let $ALG$ denote the corresponding empirical average. We also report $n_{\text{ILP}}$ and $n_{\text{ALG}}$, the number of lines respectively opened in the solutions returned by our algorithm and the ILP, and $\alpha$, the fraction of the outputs of the rounding process which were budget-respecting (out of the $m=10^4$ solutions of the rounding process)}. {Finally, we compute the {empirical average of the} multiplicative gap between the solution returned by our procedure and the value of the configuration LP $\widehat{P}^{(\varepsilon)}$ at the end of the allotted time. We use $\eta$ to denote this gap and note that, in cases where the configuration LP is not solved to optimality before rounding, $\eta$ may exceed 1.}

We report the results of our experiments in Table~\ref{tab:results_manhattan}. Our findings illustrate the practicality of our algorithm and relative inadequacy of the ILP for the task of real-time routing at scale.

\begin{table}[ht]
\centering
 \begin{tabular}{|c|cc|cccc|}
 \hline
 $B$ & ILP & $ALG$ & $n_{\text{ILP}}$ & $n_{\text{ALG}}$ & $\alpha$ & $\eta$ \\
  \hline \hline
  $10^4$  & \textbf{289,139} & $279,364$ 
  & $15$ & $16$ & $0.62$ & $0.87$ \\
  $2\cdot 10^4$ & $356,621$ & \textbf{509,586}  
  & $25$ & $27$ & $0.64$ & $0.94$ \\
  $3\cdot 10^4$  & ---  & \textbf{704,800} & --- & $36$ &  $0.65$ & $0.9$\\
  $5\cdot 10^4$ & --- &  \textbf{917,683} & --- & $60$ & $0.68$ & $0.89$\\
  $10^5$  & ---  & \textbf{1,140,700}  & --- & $106$ & $1$ & $1.12$ \\
  $2\cdot 10^5$ & \textbf{2,859,276} & $1,132,616$  & $242$ & $101$ & $1$ & $1.11$ \\
  \hline
  \end{tabular}
\caption{\emph{Numerical results for different budget values}: We set $L=10^3$, $\beta=3$. Bold values indicate the better solution for the corresponding value of $B$.  While the ILP outperforms our algorithm for the smallest and largest budgets, our algorithm consistently outperforms the ILP solver for more realistic intermediary budgets, where the ILP solver is often unable to return a solution within the allotted time.\\ 
The gap $\eta$ between the solution produced by our procedure and the value of the configuration LP at the end of the allotted time, is consistently above $87\%$, which is a significant improvement on the $0.95 \cdot \left(1-1/e\right)$ (i.e., $60\%$) theoretical guarantee. For larger budgets (i.e., between $10^5$ and $2\cdot 10^5$), the performance of our algorithm plateaus, as the column generation process requires more iterations to optimally solve the configuration LP. 
For the largest budget of $2\cdot 10^5$, the ILP is again able to get a solution by opening 242 lines (approximately a quarter of the candidate set). We conjecture that, with such a large budget, any set of lines is good enough, while more refined search is necessary to find the optimal lines for a more restricted budget. 
} \label{tab:results_manhattan}
\end{table}

\section{Conclusion}\label{sec:conclusion}

The integration of ride-hailing platforms' flexible demand-responsive services with the sustainability of mass transit systems is the next frontier in urban mobility. As ride-hailing platforms such as Uber and Lyft expand their range of services and look to adding high-capacity vehicles such as buses and shuttles to their fleets, they are faced with the following operational question: {\it Given a set of dynamically changing trip requests and a fleet of high-capacity vehicles, what is the optimal set of bus routes and corresponding frequencies with which to operate them?} 
In this work we provided a partial characterization of the hardness landscape of the \lineplanningproblem{} by proving that, unless the \platform{} has access to an existing candidate set of lines and passengers can only travel via one bus line (but are nevertheless allowed to transfer between bus and car services), the problem is hard to approximate within a constant factor. Under these assumptions, however, we developed a $1-\frac1e-\varepsilon$ approximation algorithm. We moreover demonstrated its efficacy in numerical experiments by showing that, when the \platform{} is constrained to short computation times (which is precisely the case if it wishes to be demand-responsive), then our algorithm outperforms exact methods on state-of-the-art ILP solvers.  

This paper lends itself to a number of natural directions for future work. From a theory perspective, though we showed that our algorithm can be modified with at most a constant-factor loss when the trip optimality assumption is relaxed, {existing approximation bounds for the interval scheduling problem are quite weak. An important area of investigation is whether we can leverage the additional structure of the \lineplanningproblem{} to strengthen the bounds of existing interval scheduling techniques.} 

\begin{acks}
This material is based upon work partially supported by the National Science Foundation under Grant No. CNS-1952011.
\end{acks}

\newpage
\bibliographystyle{ACM-Reference-Format}
\bibliography{main}


\begin{thebibliography}{45}


\ifx \showCODEN    \undefined \def \showCODEN     #1{\unskip}     \fi
\ifx \showDOI      \undefined \def \showDOI       #1{#1}\fi
\ifx \showISBNx    \undefined \def \showISBNx     #1{\unskip}     \fi
\ifx \showISBNxiii \undefined \def \showISBNxiii  #1{\unskip}     \fi
\ifx \showISSN     \undefined \def \showISSN      #1{\unskip}     \fi
\ifx \showLCCN     \undefined \def \showLCCN      #1{\unskip}     \fi
\ifx \shownote     \undefined \def \shownote      #1{#1}          \fi
\ifx \showarticletitle \undefined \def \showarticletitle #1{#1}   \fi
\ifx \showURL      \undefined \def \showURL       {\relax}        \fi
\providecommand\bibfield[2]{#2}
\providecommand\bibinfo[2]{#2}
\providecommand\natexlab[1]{#1}
\providecommand\showeprint[2][]{arXiv:#2}

\bibitem[\protect\citeauthoryear{Alonso-Mora, Samaranayake, Wallar, Frazzoli,
  and Rus}{Alonso-Mora et~al\mbox{.}}{2017}]%
        {Alonso-Mora}
\bibfield{author}{\bibinfo{person}{Javier Alonso-Mora},
  \bibinfo{person}{Samitha Samaranayake}, \bibinfo{person}{Alex Wallar},
  \bibinfo{person}{Emilio Frazzoli}, {and} \bibinfo{person}{Daniela Rus}.}
  \bibinfo{year}{2017}\natexlab{}.
\newblock \showarticletitle{On-demand high-capacity ride-sharing via dynamic
  trip-vehicle assignment}.
\newblock \bibinfo{journal}{\emph{Proceedings of the National Academy of
  Sciences}} \bibinfo{volume}{114}, \bibinfo{number}{3} (\bibinfo{year}{2017}),
  \bibinfo{pages}{462--467}.
\newblock


\bibitem[\protect\citeauthoryear{Banerjee, Freund, and Lykouris}{Banerjee
  et~al\mbox{.}}{2016}]%
        {banerjee2016pricing}
\bibfield{author}{\bibinfo{person}{Siddhartha Banerjee},
  \bibinfo{person}{Daniel Freund}, {and} \bibinfo{person}{Thodoris Lykouris}.}
  \bibinfo{year}{2016}\natexlab{}.
\newblock \showarticletitle{Pricing and optimization in shared vehicle systems:
  An approximation framework}.
\newblock \bibinfo{journal}{\emph{arXiv preprint arXiv:1608.06819}}
  (\bibinfo{year}{2016}).
\newblock


\bibitem[\protect\citeauthoryear{Banerjee, Kanoria, and Qian}{Banerjee
  et~al\mbox{.}}{2018}]%
        {banerjee2018state}
\bibfield{author}{\bibinfo{person}{Siddhartha Banerjee}, \bibinfo{person}{Yash
  Kanoria}, {and} \bibinfo{person}{Pengyu Qian}.}
  \bibinfo{year}{2018}\natexlab{}.
\newblock \showarticletitle{State dependent control of closed queueing
  networks}. In \bibinfo{booktitle}{\emph{ACM SIGMETRICS '18}}.
\newblock


\bibitem[\protect\citeauthoryear{Barra, Carvalho, Teypaz, Cung, and
  Balassiano}{Barra et~al\mbox{.}}{2007}]%
        {barra2007solving}
\bibfield{author}{\bibinfo{person}{Alexandre Barra}, \bibinfo{person}{Luis
  Carvalho}, \bibinfo{person}{Nicolas Teypaz}, \bibinfo{person}{Van-Dat Cung},
  {and} \bibinfo{person}{Ronaldo Balassiano}.} \bibinfo{year}{2007}\natexlab{}.
\newblock \showarticletitle{Solving the transit network design problem with
  constraint programming}.
\newblock


\bibitem[\protect\citeauthoryear{Bertsekas}{Bertsekas}{1991}]%
        {bertsekas1991linear}
\bibfield{author}{\bibinfo{person}{Dimitri~P Bertsekas}.}
  \bibinfo{year}{1991}\natexlab{}.
\newblock \bibinfo{booktitle}{\emph{Linear network optimization: algorithms and
  codes}}.
\newblock \bibinfo{publisher}{MIT press}.
\newblock


\bibitem[\protect\citeauthoryear{Bland, Goldfarb, and Todd}{Bland
  et~al\mbox{.}}{1981}]%
        {bland1981ellipsoid}
\bibfield{author}{\bibinfo{person}{Robert~G Bland}, \bibinfo{person}{Donald
  Goldfarb}, {and} \bibinfo{person}{Michael~J Todd}.}
  \bibinfo{year}{1981}\natexlab{}.
\newblock \showarticletitle{The ellipsoid method: A survey}.
\newblock \bibinfo{journal}{\emph{Operations research}} \bibinfo{volume}{29},
  \bibinfo{number}{6} (\bibinfo{year}{1981}), \bibinfo{pages}{1039--1091}.
\newblock


\bibitem[\protect\citeauthoryear{Boeing}{Boeing}{2017}]%
        {boeing2017osmnx}
\bibfield{author}{\bibinfo{person}{Geoff Boeing}.}
  \bibinfo{year}{2017}\natexlab{}.
\newblock \showarticletitle{OSMnx: A Python package to work with
  graph-theoretic OpenStreetMap street networks}.
\newblock \bibinfo{journal}{\emph{Journal of Open Source Software}}
  \bibinfo{volume}{2}, \bibinfo{number}{12} (\bibinfo{year}{2017}).
\newblock


\bibitem[\protect\citeauthoryear{Bornd{\"o}rfer and Karbstein}{Bornd{\"o}rfer
  and Karbstein}{2012}]%
        {borndorfer2012direct}
\bibfield{author}{\bibinfo{person}{Ralf Bornd{\"o}rfer} {and}
  \bibinfo{person}{Marika Karbstein}.} \bibinfo{year}{2012}\natexlab{}.
\newblock \showarticletitle{A direct connection approach to integrated line
  planning and passenger routing}. In \bibinfo{booktitle}{\emph{12th Workshop
  on Algorithmic Approaches for Transportation Modelling, Optimization, and
  Systems}}. Schloss Dagstuhl-Leibniz-Zentrum fuer Informatik.
\newblock


\bibitem[\protect\citeauthoryear{Borndörfer, Grötschel, and
  Pfetsch}{Borndörfer et~al\mbox{.}}{2007}]%
        {Borndorfer}
\bibfield{author}{\bibinfo{person}{Ralf Borndörfer}, \bibinfo{person}{Martin
  Grötschel}, {and} \bibinfo{person}{Marc~E. Pfetsch}.}
  \bibinfo{year}{2007}\natexlab{}.
\newblock \showarticletitle{A Column-Generation Approach to Line Planning in
  Public Transport}.
\newblock \bibinfo{journal}{\emph{Transportation Science}}
  \bibinfo{volume}{41}, \bibinfo{number}{1} (\bibinfo{year}{2007}),
  \bibinfo{pages}{123--132}.
\newblock


\bibitem[\protect\citeauthoryear{Braverman, Dai, Liu, and Ying}{Braverman
  et~al\mbox{.}}{2019}]%
        {braverman2019empty}
\bibfield{author}{\bibinfo{person}{Anton Braverman}, \bibinfo{person}{Jim~G
  Dai}, \bibinfo{person}{Xin Liu}, {and} \bibinfo{person}{Lei Ying}.}
  \bibinfo{year}{2019}\natexlab{}.
\newblock \showarticletitle{Empty-car routing in ridesharing systems}.
\newblock \bibinfo{journal}{\emph{Operations Research}} \bibinfo{volume}{67},
  \bibinfo{number}{5} (\bibinfo{year}{2019}), \bibinfo{pages}{1437--1452}.
\newblock


\bibitem[\protect\citeauthoryear{Carr and Vempala}{Carr and Vempala}{2000}]%
        {carr2000randomized}
\bibfield{author}{\bibinfo{person}{Robert Carr} {and} \bibinfo{person}{Santosh
  Vempala}.} \bibinfo{year}{2000}\natexlab{}.
\newblock \showarticletitle{Randomized metarounding}. In
  \bibinfo{booktitle}{\emph{Proceedings of the thirty-second annual ACM
  symposium on Theory of computing}}. \bibinfo{pages}{58--62}.
\newblock


\bibitem[\protect\citeauthoryear{Ceder and Wilson}{Ceder and Wilson}{1986}]%
        {CEDER}
\bibfield{author}{\bibinfo{person}{Avishai Ceder} {and}
  \bibinfo{person}{Nigel~H.M. Wilson}.} \bibinfo{year}{1986}\natexlab{}.
\newblock \showarticletitle{Bus network design}.
\newblock \bibinfo{journal}{\emph{Transportation Research Part B:
  Methodological}} \bibinfo{volume}{20}, \bibinfo{number}{4}
  (\bibinfo{year}{1986}), \bibinfo{pages}{331 -- 344}.
\newblock


\bibitem[\protect\citeauthoryear{Chakroborty and Wivedi}{Chakroborty and
  Wivedi}{2002}]%
        {Chakr}
\bibfield{author}{\bibinfo{person}{Partha Chakroborty} {and}
  \bibinfo{person}{Tathagat Wivedi}.} \bibinfo{year}{2002}\natexlab{}.
\newblock \showarticletitle{Optimal Route Network Design for Transit Systems
  Using Genetic Algorithms}.
\newblock \bibinfo{journal}{\emph{Engineering Optimization}}
  \bibinfo{volume}{34}, \bibinfo{number}{1} (\bibinfo{year}{2002}),
  \bibinfo{pages}{83--100}.
\newblock


\bibitem[\protect\citeauthoryear{Chekuri and Pal}{Chekuri and Pal}{2005}]%
        {recursive_greedy}
\bibfield{author}{\bibinfo{person}{Chandra Chekuri} {and}
  \bibinfo{person}{Martin Pal}.} \bibinfo{year}{2005}\natexlab{}.
\newblock \showarticletitle{A recursive greedy algorithm for walks in directed
  graphs}. In \bibinfo{booktitle}{\emph{46th Annual IEEE Symposium on
  Foundations of Computer Science (FOCS'05)}}. IEEE, \bibinfo{pages}{245--253}.
\newblock


\bibitem[\protect\citeauthoryear{Desaulniers, Desrosiers, and
  Solomon}{Desaulniers et~al\mbox{.}}{2006}]%
        {desaulniers2006column}
\bibfield{author}{\bibinfo{person}{Guy Desaulniers}, \bibinfo{person}{Jacques
  Desrosiers}, {and} \bibinfo{person}{Marius~M Solomon}.}
  \bibinfo{year}{2006}\natexlab{}.
\newblock \bibinfo{booktitle}{\emph{Column generation}}.
  Vol.~\bibinfo{volume}{5}.
\newblock \bibinfo{publisher}{Springer Science \& Business Media}.
\newblock


\bibitem[\protect\citeauthoryear{Dubois, Bel, and Llibre}{Dubois
  et~al\mbox{.}}{1979}]%
        {Dubois}
\bibfield{author}{\bibinfo{person}{D. Dubois}, \bibinfo{person}{G. Bel}, {and}
  \bibinfo{person}{M. Llibre}.} \bibinfo{year}{1979}\natexlab{}.
\newblock \showarticletitle{A Set of Methods in Transportation Network
  Synthesis and Analysis}.
\newblock \bibinfo{journal}{\emph{The Journal of the Operational Research
  Society}} \bibinfo{volume}{30}, \bibinfo{number}{9} (\bibinfo{year}{1979}),
  \bibinfo{pages}{797--808}.
\newblock


\bibitem[\protect\citeauthoryear{Erlebach and Spieksma}{Erlebach and
  Spieksma}{2003}]%
        {Erlebach2003}
\bibfield{author}{\bibinfo{person}{T. Erlebach} {and} \bibinfo{person}{F.C.R.
  Spieksma}.} \bibinfo{year}{2003}\natexlab{}.
\newblock \showarticletitle{Interval selection: applications, algorithms, and
  lower bounds}.
\newblock \bibinfo{journal}{\emph{Journal of Algorithms}} \bibinfo{volume}{46},
  \bibinfo{number}{1} (\bibinfo{date}{Jan.} \bibinfo{year}{2003}),
  \bibinfo{pages}{27--53}.
\newblock
\showISSN{0196-6774}
\urldef\tempurl%
\url{https://doi.org/10.1016/S0196-6774(02)00291-2}
\showDOI{\tempurl}


\bibitem[\protect\citeauthoryear{Fan and Machemehl}{Fan and Machemehl}{2006}]%
        {Fan}
\bibfield{author}{\bibinfo{person}{Wei Fan} {and} \bibinfo{person}{Randy~B.
  Machemehl}.} \bibinfo{year}{2006}\natexlab{}.
\newblock \showarticletitle{Using a Simulated Annealing Algorithm to Solve the
  Transit Route Network Design Problem}.
\newblock \bibinfo{journal}{\emph{Journal of Transportation Engineering}}
  \bibinfo{volume}{132}, \bibinfo{number}{2} (\bibinfo{year}{2006}),
  \bibinfo{pages}{122--132}.
\newblock


\bibitem[\protect\citeauthoryear{Farahani, Miandoabchi, Szeto, and
  Rashidi}{Farahani et~al\mbox{.}}{2013}]%
        {farahani2013review}
\bibfield{author}{\bibinfo{person}{Reza~Zanjirani Farahani},
  \bibinfo{person}{Elnaz Miandoabchi}, \bibinfo{person}{Wai~Yuen Szeto}, {and}
  \bibinfo{person}{Hannaneh Rashidi}.} \bibinfo{year}{2013}\natexlab{}.
\newblock \showarticletitle{A review of urban transportation network design
  problems}.
\newblock \bibinfo{journal}{\emph{European Journal of Operational Research}}
  \bibinfo{volume}{229}, \bibinfo{number}{2} (\bibinfo{year}{2013}),
  \bibinfo{pages}{281--302}.
\newblock


\bibitem[\protect\citeauthoryear{Fitzsimmons}{Fitzsimmons}{2016}]%
        {nyc_packed_subways}
\bibfield{author}{\bibinfo{person}{Emma~G. Fitzsimmons}.}
  \bibinfo{year}{2016}\natexlab{}.
\newblock \showarticletitle{{Surge in Ridership Pushes New York Subway to
  Limit}}.
\newblock \bibinfo{journal}{\emph{{The New York Times}}}
  (\bibinfo{year}{2016}).
\newblock


\bibitem[\protect\citeauthoryear{Fleischer, Goemans, Mirrokni, and
  Sviridenko}{Fleischer et~al\mbox{.}}{2011}]%
        {SAP}
\bibfield{author}{\bibinfo{person}{Lisa Fleischer}, \bibinfo{person}{Michel~X
  Goemans}, \bibinfo{person}{Vahab~S Mirrokni}, {and} \bibinfo{person}{Maxim
  Sviridenko}.} \bibinfo{year}{2011}\natexlab{}.
\newblock \showarticletitle{Tight approximation algorithms for maximum
  separable assignment problems}.
\newblock \bibinfo{journal}{\emph{Mathematics of Operations Research}}
  \bibinfo{volume}{36}, \bibinfo{number}{3} (\bibinfo{year}{2011}),
  \bibinfo{pages}{416--431}.
\newblock


\bibitem[\protect\citeauthoryear{Gattermann, Harbering, and
  Sch{\"o}bel}{Gattermann et~al\mbox{.}}{2017}]%
        {gattermann2017line}
\bibfield{author}{\bibinfo{person}{Philine Gattermann}, \bibinfo{person}{Jonas
  Harbering}, {and} \bibinfo{person}{Anita Sch{\"o}bel}.}
  \bibinfo{year}{2017}\natexlab{}.
\newblock \showarticletitle{Line pool generation}.
\newblock \bibinfo{journal}{\emph{Public Transport}} \bibinfo{volume}{9},
  \bibinfo{number}{1-2} (\bibinfo{year}{2017}), \bibinfo{pages}{7--32}.
\newblock


\bibitem[\protect\citeauthoryear{Guihaire and Hao}{Guihaire and Hao}{2008}]%
        {guihaire2008transit}
\bibfield{author}{\bibinfo{person}{Val{\'e}rie Guihaire} {and}
  \bibinfo{person}{Jin-Kao Hao}.} \bibinfo{year}{2008}\natexlab{}.
\newblock \showarticletitle{Transit network design and scheduling: A global
  review}.
\newblock \bibinfo{journal}{\emph{Transportation Research Part A: Policy and
  Practice}} \bibinfo{volume}{42}, \bibinfo{number}{10} (\bibinfo{year}{2008}),
  \bibinfo{pages}{1251--1273}.
\newblock


\bibitem[\protect\citeauthoryear{Hawkins}{Hawkins}{2017}]%
        {hawkins2017lyft}
\bibfield{author}{\bibinfo{person}{AJ Hawkins}.}
  \bibinfo{year}{2017}\natexlab{}.
\newblock \showarticletitle{Lyft Shuttle mimics mass transit with fixed routes
  and fares}.
\newblock \bibinfo{journal}{\emph{The Verge}} (\bibinfo{year}{2017}).
\newblock
\urldef\tempurl%
\url{https://www.theverge.com/2017/3/29/15111492/lyft-shuttle-fixed-route-fare-sf-chicago}
\showURL{%
\tempurl}


\bibitem[\protect\citeauthoryear{Johnson}{Johnson}{2020}]%
        {pandemic_microtransit}
\bibfield{author}{\bibinfo{person}{Doug Johnson}.}
  \bibinfo{year}{2020}\natexlab{}.
\newblock \showarticletitle{Microtransit Gives City Agencies a Lift During the
  Pandemic}.
\newblock \bibinfo{journal}{\emph{Wired}} (\bibinfo{year}{2020}).
\newblock
\urldef\tempurl%
\url{https://www.wired.com/story/microtransit-gives-city-agencies-a-lift-during-the-pandemic/}
\showURL{%
\tempurl}


\bibitem[\protect\citeauthoryear{Kalaitzis, Madry, Newman, Pol{\'a}{\v{c}}ek,
  and Svensson}{Kalaitzis et~al\mbox{.}}{2015}]%
        {MBA}
\bibfield{author}{\bibinfo{person}{Christos Kalaitzis},
  \bibinfo{person}{Aleksander Madry}, \bibinfo{person}{Alantha Newman},
  \bibinfo{person}{Luk{\'a}{\v{s}} Pol{\'a}{\v{c}}ek}, {and}
  \bibinfo{person}{Ola Svensson}.} \bibinfo{year}{2015}\natexlab{}.
\newblock \showarticletitle{On the configuration {LP} for maximum budgeted
  allocation}.
\newblock \bibinfo{journal}{\emph{Mathematical Programming}}
  \bibinfo{volume}{154}, \bibinfo{number}{1-2}, \bibinfo{pages}{427--462}.
\newblock


\bibitem[\protect\citeauthoryear{Kanoria and Qian}{Kanoria and Qian}{2019}]%
        {kanoria2019near}
\bibfield{author}{\bibinfo{person}{Yash Kanoria} {and} \bibinfo{person}{Pengyu
  Qian}.} \bibinfo{year}{2019}\natexlab{}.
\newblock \showarticletitle{Near optimal control of a ride-hailing platform via
  mirror backpressure}.
\newblock \bibinfo{journal}{\emph{arXiv preprint arXiv:1903.02764}}
  (\bibinfo{year}{2019}).
\newblock


\bibitem[\protect\citeauthoryear{Ma}{Ma}{2017}]%
        {Ma}
\bibfield{author}{\bibinfo{person}{Tai-Yu Ma}.}
  \bibinfo{year}{2017}\natexlab{}.
\newblock \showarticletitle{On-demand dynamic Bi-/multi-modal ride-sharing
  using optimal passenger-vehicle assignments}. In
  \bibinfo{booktitle}{\emph{2017 IEEE International Conference on Environment
  and Electrical Engineering and 2017 IEEE Industrial and Commercial Power
  Systems Europe (EEEIC/I \& CPS Europe)}}. IEEE, \bibinfo{pages}{1--5}.
\newblock


\bibitem[\protect\citeauthoryear{Ma, Rasulkhani, Chow, and Klein}{Ma
  et~al\mbox{.}}{2019}]%
        {MA2019417}
\bibfield{author}{\bibinfo{person}{Tai-Yu Ma}, \bibinfo{person}{Saeid
  Rasulkhani}, \bibinfo{person}{Joseph~YJ Chow}, {and} \bibinfo{person}{Sylvain
  Klein}.} \bibinfo{year}{2019}\natexlab{}.
\newblock \showarticletitle{A dynamic ridesharing dispatch and idle vehicle
  repositioning strategy with integrated transit transfers}.
\newblock \bibinfo{journal}{\emph{Transportation Research Part E: Logistics and
  Transportation Review}}  \bibinfo{volume}{128} (\bibinfo{year}{2019}),
  \bibinfo{pages}{417--442}.
\newblock


\bibitem[\protect\citeauthoryear{Magnanti and Wong}{Magnanti and Wong}{1984}]%
        {magnanti1984network}
\bibfield{author}{\bibinfo{person}{Thomas~L Magnanti} {and}
  \bibinfo{person}{Richard~T Wong}.} \bibinfo{year}{1984}\natexlab{}.
\newblock \showarticletitle{Network design and transportation planning: Models
  and algorithms}.
\newblock \bibinfo{journal}{\emph{Transportation science}}
  \bibinfo{volume}{18}, \bibinfo{number}{1} (\bibinfo{year}{1984}),
  \bibinfo{pages}{1--55}.
\newblock


\bibitem[\protect\citeauthoryear{Manurangsi}{Manurangsi}{2017}]%
        {manurangsi2017almost}
\bibfield{author}{\bibinfo{person}{Pasin Manurangsi}.}
  \bibinfo{year}{2017}\natexlab{}.
\newblock \showarticletitle{Almost-polynomial ratio ETH-hardness of
  approximating densest k-subgraph}. In \bibinfo{booktitle}{\emph{Proceedings
  of the 49th Annual ACM SIGACT Symposium on Theory of Computing}}.
  \bibinfo{pages}{954--961}.
\newblock


\bibitem[\protect\citeauthoryear{Mar{\'\i}n and Jaramillo}{Mar{\'\i}n and
  Jaramillo}{2009}]%
        {marin2009urban}
\bibfield{author}{\bibinfo{person}{{\'A}ngel~G Mar{\'\i}n} {and}
  \bibinfo{person}{Patricia Jaramillo}.} \bibinfo{year}{2009}\natexlab{}.
\newblock \showarticletitle{Urban rapid transit network design: accelerated
  Benders decomposition}.
\newblock \bibinfo{journal}{\emph{Annals of Operations Research}}
  \bibinfo{volume}{169}, \bibinfo{number}{1} (\bibinfo{year}{2009}),
  \bibinfo{pages}{35--53}.
\newblock


\bibitem[\protect\citeauthoryear{Nachtigall and Jerosch}{Nachtigall and
  Jerosch}{2008}]%
        {nachtigall2008simultaneous}
\bibfield{author}{\bibinfo{person}{Karl Nachtigall} {and} \bibinfo{person}{Karl
  Jerosch}.} \bibinfo{year}{2008}\natexlab{}.
\newblock \showarticletitle{Simultaneous network line planning and traffic
  assignment}. In \bibinfo{booktitle}{\emph{8th Workshop on Algorithmic
  Approaches for Transportation Modeling, Optimization, and Systems
  (ATMOS'08)}}. Schloss Dagstuhl-Leibniz-Zentrum f{\"u}r Informatik.
\newblock


\bibitem[\protect\citeauthoryear{Offenhartz}{Offenhartz}{2020}]%
        {nyc_pandemic_buses}
\bibfield{author}{\bibinfo{person}{Jake Offenhartz}.}
  \bibinfo{year}{2020}\natexlab{}.
\newblock \showarticletitle{MTA Will End Free Cab Rides For Essential Workers,
  As Overnight Subway Shutdown Continues}.
\newblock \bibinfo{journal}{\emph{Gothamist}} (\bibinfo{year}{2020}).
\newblock
\urldef\tempurl%
\url{https://gothamist.com/news/mta-will-end-free-cab-rides-essential-workers-overnight-subway-shutdown-continues}
\showURL{%
\tempurl}


\bibitem[\protect\citeauthoryear{Pape, Reinecke, and Reinecke}{Pape
  et~al\mbox{.}}{1995}]%
        {Pape}
\bibfield{author}{\bibinfo{person}{Uwe Pape}, \bibinfo{person}{Yean-Suk
  Reinecke}, {and} \bibinfo{person}{Erwin Reinecke}.}
  \bibinfo{year}{1995}\natexlab{}.
\newblock \showarticletitle{Line network planning}. In
  \bibinfo{booktitle}{\emph{Computer-Aided Transit Scheduling}}.
  \bibinfo{publisher}{Springer}, \bibinfo{pages}{1--7}.
\newblock


\bibitem[\protect\citeauthoryear{Santi, Resta, Szell, Sobolevsky, Strogatz, and
  Ratti}{Santi et~al\mbox{.}}{2014}]%
        {Santi}
\bibfield{author}{\bibinfo{person}{Paolo Santi}, \bibinfo{person}{Giovanni
  Resta}, \bibinfo{person}{Michael Szell}, \bibinfo{person}{Stanislav
  Sobolevsky}, \bibinfo{person}{Steven~H. Strogatz}, {and}
  \bibinfo{person}{Carlo Ratti}.} \bibinfo{year}{2014}\natexlab{}.
\newblock \showarticletitle{Quantifying the benefits of vehicle pooling with
  shareability networks}.
\newblock \bibinfo{journal}{\emph{Proceedings of the National Academy of
  Sciences}} \bibinfo{volume}{111}, \bibinfo{number}{37}
  (\bibinfo{year}{2014}), \bibinfo{pages}{13290--13294}.
\newblock


\bibitem[\protect\citeauthoryear{S{\'e}journ{\'e}, Samaranayake, and
  Banerjee}{S{\'e}journ{\'e} et~al\mbox{.}}{2018}]%
        {sejourne2018price}
\bibfield{author}{\bibinfo{person}{Thibault S{\'e}journ{\'e}},
  \bibinfo{person}{Samitha Samaranayake}, {and} \bibinfo{person}{Siddhartha
  Banerjee}.} \bibinfo{year}{2018}\natexlab{}.
\newblock \showarticletitle{The price of fragmentation in mobility-on-demand
  services}.
\newblock \bibinfo{journal}{\emph{Proceedings of the ACM on Measurement and
  Analysis of Computing Systems}} \bibinfo{volume}{2}, \bibinfo{number}{2}
  (\bibinfo{year}{2018}), \bibinfo{pages}{1--26}.
\newblock


\bibitem[\protect\citeauthoryear{Silman, Barzily, and Passy}{Silman
  et~al\mbox{.}}{1974}]%
        {SILMAN1974201}
\bibfield{author}{\bibinfo{person}{Lionel~Adrian Silman}, \bibinfo{person}{Zeev
  Barzily}, {and} \bibinfo{person}{Ury Passy}.}
  \bibinfo{year}{1974}\natexlab{}.
\newblock \showarticletitle{Planning the route system for urban buses}.
\newblock \bibinfo{journal}{\emph{Computers \& operations research}}
  \bibinfo{volume}{1}, \bibinfo{number}{2} (\bibinfo{year}{1974}),
  \bibinfo{pages}{201--211}.
\newblock


\bibitem[\protect\citeauthoryear{Stiglic, Agatz, Savelsbergh, and
  Gradisar}{Stiglic et~al\mbox{.}}{2018}]%
        {benefit_integration}
\bibfield{author}{\bibinfo{person}{Mitja Stiglic}, \bibinfo{person}{Niels
  Agatz}, \bibinfo{person}{Martin Savelsbergh}, {and} \bibinfo{person}{Mirko
  Gradisar}.} \bibinfo{year}{2018}\natexlab{}.
\newblock \showarticletitle{Enhancing urban mobility: Integrating ride-sharing
  and public transit}.
\newblock \bibinfo{journal}{\emph{Computers \& Operations Research}}
  \bibinfo{volume}{90} (\bibinfo{year}{2018}), \bibinfo{pages}{12--21}.
\newblock


\bibitem[\protect\citeauthoryear{Verma}{Verma}{2020}]%
        {nyc_underserved_comms}
\bibfield{author}{\bibinfo{person}{Pranshu Verma}.}
  \bibinfo{year}{2020}\natexlab{}.
\newblock \showarticletitle{{‘We’re Desperate’: Transit Cuts Felt Deepest
  in Low-Income Areas }}.
\newblock \bibinfo{journal}{\emph{{The New York Times}}}
  (\bibinfo{year}{2020}).
\newblock


\bibitem[\protect\citeauthoryear{Verschae and Wiese}{Verschae and
  Wiese}{2014}]%
        {LP_conf_scheduling}
\bibfield{author}{\bibinfo{person}{Jos{\'e} Verschae} {and}
  \bibinfo{person}{Andreas Wiese}.} \bibinfo{year}{2014}\natexlab{}.
\newblock \showarticletitle{On the configuration-LP for scheduling on unrelated
  machines}.
\newblock \bibinfo{journal}{\emph{Journal of Scheduling}} \bibinfo{volume}{17},
  \bibinfo{number}{4} (\bibinfo{year}{2014}), \bibinfo{pages}{371--383}.
\newblock


\bibitem[\protect\citeauthoryear{Wan and Lo}{Wan and Lo}{2003}]%
        {wan2003mixed}
\bibfield{author}{\bibinfo{person}{Quentin~K Wan} {and} \bibinfo{person}{Hong~K
  Lo}.} \bibinfo{year}{2003}\natexlab{}.
\newblock \showarticletitle{A mixed integer formulation for multiple-route
  transit network design}.
\newblock \bibinfo{journal}{\emph{Journal of Mathematical Modelling and
  Algorithms}} \bibinfo{volume}{2}, \bibinfo{number}{4} (\bibinfo{year}{2003}),
  \bibinfo{pages}{299--308}.
\newblock


\bibitem[\protect\citeauthoryear{Wolsey}{Wolsey}{1982}]%
        {Wolsey}
\bibfield{author}{\bibinfo{person}{Laurence~A. Wolsey}.}
  \bibinfo{year}{1982}\natexlab{}.
\newblock \showarticletitle{Maximising Real-Valued Submodular Functions: Primal
  and Dual Heuristics for Location Problems}.
\newblock \bibinfo{journal}{\emph{Mathematics of Operations Research}}
  \bibinfo{volume}{7}, \bibinfo{number}{3} (\bibinfo{year}{1982}),
  \bibinfo{pages}{410--425}.
\newblock


\bibitem[\protect\citeauthoryear{Zhao and Ubaka}{Zhao and Ubaka}{2004}]%
        {zhao2004transit}
\bibfield{author}{\bibinfo{person}{Fang Zhao} {and} \bibinfo{person}{Ike
  Ubaka}.} \bibinfo{year}{2004}\natexlab{}.
\newblock \showarticletitle{Transit network optimization-minimizing transfers
  and optimizing route directness}.
\newblock \bibinfo{journal}{\emph{Journal of Public Transportation}}
  \bibinfo{volume}{7}, \bibinfo{number}{1} (\bibinfo{year}{2004}),
  \bibinfo{pages}{4}.
\newblock


\bibitem[\protect\citeauthoryear{Zhao and Zeng}{Zhao and Zeng}{2006}]%
        {zhao2006simulated}
\bibfield{author}{\bibinfo{person}{Fang Zhao} {and} \bibinfo{person}{Xiaogang
  Zeng}.} \bibinfo{year}{2006}\natexlab{}.
\newblock \showarticletitle{Simulated annealing--genetic algorithm for transit
  network optimization}.
\newblock \bibinfo{journal}{\emph{Journal of Computing in Civil Engineering}}
  \bibinfo{volume}{20}, \bibinfo{number}{1} (\bibinfo{year}{2006}),
  \bibinfo{pages}{57--68}.
\newblock


\end{thebibliography}


\newpage
\appendix

\section{Extensions}\label{ssec:extensions}

\subsection{Relaxing trip optimality.}\label{ssec:relaxing-trip-optimality} {In this section, we describe how our algorithm and analysis can be modified if the trip-optimality assumption (Assumption 2) is relaxed. Specifically, we no longer assume that passengers must use the trip option which maximizes their value along that line; the \platform{} must now consider all possible ways in which a passenger can join each line. We refer to this variant of the problem as the Generalized \lineplanningproblem{} (\textsc{GRlpp}).

Given line $\ell \in \mathcal{L}$, we define a {\it sub-route} of $r_{\ell}$ to be any set of consecutive edges of $r_{\ell}$. Let $n_{\ell}$ be the size of the set of all sub-routes of $r_{\ell}$. We index sub-routes of $r_{\ell}$ as $r_{\ell}^{(i)}$ for $i \in [n_{\ell}]$. Let $v_{\ell p}^{(i)}$ denote the value associated with passenger $p$ traveling along sub-route $r_{\ell}^{(i)}$ of $r_{\ell}$. Passenger $p$ can be assigned to any sub-route $r_{\ell}^{(i)}$ for which $v_{\ell p}^{(i)} > 0$. 

We first define the notion of {\it trip-optimality gap}.

\begin{definition}[Trip-optimality gap]
The {\it trip-optimality gap} $\gamma$ characterizes the worst-case multiplicative gap between the optimal values of \lpp{} and \textsc{GRlpp}. Formally, let $\mathcal{I}$ denote the set of all instances for the Generalized \lineplanningproblem. For $I \in \mathcal{I}$, $OPT(I)$ and $\widehat{OPT}(I)$ respectively denote the value of the optimal solution to \lpp{} and \textsc{GLpp}.  $$ \gamma = \sup_{I \in \mathcal{I}} \frac{\widehat{OPT}(I)}{OPT(I)}.$$
\end{definition}

\begin{proposition}\label{prop:trip-optimality-gap}
The \lineplanningproblem{} has unbounded trip-optimality gap.
\end{proposition}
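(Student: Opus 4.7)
The plan is to construct a parametric family of instances $\{I_N\}_{N \ge 1}$ of \textsc{GRlpp} in which forcing trip-optimality collapses the achievable welfare by a factor of order $N$, so that $\gamma = \sup_I \widehat{OPT}(I)/OPT(I) = \infty$. The key intuition is that \lpp\ and \textsc{GRlpp} differ only in the freedom to assign passengers to sub-optimal sub-routes; I will therefore engineer each instance so that every passenger's \emph{optimal} sub-route is long and heavily shared, while her sub-optimal sub-routes are short, pairwise disjoint, and almost as valuable.

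Concretely, for each $N$ I would take a single candidate line $\ell$ with route $r_\ell = (v_0, v_1, \ldots, v_N)$ consisting of $N$ consecutive edges $e_i = (v_{i-1}, v_i)$, frequency $f_\ell = 1$, capacity $C = 1$, and budget $B = c_\ell$. I would introduce $N$ passengers $p_1, \ldots, p_N$, each with source $v_0$ and destination $v_N$, so that every $p_i$ has a rich set of feasible trip options indexed by boarding/alighting pairs $(v_a, v_b)$ with $0 \le a \le b \le N$. Since the paper only requires $v_p$ to be non-negative and non-decreasing in line frequency, I can specify values directly: the full-route option $(v_0, v_N)$ gets value $1$ for every $p_i$; the single-edge option $(v_{i-1}, v_i)$ gets value $1 - 1/N$ for $p_i$; every other trip option has value $0$. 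Edge weights can be set so that each of these trips finishes within the time window, e.g. $\tau_{e_j} = D/N$ with $T \ge D$.

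Under \lpp, trip-optimality forces every matched $p_i$ to use her optimal sub-route, the full-route one. Since all such sub-routes traverse $e_1$ and capacity on $e_1$ is $1$, at most one passenger is served, giving $OPT(I_N) = 1$. Under \textsc{GRlpp}, I can instead assign each $p_i$ to her single-edge sub-route on $e_i$; because the edges $e_1, \ldots, e_N$ are pairwise disjoint, all capacity constraints are satisfied simultaneously and $\widehat{OPT}(I_N) \ge N(1 - 1/N) = N - 1$. Hence $\widehat{OPT}(I_N)/OPT(I_N) \ge N - 1$, and letting $N \to \infty$ yields $\gamma = \infty$.

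The only nontrivial point is verifying that the prescribed value function is admissible. Since the paper imposes only non-negativity and monotonicity in frequency, and the latter is vacuous with a single fixed-frequency line, admissibility is immediate. I therefore expect no substantive technical obstacle; the conceptual content is that trip-optimality is \emph{not} a benign modeling choice precisely when bus capacity binds on the value-maximizing sub-routes of many passengers, and this same tension is what any refined analysis (e.g.\ bounding $\gamma$ under additional structural assumptions on $v_p$, as hinted at in the extensions) would have to control.
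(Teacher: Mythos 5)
Your construction is correct and is essentially the same as the paper's: a single line whose full-route option has value $1$ for every passenger while short, pairwise-disjoint single-edge options have value just below $1$, with $C=1$ so that trip-optimality forces all passengers onto a shared bottleneck edge and caps the welfare at $1$, whereas the relaxed problem achieves $\Theta(N)$. The only cosmetic difference is that the paper gives every passenger value $1/2$ on every single-edge sub-route (and then assigns them to distinct edges), while you hard-wire each passenger to her own edge with value $1-1/N$; both yield the same unbounded ratio.
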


\begin{proof}
Consider the setting where $|E| = |\mathcal{P}| = n-1$, $C = 1$, and $B$ is such that only one line $\ell$ at frequency 1 can be opened. Let $r_{\ell}^{(n)}$ denote the sub-route which uses all $n-1$ edges of $G$, and suppose $v_{\ell p}^{(n)} = 1$ for all $p \in \mathcal{P}$. Let $r_{\ell}^{(e)}$ denote the sub-route of $r_{\ell}$ which uses a single edge $e$, and suppose $v_{\ell p}^{(e)} = 1/2$ for all $e \in E, p \in \mathcal{P}$. Then, under the trip optimality assumption, the \lineplanningproblem{} has optimal value 1 (since all passengers must be served on {$r_{\ell}^{(n)}$} but $C = 1$). When this assumption is relaxed, however, the optimal value is {\it at least} $\frac{n-1}{2}$, achieved by having each passenger travel along a different edge.
\end{proof}

An unbounded trip-optimality gap would lead one to think that the more general, relaxed problem would require a fundamentally different approach from that of our algorithm. We however demonstrate the flexibility of our approach by proving that our algorithm can easily be modified for this setting, with {\it at most} a constant-factor loss.

We first introduce the following notation. Let ${S}_i$ denote a feasible assignment of passengers to {\it sub-route} $r_{\ell}^{(i)}$ of line $\ell$, for $i \in [n_{\ell}]$. Now, ${S} = ({S}_1,\ldots,{S}_{n_{\ell}})$ denotes a feasible assignment of passengers to {\it line} $\ell$. For ${S}$ to be feasible, $\{{S}_i\}$ must be disjoint subsets of $\mathcal{P}$ (i.e., a passenger can only be matched to one trip option), and the number of passengers using edge $e$ of $r_{\ell}$ must not exceed the capacity of the line. {Let ${\mathcal{I}_{\ell}}$ denote the set of feasible assignments of passengers to $\ell$.} For ease of notation, we use $p\in{S}$ if there exists $i \in [n_{\ell}]$ such that $p \in {S_i}$. 

We can still define an exponential-size configuration ILP for the Generalized \lineplanningproblem{}:

\begin{align}
{\widehat{P}:=} \qquad \max_{\left\{X_{\ell {S}}\right\}} \qquad
&  \sum_{\ell\in \mathcal{L}} \sum_{\substack{{S}\in {\mathcal{I}_{\ell}}}} \sum_{i \in [n_{\ell}]} \sum_{p \in {S}_i} v_{\ell p}^{(i)} X_{\ell {S}}  \notag\\
 \text{s.t.}\qquad & \sum_{\ell\in \mathcal{L}} c_\ell\left( \sum_{{S}\in \mathcal{I}_\ell} X_{\ell {S}}\right) \leq B \label{budget-config-lp}\\
 &    \sum_{{S}\in \mathcal{I}_\ell} X_{\ell{S}} \leq 1\qquad\qquad \forall \, \ell \in \mathcal{L} \label{one-set-per-line} \\
 &\sum_{\ell\in \mathcal{L}} \sum_{\substack{{S} \in {\mathcal{I}_{\ell}}:\\p\in {S}}} X_{\ell {S}} \leq 1 \qquad \forall \, p \in \mathcal{P} \label{cust-to-one-set} \\
&  X_{\ell {S}} \in \{0,1\} \qquad\qquad \forall \, \ell\in \mathcal{L}, {S} \in {\mathcal{I}_{\ell}}\notag
\end{align}

We can apply Algorithm~\ref{alg:rounding} to this problem. As before, however, we require a subroutine which (approximately) solves $\widehat{P}^{(\varepsilon)}$, the auxiliary configuration LP. 

In Section~\ref{sec:main-result} we showed that we can solve $\widehat{P}^{(\varepsilon)}$ by applying the ellipsoid method to the dual problem $\widehat{D}^{(\varepsilon)}$, { assuming access to a polynomial-time separation oracle}. To this end, we showed that an exact polynomial-time separation algorithm was within reach due to additional structure induced by trip optimality. 

We adopt a similar approach for the Generalized \lineplanningproblem. Consider the dual corresponding to $\widehat{P}^{(\varepsilon)}$, which we denote as before $\widehat{D}^{(\varepsilon)}$:

\begin{align*}
\widehat{D}^{(\varepsilon)} := \qquad \min_{\substack{\{q_{\ell}\}, \{\lambda_p\}, \alpha}}
 \qquad & \sum_{\ell \in \mathcal{L}} q_\ell + \sum_{p \in \mathcal{P}} \lambda_p +B(1-\varepsilon)\alpha\\
 \text{s.t.} \qquad  &   q_\ell + \alpha c_\ell \geq \sum_{i \in [n_{\ell}]} \sum_{p \in {S}_i} \left(v_{\ell p}^{(i)} - \lambda_p\right) \qquad \forall \, \ell \in \mathcal{L}, {S} \in {\mathcal{I}_{\ell}} \\
 &    q_\ell \geq 0 \quad \forall \, \ell \in \mathcal{L},\quad \lambda_p \geq 0 \quad \forall \, p \in \mathcal{P}, \quad \alpha \geq 0
\end{align*}

Recall, for fixed $\ell$, given $q_{\ell}, c_{\ell}, \{\lambda_{p}\}$, a separation algorithm for $\widehat{D}^{(\varepsilon)}$ either certifies that $q_\ell + \alpha c_\ell \geq \sum_{i \in [n_{\ell}]} \sum_{p \in {S}_i} \left(v_{\ell p}^{(i)} - \lambda_p\right) \, \forall {S} \in {\mathcal{I}_{\ell}}$, or returns ${S}$ such that this constraint is violated. {This can be done by solving the} following combinatorial optimization problem:

$$\max_{{S} \in {\mathcal{I}_{\ell}}}\sum_{i \in [n_{\ell}]}\sum_{p \in {S}_i} \left(v_{\ell p}^{(i)} - \lambda_p\right).$$

The following lemma follows from~\citet{SAP}.

\begin{lemma}[\citep{SAP}]\label{prop:beta-approx}
A $\beta$-approximate separation algorithm for $\widehat{D}^{(\varepsilon)}$ implies a $\beta$-approximation for $\widehat{D}^{(\varepsilon)}$.
\end{lemma}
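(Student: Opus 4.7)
The plan is to invoke the classical technique of Carr and Vempala~\citep{carr2000randomized}, adapted to configuration-style LPs, which shows that an approximate separation oracle for the dual suffices to approximately solve the LP via the ellipsoid method. The key obstacle compared to Theorem~\ref{thm:poly-time-separation} is that, with trip-optimality relaxed, the single-line subproblem becomes a capacitated interval scheduling problem whose LP relaxation is no longer guaranteed to be integral, so Algorithm~\ref{alg:separation} only yields a $\beta$-approximate oracle rather than an exact one.

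My first step is to introduce a \emph{shrunken dual} $\widehat{D}^{(\varepsilon)}_\beta$, identical to $\widehat{D}^{(\varepsilon)}$ except that each configuration constraint is weakened by a factor of $\beta$:
\[
q_\ell + \alpha c_\ell \;\geq\; \beta \sum_{i \in [n_\ell]} \sum_{p \in S_i}\!\left(v_{\ell p}^{(i)} - \lambda_p\right), \quad \forall \, \ell \in \mathcal{L},\, S \in \mathcal{I}_\ell.
\]
Since $\beta \leq 1$, the feasible region of $\widehat{D}^{(\varepsilon)}_\beta$ contains that of $\widehat{D}^{(\varepsilon)}$. A short LP-duality calculation then shows that $OPT(\widehat{D}^{(\varepsilon)}_\beta) \geq \beta \cdot OPT(\widehat{D}^{(\varepsilon)})$: taking any optimal primal solution $\{X_{\ell S}^\star\}$ of $\widehat{P}^{(\varepsilon)}$, the same $X^\star$ is feasible for the dual of $\widehat{D}^{(\varepsilon)}_\beta$ (the shrinkage only tightens a per-passenger constraint by a factor $\beta$, which is automatically satisfied) and produces objective value $\beta \cdot OPT(\widehat{P}^{(\varepsilon)})$.

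Next, I would show that the $\beta$-approximate combinatorial oracle serves as an \emph{exact} separator for $\widehat{D}^{(\varepsilon)}_\beta$. Given $(q,\lambda,\alpha)$, run the approximate oracle to obtain $S^\star$ with $\sum_{i,p \in S_i^\star}(v_{\ell p}^{(i)} - \lambda_p) \geq \beta \cdot \max_{S \in \mathcal{I}_\ell} \sum_{i,p \in S_i}(v_{\ell p}^{(i)} - \lambda_p)$. If this value exceeds $q_\ell + \alpha c_\ell$, then $S^\star$ is a genuinely violated constraint of both $\widehat{D}^{(\varepsilon)}$ and $\widehat{D}^{(\varepsilon)}_\beta$; otherwise, dividing by $\beta$ certifies $\max_{S} \sum(\cdots) \leq (q_\ell + \alpha c_\ell)/\beta$, so every constraint of $\widehat{D}^{(\varepsilon)}_\beta$ is satisfied at $(q,\lambda,\alpha)$. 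With this exact oracle for $\widehat{D}^{(\varepsilon)}_\beta$, the ellipsoid method solves $\widehat{D}^{(\varepsilon)}_\beta$ in polynomial time while calling the oracle only polynomially often.

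The final step is to invoke the Carr--Vempala construction: solving the primal of $\widehat{D}^{(\varepsilon)}_\beta$ restricted to the polynomially many variables corresponding to the constraints encountered by the ellipsoid method yields an optimal primal solution of polynomial support whose objective equals $OPT(\widehat{D}^{(\varepsilon)}_\beta) \geq \beta \cdot OPT(\widehat{D}^{(\varepsilon)})$. The main subtlety I anticipate, and the step requiring the most care, is ensuring that this recovered primal solution is feasible for $\widehat{P}^{(\varepsilon)}$ (or that its slight infeasibility in the per-passenger constraint can be absorbed with no additional loss), so that the approximation ratio remains exactly $\beta$ rather than degrading to $\beta^2$. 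This follows precisely because the per-passenger constraint in the dual of $\widehat{D}^{(\varepsilon)}_\beta$ is $\sum_{\ell, S \ni p} X_{\ell S} \leq 1/\beta$, which is easier to enforce, and the complementary-slackness analysis from~\citet{SAP} confirms the loss remains the single factor $\beta$ claimed.
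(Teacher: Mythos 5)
The paper offers no proof of this lemma at all---it is imported directly from \citet{SAP}---so what you have written is a reconstruction of the argument in that reference, and your skeleton is the right one: introduce the shrunken dual $\widehat{D}^{(\varepsilon)}_\beta$, observe that its optimum is sandwiched between $\beta\cdot OPT(\widehat{P}^{(\varepsilon)})$ and $OPT(\widehat{P}^{(\varepsilon)})$, run the ellipsoid method with the approximate oracle, and recover a primal solution supported on the polynomially many configurations encountered. Two of your steps, however, are not correct as written. First, when the approximate oracle returns $S^\star$ with $\mathrm{val}(S^\star) > q_\ell + \alpha c_\ell$, this exhibits a violated constraint of $\widehat{D}^{(\varepsilon)}$ but \emph{not} necessarily of $\widehat{D}^{(\varepsilon)}_\beta$: the latter would require $\beta\,\mathrm{val}(S^\star) > q_\ell + \alpha c_\ell$, which does not follow. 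Your oracle is therefore not an exact separator for $\widehat{D}^{(\varepsilon)}_\beta$ (its cuts can slice off points of $\widehat{D}^{(\varepsilon)}_\beta$'s feasible region), and the ellipsoid method does not ``solve $\widehat{D}^{(\varepsilon)}_\beta$.'' What the run actually certifies is a threshold value $z^\star$ with $OPT(\widehat{D}^{(\varepsilon)}_\beta)\le z^\star\le OPT(\widehat{D}^{(\varepsilon)})$---every accepted point lies in $\widehat{D}^{(\varepsilon)}_\beta$, every cut is valid for $\widehat{D}^{(\varepsilon)}$---and this sandwich is all the argument needs, but it should be stated that way.

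Second, and more substantively, your recovery step is set up on the wrong LP, which is what creates the feasibility worry you flag at the end. The LP to solve is the restriction of the \emph{original} primal $\widehat{P}^{(\varepsilon)}$ to the set $\mathcal{C}$ of configurations returned as violated constraints during a run at objective level $z$ just below $z^\star$: that run proves infeasibility using only cuts indexed by $\mathcal{C}$, so $OPT(\widehat{D}^{(\varepsilon)}\vert_{\mathcal{C}}) > z$, and by strong duality $OPT(\widehat{P}^{(\varepsilon)}\vert_{\mathcal{C}}) = OPT(\widehat{D}^{(\varepsilon)}\vert_{\mathcal{C}}) > z \ge \beta\cdot OPT(\widehat{P}^{(\varepsilon)})$, with the recovered solution feasible for $\widehat{P}^{(\varepsilon)}$ by construction. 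There is no $1/\beta$ slack to absorb and no risk of a $\beta^2$ loss. By instead solving ``the primal of $\widehat{D}^{(\varepsilon)}_\beta$'' you obtain a point that only satisfies the relaxed constraint $\sum_{\ell}\sum_{S\ni p} X_{\ell S}\le 1/\beta$, and the appeal to an unspecified ``complementary-slackness analysis'' to repair it is where your proof currently has a hole; the fix is to change which LP you solve, not to add an argument.
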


Thus, given a constant-factor approximation for the separation algorithm, a constant-factor approximation for the Generalized \lineplanningproblem{} follows. 

\begin{corollary}\label{prop:glpp}
Let $\mathcal{A}$ be a $\beta$-approximate separation algorithm for $\widehat{D}^{(\varepsilon)}$. Then, using $\mathcal{A}$ as a sub-routine to Algorithm~\ref{alg:rounding} guarantees a $\left((1-\frac1e)\beta-\varepsilon\right)$-approximation for the Generalized \lineplanningproblem{} that is budget-respecting with probability at least $1-e^{-\varepsilon^2 B/3c_{\max}}$.
\end{corollary}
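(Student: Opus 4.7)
The plan is to mirror the proof of Theorem~\ref{thm:main-thm} step by step, tracking an extra multiplicative factor of $\beta$ that is introduced when the exact separation oracle (Algorithm~\ref{alg:separation}) is replaced by the $\beta$-approximate oracle $\mathcal{A}$. Concretely, I would decompose the argument into four stages matching the four stages of Algorithm~\ref{alg:rounding}: (i) (approximately) solving the configuration LP $\widehat{P}^{(\varepsilon)}$, (ii) randomized rounding, (iii) re-assignment of conflicting passengers, and (iv) aggregation of lines sharing the same route.

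For stage (i), I would first invoke Lemma~\ref{prop:beta-approx}: running the ellipsoid method on the dual $\widehat{D}^{(\varepsilon)}$ using $\mathcal{A}$ as the separation subroutine yields (via the complementary primal restricted to the polynomially many columns generated) a feasible primal solution $\{X^{(\varepsilon)}_{\ell S}\}$ whose objective value is at least $\beta\cdot OPT^{(\varepsilon)}$. Combining this with Proposition~\ref{prop:loss-from-epsilon} gives the chain
\begin{equation*}
    \sum_{\ell,S,i,p\in S_i} v_{\ell p}^{(i)} X^{(\varepsilon)}_{\ell S} \;\geq\; \beta\cdot OPT^{(\varepsilon)} \;\geq\; \beta(1-\varepsilon)\cdot \widehat{OPT},
\end{equation*}
where $\widehat{OPT}$ denotes the optimum of \textsc{GRlpp}.

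For stage (ii)--(iii), I would observe that the randomized rounding and re-assignment steps of Algorithm~\ref{alg:rounding} are oblivious to whether the fractional solution came from an exact or approximate LP solver: they only use the feasibility properties (constraints~\eqref{one-set-per-line} and~\eqref{cust-to-one-set}) of $\{X^{(\varepsilon)}_{\ell S}\}$. Thus the argument behind Proposition~\ref{thm:rounding_value} (adapted from~\citep{SAP}) applies verbatim to the generalized configuration LP, and the expected post-reassignment value $\widetilde{ALG}$ satisfies
\begin{equation*}
    \widetilde{ALG} \;\geq\; \Bigl(1-\tfrac1e\Bigr)\sum_{\ell,S,i,p\in S_i} v_{\ell p}^{(i)} X^{(\varepsilon)}_{\ell S} \;\geq\; \Bigl(1-\tfrac1e\Bigr)\beta(1-\varepsilon)\,\widehat{OPT}.
\end{equation*}
For stage (iv), Proposition~\ref{prop:final-step-doesnt-matter} extends unchanged, since its proof only relies on (a) subadditivity of costs and (b) monotonicity of passenger values in line frequency, both of which remain assumptions of the generalized model. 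Hence $ALG\geq \widetilde{ALG}$ and the cost is no larger than before aggregation. Bounding $(1-\tfrac1e)\beta \leq 1$ (which holds since $\beta\leq 1$) gives $(1-\tfrac1e)\beta(1-\varepsilon)\,\widehat{OPT} \geq \bigl((1-\tfrac1e)\beta-\varepsilon\bigr)\widehat{OPT}$, producing the claimed approximation ratio.

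Finally, the high-probability budget guarantee of Corollary~\ref{cor:cost} transfers directly: its proof depends only on the fact that the fractional solution being rounded satisfies the scaled budget constraint~\eqref{budget-constraint-relaxation} and on a Chernoff bound over the independent line-indicator variables. Both properties hold for $\{X^{(\varepsilon)}_{\ell S}\}$ regardless of how it was produced. The main subtlety I would flag is making sure the approximate dual solution returned by the ellipsoid method can be converted into a feasible primal solution of value at least $\beta\cdot OPT^{(\varepsilon)}$ on a \emph{polynomial-size} column set; here I would cite the column-generation argument of~\citet{carr2000randomized} used in Theorem~\ref{thm:poly-time-separation} and verify that it is insensitive to the oracle being approximate rather than exact, which is exactly the content of Lemma~\ref{prop:beta-approx}.
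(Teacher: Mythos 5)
Your proposal is correct and matches the paper's (largely implicit) argument: the paper states this corollary without a separate proof, intending exactly the assembly you carry out — Lemma~\ref{prop:beta-approx} to absorb the $\beta$ loss in solving $\widehat{P}^{(\varepsilon)}$, then Propositions~\ref{prop:loss-from-epsilon}, \ref{thm:rounding_value}, \ref{prop:final-step-doesnt-matter} and Corollary~\ref{cor:cost}, whose proofs are insensitive to how the feasible fractional solution was obtained. Your final inequality $(1-\tfrac1e)\beta(1-\varepsilon)\geq(1-\tfrac1e)\beta-\varepsilon$ is the right way to land on the stated constant.
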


It suffices to show that such a constant-factor approximation exists. To see this, we show that the problem of finding a separation algorithm for $\widehat{D}^{(\varepsilon)}$ reduces to an instance of the Weighted Job Interval Selection problem (\textsc{Wjis}), for which a $\tfrac18$-approximation exists~\citep{Erlebach2003}. Establishing this analogy then completes the argument that we can use our algorithm to obtain a constant-factor approximation for the Generalized \lineplanningproblem{}.

\begin{definition}[Weighted Job Interval Selection Problem] Consider a set of $n$ jobs, $m$ machines, and a set of intervals $\mathcal{I}$ of the real line. Each job $j$ is defined by a set of feasible intervals $I_j \in \mathcal{I}$ in which it can be processed, as well as associated weights $\{w_{ij}\}_{i\in I_j}$. The goal is to select a subset of the intervals of maximum weight such that: $(i)$ at most one interval is selected for each job, and $(ii)$ at any point on the real line, no more than $m$ jobs can be scheduled.
\end{definition}

The analogy between the Generalized \lineplanningproblem{} and the Weighted Job Interval Selection Problem is as follows. Each line $\ell \in \mathcal{L}$ corresponds to the real line, and sub-route $r_{\ell}^{(i)}$ corresponds to an interval $i$ of the real line. Each passenger $p$ corresponds to a job $j$, and $v_{\ell p}^{(i)}-\lambda_p$ corresponds to the weight of processing job $j$ on interval $i$. The bus capacity $C$ is the number of machines. Thus, from any feasible solution to \textsc{Wjis} we can construct an assignment of passengers to sub-routes $\{r_{\ell}^{(i)}\}_{i \in [n_{\ell}]}$ such that each passenger is only assigned to one sub-route and the capacity $C$ of a bus on $r_{\ell}$ is nowhere exceeded. Such an assignment is thus feasible for line $\ell$, and any $\beta$-approximation for \textsc{Wjis} also gives us a $\beta$-approximate separation oracle for $\widehat{D}^{(\varepsilon)}$.} 

We briefly note that {$n_{\ell}$ is polynomial in $n$ since we've assumed that the maximum duration (weight) of a route is upper bounded by $D$, and the edge travel times are bounded below by a constant $\tau_{\min} > 0$.}
 Thus, since  Algorithm~\ref{alg:rounding} runs in polynomial time for the \lineplanningproblem{}, it also runs in polynomial time for the Generalized \lineplanningproblem.

\subsection{Travel times.}\label{ssec:travel-times}
We now show that abstracting away notions of travel and clock times is indeed without loss of generality, and that all results continue to hold for a more realistic, time-centric model. 

Let $T$ denote the length of the discrete time window during which the \platform{}  must serve the trip requests. A passenger is now defined by her source and destination nodes $s_p$ and $d_p$, as well as the time of her trip request $t_p$. Let $\mathcal{P}_T$ denote the set of all passengers. Clearly, $|\mathcal{P}_T| = |\mathcal{P}|$. In the same vein, a line is now defined by a route, a frequency, and a start time. Formally, the set of all possible lines the \platform{}  can operate is $\mathcal{L}_T = \left\{(r,f,t) \mid (r,f,t) \in \mathcal{R}\times\mathbb{N}\times [T]\right\}$. In this case, we have $|\mathcal{L}_T| = T|\mathcal{L}|$. Given the set of travel times $\{\tau_{ij}\}$, the \platform{}  can pre-compute the bus schedule induced by each line (e.g., if $(i,j) \in r_{\ell}$, and the bus leaves node $i$ at time $t$, then it reaches node $j$ at time $t + \tau_{ij}$). With slight abuse of notation, let $t_{\ell i}$ denote the time at which line $\ell$ reaches node $i$. Then, the only feasible trip options for passenger $p$ via line $\ell$ are $\omega = (s_p, i, j, d_p)$ such that $t_p + \tau_{s_p,i}^\star \leq t_{\ell i}$, where $\tau_{s_p,i}^\star$ is the car travel time from $s_p$ to $i$ (i.e., the duration of the shortest path between the two nodes). Given the bus schedule $\{t_{\ell i}\}$ and the passenger set $\mathcal{P}_T$, the \platform{}  can then pre-compute the passenger-line values $\{v_{\ell p}\}$. The size of each input to Algorithm~\ref{alg:rounding} has increased at most by a constant factor $T$. Hence, our algorithm still runs in polynomial time under this time-sensitive construction.

\section{Omitted proofs}

\subsection{Limits of approximation for the \lineplanningproblem}\label{app:hardness-proofs}

\subsubsection{Necessity of a candidate set of lines}

\begin{proof}[Proof of Proposition~\ref{prop:helper}]
 Since there is an edge of cost $t$ between $r$ and $g_i$, by definition of a min-cost path, $t_{p_i}^\star \leq t$. Thus, $(1-\frac{\varepsilon}{t})t_{p_i}^\star \leq (1-\frac{\varepsilon}{t})t = t-\varepsilon$. This then implies that $v_{p_i}(\omega_{\ell, p_i}) = 0$ for $\omega_{\ell, p_i}$ such that $t^{\text{car}}(\omega_{\ell, p_i}) > t-\varepsilon$. 
\end{proof}

\begin{proof}[Proof of Proposition~\ref{prop:hardness-helper}]
Consider passenger $p_i$. {We first claim that it is without loss of generality to assume that direct travel by car is completed via edge $(r,g_i)$. This is due to the fact that the cost of each edge of $G$ is lower bounded by $\varepsilon > 0$, and the cost of $(j,g_i)$ is $t-\varepsilon$ for all $j \in S_i$. Thus, traveling from $r$ to $g_i$ via $j \in S_i$ costs {\it at least} $t$, which is exactly the cost of the trip which uses edge $(r,g_i)$.} 

The fact that a bus line cannot be routed via edge $(r,g_i)$ follows from the fact that the cost of $(r,g_i)$ is $t > D$, and as such is infeasible {by bus since the maximum cost of a bus line is $D$}.

{The fact that a bus line cannot be routed via edge $(j,g_i)$ follows from the fact that the cost of $(j,g_i)$ is $t-\varepsilon > D$ for all $j \in S_i$. Now, suppose that the passenger travels via line $\ell$, and let $j_{-i}$ denote the vertex at which $p_i$ leaves the line and begins her journey by car. If $j_{-i} \notin S_i$, then reaching $g_i$ by car must incur a cost of at least $t$ (a cost of at least $\varepsilon$ to reach a node $j'\in S_i$, then a cost $t-\varepsilon$ to reach $g_i$ from $j'$). Thus, the value for this trip option is 0 by Proposition~\ref{prop:helper}, and the passenger would opt for a direct travel by car via edge $(r,g_i)$.}

\end{proof}

\begin{proof}[Proof of Proposition~\ref{prop:collect-pos-val}]

{The proposition follows immediately from  Proposition~\ref{prop:hardness-helper}. The only feasible options for passenger $p_i$ which use a bus line and collect strictly positive value are those for which a node in $S_i$ can be reached by bus.}
\end{proof}

\subsubsection{Inefficacy of standard approximation techniques}

\begin{proof}[Proof of Proposition~\ref{prop:not-submodular}]

Let $S$ denote the set of lines opened under $\by$. With mild abuse of notation, we use $w(S)$ to denote the welfare induced by this set of lines.

Consider the setting with three passengers $p_1, p_2, p_3$,  $\mathcal{L} = \{\ell_1, \ell_2, \ell_3\}$, $f_{\ell} = 1 \, \forall \, \ell \in \mathcal{L}$ and $C = 1$. The value functions associated with each passenger are as follows:
\begin{align*}
    v_{\ell 1} = \begin{cases}
    1 &\quad \text{if } \ell = \ell_1 \\
    1 &\quad \text{if } \ell = \ell_2 \\
    0 &\quad \text{if } \ell = \ell_3
    \end{cases}, \qquad v_{\ell 2} = \begin{cases}
    1 &\quad \text{if } \ell = \ell_1 \\
    0 &\quad \text{if } \ell = \ell_2 \\
    1 &\quad \text{if } \ell = \ell_3
    \end{cases}, \qquad v_{\ell 3} = \begin{cases}
    1 &\quad \text{if } \ell = \ell_1 \\
    0 &\quad \text{if } \ell = \ell_2 \\
    0 &\quad \text{if } \ell = \ell_3.
    \end{cases}
\end{align*}
Passengers $p_1$ and $p_2$ use disjoint edges of $r_{\ell_1}$. Passenger $p_3$, on the other hand, uses the same edges of $r_{\ell_1}$ as $p_1$ and $p_2$. Thus, any feasible assignment of $p_3$ to $\ell_1$ requires $p_3$ to be its sole passenger.  

Let $S_1=\{\ell_1\}$. Then, $w(S_1) = 2$, achieved by assigning $p_1$ and $p_2$ to $\ell_1$. Moreover, $w(S_1\cup \{\ell_3\}) = 2$, by assigning $p_1$ and $p_2$ to $\ell_1$, or $p_1$ to $\ell_1$ and $p_2$ to $\ell_3$.  Now, let $S_2 = \{\ell_1, \ell_2\}$. Again, by assigning $p_1$ and $p_2$ to $\ell_1$, we obtain $w(S_2)=2$. Moreover, $w(S_2\cup \{\ell_3\}) = 3$, obtained by assigning $p_1$ to $\ell_2$, $p_2$ to $\ell_3$ and $p_3$ to $\ell_1$.

Since $S_1 \subset S_2$ and $w(S_1 \cup \{\ell_3\}) - w(S_1) < w(S_2 \cup \{\ell_3\}) - w(S_2)$, $w$ is not submodular. 
\end{proof}

\begin{proof}[Proof of Proposition~\ref{prop:integrality-gap}]
Consider passengers $p_1, p_2$ and lines $\ell_1, \ell_2$ such that $$v_{\ell_1,p_1} = v_{\ell_2,p_2} = 1 \quad ,  \quad v_{\ell_2,p_1} = v_{\ell_1,p_2} = 0$$
with $r_{\ell_1}$ and $r_{\ell_2}$ non-overlapping. Suppose moreover that $c_{\ell_1} = c_{\ell_2} = 1$, and $B = 2-\varepsilon$, for some $\varepsilon \in (0,1)$. 

Since the ILP can only open a single line, its optimal value is $OPT = 1$. An optimal solution to the LP relaxation of the ILP, on the other hand, is such that $y_{\ell_1}^\star = 1, y_{\ell_2}^\star = 1-\varepsilon$, and thus its optimal value is $\widehat{OPT} = 2-\varepsilon$. Taking $\varepsilon \to 0$ proves the claim.
\end{proof}

\subsection{Main result: a $1-\frac1e-\varepsilon$ approximation algorithm}\label{app:main-result-proofs}

\begin{proof}[Proof of Lemma~\ref{lem:correctness}]
Let $A \in \mathbb{R}^{|r_{\ell}| \times |\mathcal{P}|}$ denote the constraint matrix corresponding to~\eqref{single_line_subproblem}.  $A$ is such that $A_{e, p} = 1$ if passenger $p$ uses edge $e$, and 0 otherwise. Since a passenger exclusively uses {\it consecutive} edges of $r_{\ell}$, the columns of $A$ have the consecutive-ones property. Thus, $A$ is totally unimodular. Since $C$ and $f_{\ell}$ are integral by assumption, these two facts together imply that LP-SEP is integral.
\end{proof}

\begin{proof}[Proof of Proposition~\ref{thm:rounding_value}~\citep{SAP}]
Let $ALG(p)$ denote passenger $p$'s expected contribution to the objective in the solution returned by our algorithm. To prove the approximation guarantee, it suffices to show the following:

$$ALG(p) \geq \left(1-\frac1e\right)\sum_{\ell \in \mathcal{L}}\sum_{S \in \mathcal{I}_{\ell}:p\in S} {v_{\ell p}}X_{\ell S}^{(\varepsilon)} \quad \forall \, p \in \mathcal{P}$$
where $\left\{X_{\ell S}^{(\varepsilon)}\right\}$ is the solution to $\widehat{P}^{(\varepsilon)}$. Summing over all $p$ and using $\sum_p \sum_{\ell \in \mathcal{L}}\sum_{S \in \mathcal{I}_{\ell}:p\in S} {v_{\ell p}}X_{\ell S}^{(\varepsilon)} = OPT^{(\varepsilon)}$ completes the proof of the result.

For each passenger $p\in \mathcal{P}$, let $Y_{\ell p} = \sum_{S \in \mathcal{I}_{\ell}: p \in S}X_{\ell S}^{(\varepsilon)}$. Sort the lines for which $Y_{\ell p} > 0$ in decreasing order of $v_{\ell p}$. Let $\{\ell_1,\ell_2,\ldots,\ell_k\}$ denote these lines, with $v_{\ell_1, p} \geq v_{\ell_2, p} \geq \ldots v_{\ell_k, p}$.

After rounding and re-assignment (R\&R), passenger $p$ is assigned to line $\ell_1$ if any set containing $p$ is assigned to $\ell_1$. Thus, $p$ is assigned to $\ell_1$ with probability $Y_{\ell_1, p}$. If no set containing passenger $p$ is assigned to $\ell_1$ after R\&R, then we look to $\ell_2$. The probability that a set containing $p$ is assigned to $\ell_2$ after R\&R is $Y_{\ell_2, p}$. Thus, $p$ is assigned to $\ell_2$ with probability $(1-Y_{\ell_1, p})Y_{\ell_2,p}$. It follows that, for all $k' \leq k$, passenger $p$ is assigned to $k'$ with probability $\prod_{i=1}^{k'-1}(1-{Y_{\ell_i, p}})Y_{k'}$. Hence, we have
$$ALG(p) = \sum_{k'=1}^k v_{\ell_{k'} p} Y_{k'} \left( \prod_{i=1}^{k'-1}(1-Y_{\ell_i, p})\right).$$

Lemma~\ref{lem:main-techical-lemma} relates $ALG(p)$ to the contribution of passenger $p$ {\it before} rounding and re-assignment, $\sum_{k'=1}^k v_{\ell_{k'},p} Y_{\ell_{k'},p}$.

\begin{lemma}[\citep{SAP}]\label{lem:main-techical-lemma}
Suppose $Y_{\ell p} \geq 0$ for all $\ell \in \mathcal{L}$, $\sum_{\ell \in \mathcal{L}}Y_{\ell p} \leq 1$, and $v_{\ell_1,p}\geq v_{\ell_2, p} \geq \ldots \geq v_{\ell_k, p} \geq 0$. Then
$$\sum_{k'=1}^k {v_{\ell_{k'} p}Y_{k'}}\left(\prod_{i=1}^{k'-1}(1-Y_{\ell_i, p})\right) \geq \left(1-(1-\frac1L)^L \right)\sum_{k'=1}^k v_{\ell_{k'},p} Y_{\ell_{k'},p}.$$

\end{lemma}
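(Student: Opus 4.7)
The plan is a ``threshold decomposition followed by AM--GM'' argument. Writing $v_i := v_{\ell_i,p}$ and $Y_i := Y_{\ell_i,p}$ for brevity, I would first reduce the lemma to a pointwise statement about prefixes $\{1,\dots,m\}$ and then settle that statement with AM--GM and concavity.

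\emph{Threshold decomposition.} Using the layer-cake formula $v_i = \int_0^\infty \mathbf{1}[v_i \geq t]\,dt$, both sides split as an integral over $t \geq 0$:
\begin{equation*}
\sum_{i=1}^k v_i Y_i = \int_0^\infty \sum_{i \in I_t} Y_i \,dt, \qquad \sum_{i=1}^k v_i Y_i \prod_{j<i}(1-Y_j) = \int_0^\infty \sum_{i \in I_t} Y_i \prod_{j<i}(1-Y_j)\,dt,
\end{equation*}
where $I_t = \{i : v_i \geq t\}$. Because $v_1 \geq v_2 \geq \dots \geq v_k$, the set $I_t$ is always a \emph{prefix} $\{1,\dots,m(t)\}$ with $m(t) \leq k \leq L$, and in particular all indices $j < i$ appearing in $\prod_{j<i}(1-Y_j)$ for $i \in I_t$ also lie in $I_t$. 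It therefore suffices to establish, for every prefix length $m \leq L$, the pointwise inequality
\begin{equation*}
\sum_{i=1}^m Y_i \prod_{j=1}^{i-1}(1-Y_j) \;\geq\; \bigl(1 - (1-1/L)^L\bigr)\sum_{i=1}^m Y_i,
\end{equation*}
under the constraints $Y_i \geq 0$ and $s := \sum_{i=1}^m Y_i \leq 1$ (the latter inherited from $\sum_\ell Y_{\ell p} \leq 1$).

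\emph{Reducing the product via telescoping and AM--GM.} The LHS telescopes: $\sum_{i=1}^m Y_i \prod_{j<i}(1-Y_j) = 1 - \prod_{i=1}^m(1-Y_i)$. Applying AM--GM to the non-negative numbers $1-Y_1,\dots,1-Y_m$ gives $\prod_{i=1}^m(1-Y_i) \leq \bigl(1 - s/m\bigr)^m$, hence the LHS is at least $1 - (1-s/m)^m$. Define $f_m(s) := 1 - (1-s/m)^m$ on $[0,1]$; its derivative $(1-s/m)^{m-1}$ is non-increasing, so $f_m$ is concave with $f_m(0)=0$. Concavity yields $f_m(s) \geq s\, f_m(1) = s\bigl(1-(1-1/m)^m\bigr)$ for $s \in [0,1]$.

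\emph{Monotonicity in $m$ to get the universal constant.} The sequence $(1-1/m)^m$ is well-known to be increasing in $m$, so $1-(1-1/m)^m$ is decreasing in $m$; since $m \leq L$, we obtain $1-(1-1/m)^m \geq 1-(1-1/L)^L$. Chaining the estimates,
\begin{equation*}
\sum_{i=1}^m Y_i \prod_{j<i}(1-Y_j) \;\geq\; 1 - (1-s/m)^m \;\geq\; s\bigl(1-(1-1/L)^L\bigr) \;=\; \bigl(1-(1-1/L)^L\bigr)\sum_{i=1}^m Y_i,
\end{equation*}
which is the desired pointwise inequality; integrating back over $t$ completes the proof. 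The main conceptual hurdle is the very first step: noticing that sorting the $v_i$ makes each superlevel set $I_t$ a prefix, which is what allows the inner products $\prod_{j<i}(1-Y_j)$ to ``restrict'' cleanly under the threshold decomposition. Once that reduction is in hand, the remaining estimates are entirely standard.
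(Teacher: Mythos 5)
Your proof is correct. Note that the paper itself does not prove this lemma --- it is imported verbatim from the cited reference \citep{SAP} --- so the only meaningful comparison is with that source. Your argument is a valid, self-contained derivation: the layer-cake decomposition legitimately reduces everything to prefixes because the $v_i$ are sorted (so every superlevel set is a prefix and the products $\prod_{j<i}(1-Y_j)$ restrict cleanly); the telescoping identity $\sum_{i=1}^m Y_i\prod_{j<i}(1-Y_j)=1-\prod_{i=1}^m(1-Y_i)$ is exact; AM--GM applies since $\sum_i Y_i\le 1$ forces each $Y_i\le 1$; and the concavity-through-the-origin step $f_m(s)\ge s\,f_m(1)$ together with the monotonicity of $(1-1/m)^m$ delivers the uniform constant $1-(1-1/L)^L$ for all prefix lengths $m\le k\le L$. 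This is essentially the same route as the original proof in \citep{SAP}, which performs the reduction to prefixes via Abel summation on the differences $v_{\ell_i,p}-v_{\ell_{i+1},p}$ rather than a continuous threshold integral; the two decompositions are interchangeable, and the remaining telescoping/AM--GM/convexity steps coincide. Your version in fact establishes the slightly stronger constant $1-(1-1/k)^k$ with $k$ the number of lines of positive mass, which only helps.
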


Using the fact that $(1-(1-\frac{1}{L})^L)\geq 1-\frac1e$ for all $L \geq 1$ completes the proof of Proposition~\ref{thm:rounding_value}.
\end{proof}

\begin{proof}[Proof of Proposition~\ref{prop:cost-characterization}]
Let $\widetilde{Y}_{\ell}$ be the indicator variable denoting the event that line $\ell$ was opened {\it before} the re-assignment step, and let $Y_{\ell}$ denote the final line status, after the aggregation step. Let $c(Y)$ and $c(\widetilde{Y})$ denote the total costs associated with $Y$ and $\widetilde{Y}$, respectively. Between the re-assignment step and the aggregation step, the cost of the solution could only have decreased, since lines were potentially closed. Similarly, by Proposition~\ref{prop:final-step-doesnt-matter}, the cost of the solution could only have decreased after the aggregation step. Thus, we have $c(Y) \leq c(\widetilde{Y})$, and\begin{align*}
\mathbb{E}\left[c(Y)\right] \leq \mathbb{E}\left[c(\widetilde{Y})\right] = \sum_{\ell \in \mathcal{L}} c_{\ell}\mathbb{P}\left[\widetilde{Y}_{\ell} = 1\right] = \sum_{\ell \in \mathcal{L}} c_{\ell}\left(\sum_{S \in \mathcal{I}_{\ell}}X_{\ell S}^{(\varepsilon)}\right) \leq B(1-\varepsilon),
\end{align*}
where the second inequality is by feasibility of $X_{\ell S}^{(\varepsilon)}$. Thus, the budget constraint is satisfied in expectation.

We now prove the second part of the claim:
\begin{align}
    \mathbb{P}\left(\sum_{\ell\in \mathcal{L}} c_\ell \, Y_{\ell}\geq (1+\delta)(1-\varepsilon)B\right) &{\leq}  \mathbb{P}\left(\sum_{\ell\in \mathcal{L}} c_\ell \, \widetilde{Y}_{\ell}\geq (1+\delta)(1-\varepsilon)B\right) \notag \\
    &= \mathbb{P}\left(\sum_{\ell\in \mathcal{L}} \frac{c_\ell}{c_{\max}} \, \widetilde{Y}_{\ell}\geq (1+\delta)\frac{(1-\varepsilon)B}{c_{\max}}\right) \notag
    \\&{\leq} e^{-\delta^2(1-\varepsilon)B/3c_{\max}} \label{eq:hoeff}
\end{align}
where~\eqref{eq:hoeff} follows from an application of the Chernoff bound to the independent random variables $\left\{\frac{c_{\ell}}{c_{\max}}\widetilde{Y}_{\ell}\right\}_{\ell \in \mathcal{L}}$, and uses the fact that $\mathbb{E}\left[\sum_{\ell}\frac{c_{\ell}}{c_{\max}}\widetilde{Y}_{\ell}\right] \leq \frac{(1-\varepsilon)B}{c_{\max}}$ by feasibility of $\left\{X_{\ell S}^{(\varepsilon)}\right\}$. 
\end{proof}

\section{Additional numerical experiments on synthetic data}
\label{app:synthetic_experiments}
{To complement our real-world data experiments, we consider a synthetic dataset and show how the performance of our algorithm depends on the number of requests and the cardinality of the candidate set of lines, using the ILP as a benchmark}. 

Observe that our algorithm relies on the underlying road network solely through the candidate set of lines $\mathcal{L}$, the line costs $\{c_{\ell}\}$, and the passenger-line values $\{v_{\ell p}\}$. Thus, it suffices to directly generate these latter sets of inputs, rather than inheriting them from an underlying structured network.  We note that generating inputs in this manner, rather than running our algorithm on a synthetic network (e.g., a grid network), further underscores the strength and generalizability of our scheme, as its success is not tied to the geometry of any underlying graph.

\noindent\textbf{Line inputs.} We generate the candidate set of lines as follows. For each $\ell \in \mathcal{L}$, we associate $D_{\ell}$ edges, where $D_{\ell}\sim Unif\{5,50\}$. Moreover, let $c_{\ell} = 1 \, \forall \, \ell \in \mathcal{L}$. {This implies that a \platform{} with budget $B$ can open {at most} $B$ lines.} Let $\mathcal{F}$ denote the set of possible frequencies with which to operate each bus route. In our first set of experiments, we let $\mathcal{F} = \{1\}$. Doing so is without loss of generality since, by definition, bus routes operated at different frequencies are considered to be different lines. Thus, considering a larger set of frequencies is computationally equivalent to increasing the size of candidate set of lines (e.g., considering 1,000 lines with 2 different frequencies is equivalent to considering 2,000 lines with a single frequency). We set the bus capacity $C = 30$.

 \noindent\textbf{Passenger inputs.} For each passenger $p \in \mathcal{P}$ and line $\ell \in \mathcal{L}$, we let $r_{\ell p}$ be a random subset of contiguous edges of $r_{\ell}$. To model the fact that, in a realistic network, passengers would not be covered by all lines, we define random variable $Z_{\ell p}~\sim Ber(0.1)$ representing whether or not passenger $p$ is covered by line $\ell$. Given $Z_{\ell p}$, we define the passenger-line value as follows:
 $$v_{\ell p} = \begin{cases}
 Unif[0,1] &\quad \text{ if } Z_{\ell p} = 1 \\
 0 &\quad \text { otherwise.}
 \end{cases}
 $$

 \noindent\textbf{Performance metrics.} We investigate the performance of this practical procedure along three dimensions: $(i)$ the number of passengers $N$, $(ii)$ the size of the candidate set of lines $L$, and $(iii)$ the \platform's budget $B$. {For both the ILP and our algorithm, we set a strict time limit of 20 minutes, and compare the solutions returned by the two schemes at the end of the allotted time.} 

 Given an instance of line and passenger inputs, we run the procedure described in Section~\ref{procedure} $M = 500$ times for each combination of parameters $(L,N,B)$ (i.e., we find the maximum of the $m=10^4$ realized solutions $M=500$ times).

 We run the procedure for 5 randomly-generated instances of line and passenger inputs. Let $ALG$ denote the empirical average of the solution returned by the procedure. 
 As before, we compute $\eta$ the {empirical average of the} multiplicative gap between the solution returned by our procedure and the value of the configuration LP $\widehat{P}^{(\varepsilon)}$ at the end of the allotted time.

We report the results of our experiments in Table~\ref{tab:results}. 


Whereas in theory the ILP solver provides an upper bound on $ALG$, this does not necessarily hold in our numerical results. This is due to the fact that, for large-scale problems and under reduced time budgets (i.e., the real-time application we are interested in), the ILP solver cannot solve the problem to optimality, and as such the objective it achieves is not necessarily an upper bound on $ALG$ in practice.

\begin{table}[ht]
\centering
    \begin{tabular}{|ccc|ccc|}
    \hline
      $L$ & $N$ & $B$   & ILP &   $ALG$ & $\eta$ \\
      \hline \hline
      $1,000$ & $5,000$ & 20 &  {2363}   & 2173 & 0.81 \\
      $5,000$ & $5,000$ & 20  &  2098  &  \textbf{2171} & 0.81 \\
      $7,000$ & $5,000$ & 20  & 807  & \textbf{2173} & 0.80\\
      $10,000$ & $5,000$ & 20  &  ---  &  \textbf{2174} &  0.81 \\
     \hline
     $5,000$ & $5,000$ & 20  & 2098  & \textbf{2171} & 0.81  \\
     $5,000$ &  $10,000$ & 20  & ---  &\textbf{3498} & 0.84 \\
     $5,000$ & $15,000$ & 20  & ---  & \textbf{4445} & 0.88  \\
     \hline
      \end{tabular}
      \qquad
      \begin{tabular}{|ccc|ccc|}
      \hline
      $L$ & $N$ & $B$   & ILP &   $ALG$ & $\eta$ \\
      \hline \hline
      $1,000$ & $5,000$ & 40  & 3671  & 2744 & 0.88 \\
      $5,000$ & $5,000$ & 40 &  3686 &  2743 & 0.88\\
      $7,000$ & $5,000$ & 40  & 2750 & \textbf{2754} & 0.88\\
      $10,000$ & $5,000$ & 40 & ---  & \textbf{2748}  & 0.88\\
     \hline
     $5,000$ & $5,000$ & 40  &  3686 &  2743 & 0.88  \\
     $5,000$ & $10,000$ & 40  & --- & \textbf{4949} & 0.85   \\
     $5,000$ & $15,000$ & 40  &  --- & \textbf{6691} & 0.82 \\
       \hline
\end{tabular}
\caption{Numerical results for budgets $B \in \{20,40\}$. Bolded values of $ALG$ indicate that our procedure outperforms the ILP benchmark for the corresponding $L, N, B$. While the ILP outperforms our algorithm {on smaller instances}, for larger values of $L$ and $N$, our algorithm consistently outperforms the ILP. {As the budget increases from 20 to 40, the ILP outperforms our algorithm for a larger set of values of $L$ and $N$; however, there still exists a threshold past which our algorithm outperforms the ILP. This difference is especially stark when $L$ and $N$ are both very large (we note that it is reasonable to expect $L$ and $N$ to grow with $B$): for these large-scale settings, the ILP is incapable of returning any feasible solution in the allotted time.} {Observe moreover that $\eta$, the gap between the solution produced by our procedure and the value of the configuration LP, is consistently above $80\%$, which is a significant improvement upon the $0.95 \cdot (1-\frac1e)$ (i.e., $60\%$) theoretical guarantee.}}\label{tab:results}
\end{table}


\end{document}